\DeclareMathAlphabet{\mathpzc}{OT1}{pzc}{m}{it}
\begin{document}

\newtheorem{theorem}{\bf Theorem}[section]
\newtheorem{definition}{\bf Definition}[section]
\newtheorem{corollary}[theorem]{\bf Corollary}
\newtheorem{lemma}[theorem]{\bf Lemma}
\newtheorem{assumption}{Assumption}
\newtheorem{condition}{\bf Condition}[section]
\newtheorem{proposition}[theorem]{\bf Proposition}
\newtheorem{definitions}{\bf Definition}[section]
\newtheorem{problem}{\bf Problem}

\theoremstyle{remark}
\newtheorem{example}{\bf Example}[section]
\newtheorem{remark}{\bf Remark}[section]

\newcommand{ \Ss}{\mathbb{S}}
\newcommand{\beginsec}{
\setcounter{lemma}{0} \setcounter{theorem}{0}
\setcounter{corollary}{0} \setcounter{definition}{0}
\setcounter{example}{0} \setcounter{proposition}{0}
\setcounter{condition}{0} \setcounter{assumption}{0}
\setcounter{remark}{0} }

\numberwithin{equation}{section}
\numberwithin{theorem}{section}



\newcommand{\Erdos}{Erd\H{o}s-R\'enyi }
\newcommand{\Meleard}{M\'el\'eard }
\newcommand{\Frechet}{Fr\'echet }

\newcommand{\one} {{\boldsymbol{1}}}
\newcommand{\half}{{\frac{1}{2}}}
\newcommand{\quarter}{{\frac{1}{4}}}
\newcommand{\lfl}{\lfloor}
\newcommand{\rfl}{\rfloor}
\newcommand{\Rd}{{\Rmb^d}}
\newcommand{\intR}{\int_\Rmb}
\newcommand{\intRd}{\int_\Rd}
\newcommand{\eps}{\varepsilon}
\newcommand{\lan}{\langle}
\newcommand{\ran}{\rangle}
\newcommand{\ti}{\tilde}
\newcommand{\veps}{\varepsilon}
\newcommand{\skp}{\vspace{\baselineskip}}
\newcommand{\noi}{\noindent}

\newcommand{\Amb}{{\mathbb{A}}}
\newcommand{\Bmb}{{\mathbb{B}}}
\newcommand{\Cmb}{{\mathbb{C}}}
\newcommand{\Dmb}{{\mathbb{D}}}
\newcommand{\Emb}{{\mathbb{E}}}
\newcommand{\Fmb}{{\mathbb{F}}}
\newcommand{\Gmb}{{\mathbb{G}}}
\newcommand{\Hmb}{{\mathbb{H}}}
\newcommand{\Imb}{{\mathbb{I}}}
\newcommand{\Jmb}{{\mathbb{J}}}
\newcommand{\Kmb}{{\mathbb{K}}}
\newcommand{\Lmb}{{\mathbb{L}}}
\newcommand{\Mmb}{{\mathbb{M}}}
\newcommand{\Nmb}{{\mathbb{N}}}
\newcommand{\Omb}{{\mathbb{O}}}
\newcommand{\Pmb}{{\mathbb{P}}}
\newcommand{\Qmb}{{\mathbb{Q}}}
\newcommand{\Rmb}{{\mathbb{R}}}
\newcommand{\Smb}{{\mathbb{S}}}
\newcommand{\Tmb}{{\mathbb{T}}}
\newcommand{\Umb}{{\mathbb{U}}}
\newcommand{\Vmb}{{\mathbb{V}}}
\newcommand{\Wmb}{{\mathbb{W}}}
\newcommand{\Xmb}{{\mathbb{X}}}
\newcommand{\Ymb}{{\mathbb{Y}}}
\newcommand{\Zmb}{{\mathbb{Z}}}

\newcommand{\Amc}{{\mathcal{A}}}
\newcommand{\Bmc}{{\mathcal{B}}}
\newcommand{\Cmc}{{\mathcal{C}}}
\newcommand{\Dmc}{{\mathcal{D}}}
\newcommand{\Emc}{{\mathcal{E}}}
\newcommand{\Fmc}{{\mathcal{F}}}
\newcommand{\Gmc}{{\mathcal{G}}}
\newcommand{\Hmc}{{\mathcal{H}}}
\newcommand{\Imc}{{\mathcal{I}}}
\newcommand{\Jmc}{{\mathcal{J}}}
\newcommand{\Kmc}{{\mathcal{K}}}
\newcommand{\lmc}{{\mathcal{l}}}
\newcommand{\Lmc}{{\mathcal{L}}}
\newcommand{\Mmc}{{\mathcal{M}}}
\newcommand{\Nmc}{{\mathcal{N}}}
\newcommand{\Omc}{{\mathcal{O}}}
\newcommand{\Pmc}{{\mathcal{P}}}
\newcommand{\Qmc}{{\mathcal{Q}}}
\newcommand{\Rmc}{{\mathcal{R}}}
\newcommand{\Smc}{{\mathcal{S}}}
\newcommand{\Tmc}{{\mathcal{T}}}
\newcommand{\Umc}{{\mathcal{U}}}
\newcommand{\Vmc}{{\mathcal{V}}}
\newcommand{\Wmc}{{\mathcal{W}}}
\newcommand{\Xmc}{{\mathcal{X}}}
\newcommand{\Ymc}{{\mathcal{Y}}}
\newcommand{\Zmc}{{\mathcal{Z}}}

\newcommand{\Ebf}{{\mathbf{E}}}

\newcommand{\abd}{{\boldsymbol{a}}}
\newcommand{\Abd}{{\boldsymbol{A}}}
\newcommand{\bbd}{{\boldsymbol{b}}}
\newcommand{\Bbd}{{\boldsymbol{B}}}
\newcommand{\betabd}{{\boldsymbol{\beta}}}
\newcommand{\cbd}{{\boldsymbol{c}}}
\newcommand{\Cbd}{{\boldsymbol{C}}}
\newcommand{\dbd}{{\boldsymbol{d}}}
\newcommand{\Dbd}{{\boldsymbol{D}}}
\newcommand{\ebd}{{\boldsymbol{e}}}
\newcommand{\Ebd}{{\boldsymbol{E}}}
\newcommand{\etabd}{{\boldsymbol{\eta}}}
\newcommand{\fbd}{{\boldsymbol{f}}}
\newcommand{\Fbd}{{\boldsymbol{F}}}
\newcommand{\gbd}{{\boldsymbol{g}}}
\newcommand{\Gbd}{{\boldsymbol{G}}}
\newcommand{\hbd}{{\boldsymbol{h}}}
\newcommand{\Hbd}{{\boldsymbol{H}}}
\newcommand{\ibd}{{\boldsymbol{i}}}
\newcommand{\Ibd}{{\boldsymbol{I}}}
\newcommand{\jbd}{{\boldsymbol{j}}}
\newcommand{\Jbd}{{\boldsymbol{J}}}
\newcommand{\kbd}{{\boldsymbol{k}}}
\newcommand{\Kbd}{{\boldsymbol{K}}}
\newcommand{\lbd}{{\boldsymbol{l}}}
\newcommand{\Lbd}{{\boldsymbol{L}}}
\newcommand{\mbd}{{\boldsymbol{m}}}
\newcommand{\Mbd}{{\boldsymbol{M}}}
\newcommand{\mubd}{{\boldsymbol{\mu}}}
\newcommand{\nbd}{{\boldsymbol{n}}}
\newcommand{\Nbd}{{\boldsymbol{N}}}
\newcommand{\nubd}{{\boldsymbol{\nu}}}
\newcommand{\Nalpha}{{\boldsymbol{N_\alpha}}}
\newcommand{\Nbeta}{{\boldsymbol{N_\beta}}}
\newcommand{\Ngamma}{{\boldsymbol{N_\gamma}}}
\newcommand{\obd}{{\boldsymbol{o}}}
\newcommand{\Obd}{{\boldsymbol{O}}}
\newcommand{\pbd}{{\boldsymbol{p}}}
\newcommand{\Pbd}{{\boldsymbol{P}}}
\newcommand{\phibd}{{\boldsymbol{\phi}}}
\newcommand{\phihatbd}{{\boldsymbol{\hat{\phi}}}}
\newcommand{\psibd}{{\boldsymbol{\psi}}}
\newcommand{\psihatbd}{{\boldsymbol{\hat{\psi}}}}
\newcommand{\qbd}{{\boldsymbol{q}}}
\newcommand{\Qbd}{{\boldsymbol{Q}}}
\newcommand{\rbd}{{\boldsymbol{r}}}
\newcommand{\Rbd}{{\boldsymbol{R}}}
\newcommand{\rhobd}{{\boldsymbol{\rho}}}
\newcommand{\sbd}{{\boldsymbol{s}}}
\newcommand{\Sbd}{{\boldsymbol{S}}}
\newcommand{\sigmabd}{{\boldsymbol{\sigma}}}
\newcommand{\tbd}{{\boldsymbol{t}}}
\newcommand{\Tbd}{{\boldsymbol{T}}}
\newcommand{\taubd}{{\boldsymbol{\tau}}}
\newcommand{\ubd}{{\boldsymbol{u}}}
\newcommand{\Ubd}{{\boldsymbol{U}}}
\newcommand{\vbd}{{\boldsymbol{v}}}
\newcommand{\Vbd}{{\boldsymbol{V}}}
\newcommand{\varphibd}{{\boldsymbol{\varphi}}}
\newcommand{\wbd}{{\boldsymbol{w}}}
\newcommand{\Wbd}{{\boldsymbol{W}}}
\newcommand{\xbd}{{\boldsymbol{x}}}
\newcommand{\Xbd}{{\boldsymbol{X}}}
\newcommand{\ybd}{{\boldsymbol{y}}}
\newcommand{\Ybd}{{\boldsymbol{Y}}}
\newcommand{\zbd}{{\boldsymbol{z}}}
\newcommand{\Zbd}{{\boldsymbol{Z}}}

\newcommand{\abar}{{\bar{a}}}
\newcommand{\Abar}{{\bar{A}}}

\newcommand{\bbar}{{\bar{b}}}
\newcommand{\Bbar}{{\bar{B}}}
\newcommand{\cbar}{{\bar{c}}}
\newcommand{\Cbar}{{\bar{C}}}
\newcommand{\dbar}{{\bar{d}}}
\newcommand{\Dbar}{{\bar{D}}}
\newcommand{\ebar}{{\bar{e}}}
\newcommand{\Ebar}{{\bar{E}}}
\newcommand{\ebdbar}{{\bar{\ebd}}}
\newcommand{\Ebdbar}{{\bar{\Ebd}}}
\newcommand{\Ebfbar}{{\bar{\Ebf}}}
\newcommand{\Embbar}{{\bar{\Emb}}}
\newcommand{\fbar}{{\bar{f}}}
\newcommand{\Fbar}{{\bar{F}}}
\newcommand{\Fmcbar}{{\bar{\Fmc}}}
\newcommand{\gbar}{{\bar{g}}}
\newcommand{\Gbar}{{\bar{G}}}
\newcommand{\Gammabar}{{\bar{\Gamma}}}
\newcommand{\Gmcbar}{{\bar{\Gmc}}}
\newcommand{\Hbar}{{\bar{H}}}
\newcommand{\ibar}{{\bar{i}}}
\newcommand{\Ibar}{{\bar{I}}}
\newcommand{\jbar}{{\bar{j}}}
\newcommand{\Jbar}{{\bar{J}}}
\newcommand{\Jmcbar}{{\bar{\Jmc}}}
\newcommand{\kbar}{{\bar{k}}}
\newcommand{\Kbar}{{\bar{K}}}
\newcommand{\lbar}{{\bar{l}}}
\newcommand{\Lbar}{{\bar{L}}}
\newcommand{\mbar}{{\bar{m}}}
\newcommand{\Mbar}{{\bar{M}}}
\newcommand{\mubar}{{\bar{\mu}}}
\newcommand{\Mmbbar}{{\bar{\Mmb}}}
\newcommand{\nbar}{{\bar{n}}}
\newcommand{\Nbar}{{\bar{N}}}
\newcommand{\Nmcbar}{{\bar{\Nmc}}}
\newcommand{\nubar}{{\bar{\nu}}}
\newcommand{\obar}{{\bar{o}}}
\newcommand{\Obar}{{\bar{O}}}
\newcommand{\omegabar}{{\bar{\omega}}}
\newcommand{\Omegabar}{{\bar{\Omega}}}
\newcommand{\pbar}{{\bar{p}}}
\newcommand{\Pbar}{{\bar{P}}}
\newcommand{\Pbdbar}{{\bar{\Pbd}}}
\newcommand{\Phibar}{{\bar{\Phi}}}
\newcommand{\Pmbbar}{{\bar{\Pmb}}}
\newcommand{\Pmcbar}{{\bar{\Pmc}}}
\newcommand{\psibar}{{\bar{\psi}}}
\newcommand{\psibdbar}{{\bar{\psibd}}}
\newcommand{\qbar}{{\bar{q}}}
\newcommand{\Qbar}{{\bar{Q}}}
\newcommand{\rbar}{{\bar{r}}}
\newcommand{\Rbar}{{\bar{R}}}
\newcommand{\Rmcbar}{{\bar{\Rmc}}}
\newcommand{\sbar}{{\bar{s}}}
\newcommand{\Sbar}{{\bar{S}}}
\newcommand{\sigmabar}{{\bar{\sigma}}}
\newcommand{\Smcbar}{{\bar{\Smc}}}
\newcommand{\tbar}{{\bar{t}}}
\newcommand{\Tbar}{{\bar{T}}}
\newcommand{\taubar}{{\bar{\tau}}}
\newcommand{\Thetabar}{{\bar{\Theta}}}
\newcommand{\Tmcbar}{{\bar{\Tmc}}}
\newcommand{\ubar}{{\bar{u}}}
\newcommand{\Ubar}{{\bar{U}}}
\newcommand{\vbar}{{\bar{v}}}
\newcommand{\Vbar}{{\bar{V}}}
\newcommand{\wbar}{{\bar{w}}}
\newcommand{\Wbar}{{\bar{W}}}
\newcommand{\xbar}{{\bar{x}}}
\newcommand{\Xbar}{{\bar{X}}}
\newcommand{\Xbdbar}{{\bar{\Xbd}}}
\newcommand{\Xitbar}{{\bar{X}_t^i}}
\newcommand{\Xjtbar}{{\bar{X}_t^j}}
\newcommand{\Xktbar}{{\bar{X}_t^k}}
\newcommand{\Xisbar}{{\bar{X}_s^i}}
\newcommand{\ybar}{{\bar{y}}}
\newcommand{\Ybar}{{\bar{Y}}}
\newcommand{\zbar}{{\bar{z}}}
\newcommand{\Zbar}{{\bar{Z}}}

\newcommand{\Ahat}{{\hat{A}}}

\newcommand{\bhat}{{\hat{b}}}
\newcommand{\etahat}{{\hat{\eta}}}
\newcommand{\fhat}{{\hat{f}}}
\newcommand{\Fmchat}{{\hat{\Fmc}}}
\newcommand{\ghat}{{\hat{g}}}
\newcommand{\hhat}{{\hat{h}}}
\newcommand{\Jhat}{{\hat{J}}}
\newcommand{\Jmchat}{{\hat{\Jmc}}}
\newcommand{\lambdahat}{{\hat{\lambda}}}
\newcommand{\lhat}{{\hat{l}}}
\newcommand{\muhat}{{\hat{\mu}}}
\newcommand{\nuhat}{{\hat{\nu}}}
\newcommand{\phihat}{{\hat{\phi}}}
\newcommand{\psihat}{{\hat{\psi}}}
\newcommand{\rhohat}{{\hat{\rho}}}
\newcommand{\Smchat}{{\hat{\Smc}}}
\newcommand{\Tmchat}{{\hat{\Tmc}}}
\newcommand{\Yhat}{{\hat{Y}}}

\newcommand{\atil}{{\tilde{a}}}
\newcommand{\Atil}{{\tilde{A}}}
\newcommand{\Amctil}{{\tilde{\Amc}}}
\newcommand{\btil}{{\tilde{b}}}
\newcommand{\Btil}{{\tilde{B}}}
\newcommand{\bbdtil}{{\tilde{\bbd}}}
\newcommand{\betabdtil}{{\tilde{\betabd}}}
\newcommand{\ctil}{{\tilde{c}}}
\newcommand{\Ctil}{{\tilde{C}}}
\newcommand{\dtil}{{\tilde{d}}}
\newcommand{\Dtil}{{\tilde{D}}}
\newcommand{\etil}{{\tilde{e}}}
\newcommand{\Etil}{{\tilde{E}}}
\newcommand{\ebdtil}{{\tilde{\ebd}}}
\newcommand{\etatil}{{\tilde{\eta}}}
\newcommand{\Ebdtil}{{\tilde{\Ebd}}}
\newcommand{\Ebftil}{{\tilde{\Ebf}}}
\newcommand{\ftil}{{\tilde{f}}}
\newcommand{\Ftil}{{\tilde{F}}}
\newcommand{\Fmctil}{{\tilde{\Fmc}}}
\newcommand{\gtil}{{\tilde{g}}}
\newcommand{\Gtil}{{\tilde{G}}}
\newcommand{\gammatil}{{\tilde{\gamma}}}
\newcommand{\Gammatil}{{\tilde{\Gamma}}}
\newcommand{\Gmctil}{{\tilde{\Gmc}}}
\newcommand{\htil}{{\tilde{h}}}
\newcommand{\Htil}{{\tilde{H}}}
\newcommand{\itil}{{\tilde{i}}}
\newcommand{\Itil}{{\tilde{I}}}
\newcommand{\jtil}{{\tilde{j}}}
\newcommand{\Jtil}{{\tilde{J}}}
\newcommand{\Jmctil}{{\tilde{\Jmc}}}
\newcommand{\ktil}{{\tilde{k}}}
\newcommand{\Ktil}{{\tilde{K}}}
\newcommand{\ltil}{{\tilde{l}}}
\newcommand{\Ltil}{{\tilde{L}}}
\newcommand{\lambdatil}{{\tilde{\lambda}}}
\newcommand{\mtil}{{\tilde{m}}}
\newcommand{\Mtil}{{\tilde{M}}}
\newcommand{\mutil}{{\tilde{\mu}}}
\newcommand{\ntil}{{\tilde{n}}}
\newcommand{\Ntil}{{\tilde{N}}}
\newcommand{\nbdtil}{{\tilde{\nbd}}}
\newcommand{\nutil}{{\tilde{\nu}}}
\newcommand{\nubdtil}{{\tilde{\nubd}}}
\newcommand{\otil}{{\tilde{o}}}
\newcommand{\Otil}{{\tilde{O}}}
\newcommand{\omegatil}{{\tilde{\omega}}}
\newcommand{\Omegatil}{{\tilde{\Omega}}}
\newcommand{\ptil}{{\tilde{p}}}
\newcommand{\Ptil}{{\tilde{P}}}
\newcommand{\Pbdtil}{{\tilde{\Pbd}}}
\newcommand{\Pmbtil}{{\tilde{\Pmb}}}
\newcommand{\Pitil}{{\tilde{\Pi}}}
\newcommand{\psitil}{{\tilde{\psi}}}
\newcommand{\qtil}{{\tilde{q}}}
\newcommand{\Qtil}{{\tilde{Q}}}
\newcommand{\rtil}{{\tilde{r}}}
\newcommand{\Rtil}{{\tilde{R}}}
\newcommand{\rbdtil}{{\tilde{\rbd}}}
\newcommand{\rhotil}{{\tilde{\rho}}}
\newcommand{\rhobdtil}{{\tilde{\rhobd}}}
\newcommand{\Rmctil}{{\tilde{\Rmc}}}
\newcommand{\stil}{{\tilde{s}}}
\newcommand{\Stil}{{\tilde{S}}}
\newcommand{\sigmatil}{{\tilde{\sigma}}}
\newcommand{\sigmabdtil}{{\tilde{\sigmabd}}}
\newcommand{\ttil}{{\tilde{t}}}
\newcommand{\Ttil}{{\tilde{T}}}
\newcommand{\Tmctil}{{\tilde{\Tmc}}}
\newcommand{\thetatil}{{\tilde{\theta}}}
\newcommand{\Thetatil}{{\tilde{\Theta}}}
\newcommand{\util}{{\tilde{u}}}
\newcommand{\Util}{{\tilde{U}}}
\newcommand{\vtil}{{\tilde{v}}}
\newcommand{\Vtil}{{\tilde{V}}}
\newcommand{\Vmctil}{{\tilde{\Vmc}}}
\newcommand{\varphitil}{{\tilde{\varphi}}}
\newcommand{\varphibdtil}{{\tilde{\varphibd}}}
\newcommand{\wtil}{{\tilde{w}}}
\newcommand{\Wtil}{{\tilde{W}}}
\newcommand{\xtil}{{\tilde{x}}}
\newcommand{\Xtil}{{\tilde{X}}}
\newcommand{\xitil}{{\tilde{\xi}}}
\newcommand{\Xittil}{{\tilde{X}^{i}_t}}
\newcommand{\Xistil}{{\tilde{X}^{i}_s}}
\newcommand{\ytil}{{\tilde{y}}}
\newcommand{\Ytil}{{\tilde{Y}}}
\newcommand{\ztil}{{\tilde{z}}}
\newcommand{\Ztil}{{\tilde{Z}}}
\newcommand{\zetatil}{{\tilde{\zeta}}}
\newcommand{\Zittil}{{\tilde{Z}^{i,N}_t}}
\newcommand{\Zistil}{{\tilde{Z}^{i,N}_s}}

\newcommand{\zetainf}{{\underline{\zeta}}}
\newcommand{\zetasup}{{\|\zeta\|_\infty}}

\begin{frontmatter}
\title{Large Deviations for Brownian Particle Systems with Killing.\thanks{Research supported in part by the National Science Foundation (DMS-1305120), the Army Research Office (W911NF-14-1-0331) and Defense Advanced Research Projects Agency (W911NF-15-2-0122).}}
 \runtitle{LDP for Brownian Motions with Killing}

\begin{aug}
 \author{Amarjit Budhiraja \and Wai-Tong (Louis) Fan \and Ruoyu Wu}
\end{aug}

\skp

\today

\skp

\begin{abstract}
Particle approximations for certain nonlinear and nonlocal reaction-diffusion equations are studied using a system of Brownian motions with killing. 
The system is described by a collection of i.i.d.\ Brownian particles where each particle is killed independently at a rate determined by the empirical sub-probability measure of the states of the particles alive. 
A large deviation principle (LDP) for such sub-probability measure-valued processes is established. 
Along the way a convenient variational representation, which is of independent interest, for expectations of nonnegative functionals of Brownian motions together with an i.i.d.\ sequence of random variables  is established. 
Proof of the LDP relies on this variational representation and weak convergence arguments.

\noi {\bf AMS 2000 subject classifications:} Primary
60F10, 60K35; secondary 60B10, 93E20.

\noi {\bf Keywords:} Large Deviations, Weakly interacting particle systems, Brownian particles with killing, nonlinear reaction-diffusion equations, variational representations.
\end{abstract}

\end{frontmatter}


\section{Introduction}
\label{sec_kill:intro}

The goal of this work is to study a large deviation principle (LDP) for an interacting particle system associated with the nonlinear and nonlocal reaction-diffusion equation
\begin{equation}
	\label{eq_kill:PDE}
	\frac{\partial u(t,x)}{\partial t}=\frac{1}{2}\Delta u(t,x) - \lan\zeta,\,u\ran\,u(t,x),\; (t,x) \in (0,\infty)\times \Rd,
	\; \lim_{t\downarrow 0}u(t,\cdot) = \delta_0(\cdot),
\end{equation}
where $\Delta$ is the $d$-dimensional Laplacian operator, $\delta_0$ is the Dirac measure at $0$ and $\zeta \colon \Rd \to \Rmb_+$ is a continuous function with sub-quadratic growth, namely
\begin{equation}
\label{eq_kill:zeta}
	0 \le \zeta(x) \le C_\zeta (1 + \|x\|^p) \mbox{ for all} \, x \in \Rd
\end{equation}
for some $0 \le p < 2$ and $C_\zeta \in (0,\infty)$.
Here $\lan\zeta,\,u\ran$ denotes the integral $\int_{\Rd} \zeta(x) u(t,x) dx$.
Roughly speaking the system is described by $n$ independent Brownian particles where each particle is killed independently at a rate determined by the empirical measure of current particle states.  
More precisely, let $\{X_i\}_{i \ge 1}$ be a sequence of i.i.d.\ exponential random variables with rate $1$ and let $\{B_i(t), t \ge 0\}_{i \ge 1}$ be independent $d$-dimensional standard Brownian motions independent of $\{X_i\}_{i \ge 1}$. 
Define for $t \ge 0$ the random sub-probability measure $\mu^n(t)$ as the solution to the following equation
\begin{equation}
	\label{eq_kill:mu^n(t)_intro}
	\mu^n(t) = \frac{1}{n} \sum_{i=1}^n \delta_{B_i(t)} \one_{\left\{ X_i > \int_0^t \lan\zeta,\,\mu^n(s)\ran\,ds\right\}}.
\end{equation}
Since a.s., we can enumerate $\{X_i\}_{i=1}^n$ in a strictly increasing order, the unique solution of \eqref{eq_kill:mu^n(t)_intro} can be written explicitly in a recursive manner.
It can be checked (see Theorem \ref{thm_kill:LLN}) that $\mu^n \doteq \{\mu^n(t)\}_{t\in [0,T]}$ converges, in the Skorokhod path space $\Dmc$, in probability
to $\mu$ where for $t>0$, $\mu(t)$ has density $u(t, \cdot)$ given as the solution of \eqref{eq_kill:PDE}. 
Such particle systems are motivated by problems in biology, ecology, chemical kinetics, etc. For example, the simplest case 
where $\zeta \equiv 1$ corresponds to the case where the killing rate is proportional to the total number of particles alive and models a setting in which particles compete for a common resource. More general functions $\zeta$
are of interest as well and one interpretation of $\zeta(x)$ is the amount of resource consumed by  a particle in state $x$ (see for instance \cite{Freitas99} and Chapter 9 of \cite{Volpert14}).
Similar particle systems arise in problems of mathematical finance as models for self exciting correlated defaults \cite{CvMaZh}. 

In this work
we are interested in a Large Deviation Principle for the stochastic process $\mu^n$ in $\Dmc$.
A LDP, in addition to giving precise estimates on exponential decay rates of probabilities of deviations of the stochastic system from its law of large number limit, is  useful for developing accelerated Monte-Carlo methods for rare event simulation (see eg. \cite{DuWa1,DuSpZh}). Large deviation results for particle systems associated with nonlinear reaction-diffusion equations have been studied in many works\cite{Jona1993large,Bodineau2012large,Landim2015hydrostatics}.
However, these settings are very different from ours.
Specifically, in these works the models are described through jump-Markov processes on a discrete lattice with local interaction rules, the reaction operators in the PDE are local, and proof ideas are quite different. The setting considered in the current work is closer to that of particle systems with a weak interaction. 
Large deviation results for weakly interacting diffusions  go back to the classical work of Dawson and G{\"a}rtner \cite{DawsonGartner1987large} (see also Chapter 13 of \cite{FengKurtz06}) who studied uniformly nondegenerate diffusions that interact through a drift term that depends on the empirical measure of the particle states. In a recent work \cite{BudhirajaDupuisFischer2012}, using certain variational representations from \cite{BoueDupuis1998variational} for exponential functionals of Brownian motions,  a different proof of the LDP in \cite{DawsonGartner1987large} was provided. The work in \cite{BudhirajaDupuisFischer2012} in particular relaxed the assumption of uniform nondegeneracy and allowed mean field interaction in both drift and diffusion coefficients. Unlike \cite{DawsonGartner1987large}, proofs in
\cite{BudhirajaDupuisFischer2012} do not involve any space-time discretization or exponential probability estimates but rather rely on certain stochastic control representations and weak convergence arguments. A similar approach is taken in the current work although
the  interaction here is  of a   different form.  
The representation in \cite{BoueDupuis1998variational} is not well suited for our setting, since our particle system depends not only on the Brownian motions $\{B_i\}$ but also on the i.i.d. sequence  $\{X_i\}$. To address this, 
we begin by establishing a new variational representation (Proposition \ref{prop_kill:rep_general})
that allows the functional to depend, in addition to the Brownian motions, on an extra collection of i.i.d. random variables.
This representation plays a central role in the proof of the LDP and we believe that it is of independent interest and has potential applications for large deviation analysis of other interacting particle models (see Remark \ref{rem:rem921} for one such application).  
The starting point for the proof of the representation is the Bou\'{e}-Dupuis\cite{BoueDupuis1998variational} variational formula and the classical Donsker-Varadhan representation for exponential integrals. The key step in the proof is the interchange of a certain `integration' and `infimum' operation (see discussion above Proposition \ref{prop_kill:rep_general}) for which we need a careful measurable selection and construction of suitable controls (see Section \ref{sec:pfprop4.3}). Using the Laplace formulation of the LDP (see Chapter 1 of \cite{DupuisEllis2011weak}) the variational representation in Proposition \ref{prop_kill:rep_general} reduces the proof of the upper bound in the LDP to proving tightness and characterization of limit points of certain controlled empirical measures.
The proof of the lower bound relies on construction of asymptotically near optimal controlled stochastic processes with killing, together with a uniqueness argument for limit point that crucially uses the form of the killing function (see Lemma \ref{lem_kill:uniqueness}).

Although not explored here, we expect that similar techniques will be useful for settings where particle dynamics are given through general jump-diffusions and the killing function $\zeta$ is state dependent, i.e.
the integral on the right side of \eqref{eq_kill:mu^n(t)_intro} is replaced with $\int_0^t \lan\zeta_{B_i(s)},\,\mu^n(s)\ran\,ds$ where $\zeta(x,y)=\zeta_x(y)$ is a suitable $\Rmb_+$ valued function on
$\Rmb^d\times \Rmb^d$.  We note that $\mu^n$ can be written as a measurable function of the empirical measure of an i.i.d. collection. More precisely, denoting $\Gamma^n \doteq \frac{1}{n} \sum_{i=1}^n \delta_{(B_i,X_i)}$, we can represent $\mu^n = h(\Gamma^n)$ for a  measurable map from
$\Pmc(\Cmc([0,T]: \Rmb^d)\times [0,\infty))$ to $\Dmb([0,T]: \Mmc(\Rmb^d))$ (see notation in Section \ref{sec_kill:model}). Sanov's theorem gives a LDP for $\Gamma^n$, however the function $h$ is not continuous due to which one cannot apply the contraction principle to deduce a LDP for $\mu^n$.  Another approach that is common in the study of large deviations for weakly interacting particle systems is to consider a convenient absolutely continuous change of measure under which the large deviation analysis is more tractable and then deduce the LDP for the original collection by obtaining suitable estimates on the Radon-Nikodym derivative. In the current setting one way to implement such an approach 
would be to first appeal to  Sanov's theorem for a LDP for  $\Gammatil^n \doteq \frac{1}{n} \sum_{i=1}^n \delta_{(B_i,\sigmatil_i)}$, where $\sigmatil_i \doteq \inf \{ t : X_i \le \int_0^t \lan \zeta,\, \mu(s) \ran \,ds \}$ for $i = 1,\dotsc,n$ and $\mu$ as before is the law of large number limit of $\mu^n$, and
then establish a LDP for $\Gammabar^n \doteq \frac{1}{n} \sum_{i=1}^n \delta_{(B_i,\sigmabar_i^n)}$, where $\sigmabar_i^n \doteq \inf \{ t : X_i \le \int_0^t \lan \zeta,\, \mu^n(s) \ran \,ds \}$ for $i = 1,\dotsc,n$, by estimating the Radon-Nikodym derivative
$\frac{dQ^n}{dP^n}$ where $Q^n$ and $P^n$ are the probability laws of $\Gammatil^n$ and $\Gammabar^n$ respectively.
However  obtaining the required estimates on $\frac{dQ^n}{dP^n}$ for carrying out this approach does not appear to be straightforward.

Finally, we remark that both the LDP result and the law of large numbers result are with respect to an ``annealed measure" $\Pbd$ (defined in Section \ref{sec_kill:model}) under which the randomness of $\{X_i\}_{i \in \Nmb}$ is averaged out. Although not pursued here, it is also of interest to consider a ``quenched" version of the LDP, in analogy to results for random walks in random environments and for random polymer models; see for instance \cite{CGZ00} and \cite{EJ15} respectively. 

The paper is organized as follows.
Section~\ref{sec_kill:model} introduces the basic model and notation.
In this section we also present a law of large numbers  (Theorem \ref{thm_kill:LLN}) for $\mu^n$.
Section~\ref{sec_kill:LDP} gives the main large deviations result of this work, namely Theorem~\ref{thm_kill:LDP}.  
Rest of the work is devoted to the proof of Theorem~\ref{thm_kill:LDP}.
We begin in Section~\ref{sec_kill:var_rep_general} by presenting a variational representation for functionals of independent Brownian motions and an i.i.d.\ sequence of random variables. Proof of this representation is given in Section \ref{sec:pfprop4.3}.
Tightness of controls and related processes in this representation is argued in Section~\ref{sec_kill:tightness_control}.
Using the representation and the tightness results from Section~\ref{sec_kill:tightness_control}, Section~\ref{sec_kill:Lapuppbd} proves the Laplace principle upper bound for the LDP in Theorem~\ref{thm_kill:LDP} while Section~\ref{sec_kill:Laplowbd} gives the proof of the lower bound.  
Compactness of sub-level sets of the candidate rate function is established in Section~\ref{sec_kill:rate_function_pf}.
The large deviation principle for $\mu^n$ is immediate on combining results of Sections~\ref{sec_kill:Lapuppbd},~\ref{sec_kill:Laplowbd} and~\ref{sec_kill:rate_function_pf}.
Finally Appendix \ref{sec_kill:append} provides the proof of Theorem \ref{thm_kill:LLN}.

\section{Model and notation}
\label{sec_kill:model}

The following notation will be used.
For a Polish space $(\Smb,d(\cdot,\cdot))$, denote the corresponding Borel $\sigma$-field by $\Bmc(\Smb)$.
For a signed measure $\nu$ on $\Smb$ and $\nu$-integrable function $f \colon \Smb \to \Rd$, we will use $\langle \nu,f \rangle$ and $\langle f, \nu \rangle$ interchangeably for $\int f \, d\nu$.
Let $\Pmc(\Smb)$ [resp.\ $\Mmc(\Smb)$] be the space of probability measures [resp.\ sub-probability measures] on $\Smb$, equipped with the topology of weak convergence.
A convenient metric for this topology is the bounded-Lipschitz metric $d_{BL}$, which metrizes $\Pmc(\Smb)$ [resp.\ $\Mmc(\Smb)$] as a Polish space, defined as
\begin{equation*}
d_{BL}(\nu_1,\nu_2) \doteq \sup_{\|f\|_{BL} \le 1} | \langle \nu_1 - \nu_2, f \rangle |, \quad \nu_1, \nu_2 \in \Pmc(\Smb) \mbox{ [resp.\ $\Mmc(\Smb)$]}.
\end{equation*}
Here $\|\cdot\|_{BL}$ is the bounded Lipschitz norm, i.e.\ for $f \colon \Smb \to \Rmb$,
\begin{equation*}
	\label{eq:notation}
	\|f\|_{BL} \doteq \max \{\|f\|_\infty, \|f\|_L\}, \quad \|f\|_\infty \doteq \sup_{x \in \Smb} |f(x)|, \quad \|f\|_L \doteq \sup_{x \ne y} \frac{|f(x)-f(y)|}{d(x,y)}.
\end{equation*}
For $\nu_1, \nu_2 \in \Pmc(\Smb)$, we write $\nu_1 \ll \nu_2$ if $\nu_1$ is absolutely continuous with respect to $\nu_2$.
Denote by $\Mmb_b(\Smb)$ [resp.\ $\Cmb_b(\Smb)$] the space of real bounded $\Bmc(\Smb)/\Bmc(\Rmb)$-measurable functions [resp.\ real bounded and continuous functions].
Let $\Cmb_b^k(\Rmb^d)$ be the space of functions on $\Rmb^d$, which have continuous and bounded derivatives up to the $k$-th order.

Fix $T < \infty$. 
All stochastic processes will be considered over the time horizon $[0,T]$. 
We will use the notations $\{X_t\}$ and $\{X(t)\}$ interchangeably for stochastic processes.
For a Polish space $\Smb$, denote by $\Cmb([0,T]:\Smb)$ [resp.\ $\Dmb([0,T]:\Smb)$] the space of continuous functions [resp.\ right continuous functions with left limits] from $[0,T]$ to $\Smb$, endowed with the uniform [resp.\ Skorokhod] topology. 
Probability law of an $\Smb$-valued random variable $X$ will be denoted as $\Lmc(X)$. 
We will use $\Ebf^\Pbd$ for expected value under some probability law $\Pbd$ but when clear from the context $\Pbd$ will be suppressed from the notation.
We say a collection $\{ X_n \}$ of $\Smb$-valued random variables is tight if $\{ \Lmc(X_n) \}$ are tight in $\Pmc(\Smb)$.
Convergence of a sequence $\{X_n\}$ of $\Smb$-valued random variables in distribution to $X$ will be written as $X_n \Rightarrow X$. 

We will usually denote by $\kappa, \kappa_1, \kappa_2, \dotsc$, the constants that appear in various estimates within a proof. 
The value of these constants may change from one proof to another.
Let $\Cmc \doteq \Cmb([0,T]: \Rmb^d)$, $\Smc \doteq \Cmc \times \Rmb_+$ and $\Dmc \doteq \Dmb([0,T]: \Mmc(\Rd))$.
Denote by $\Nmb_0$ the set of nonnegative integers.

Let $\{B_i\}_{i \in \Nmb}$ be a sequence of i.i.d.\ $d$-dimensional standard Brownian motions, and $\{X_i\}_{i \in \Nmb}$ be a sequence of i.i.d.\ mean $1$ exponential random variables independent of $\{B_i\}_{i \in \Nmb}$, defined on $(\Omega, \Fmc, \Pbd)$.
Let $\Fmc_t$ be the $\Pbd$-completion of the $\sigma$-field generated by $\{B_i(s), X_i, s \le t, i \in \Nmb\}$.
Fix a continuous function $\zeta: \mathbb{R}^d \to \mathbb{R}_+$ satisfying \eqref{eq_kill:zeta}.
The main result of this work is a large deviation principle (LDP) for $\mu^n \doteq \{\mu^n(t)\}_{t \in [0,T]}$ in $\Dmc$, where $\mu^n(t)$ is defined in \eqref{eq_kill:mu^n(t)_intro}.
We begin with the following law of large numbers result.

Let $\theta \doteq \Lmc(X_1)$ be the exponential distribution (with rate $1$). Let $\mu_0 \doteq \Lmc(B_1)$ be the Wiener measure on $\Cmc$, $\mu_{0,t} \doteq \Lmc(B_1(t))$ be the marginal distribution of $\mu_0$ at time instant $t$, and $b(t) \doteq \langle \zeta , \, \mu_{0,t}\rangle$. 
Let $a(t)$ be the unique solution of the ordinary differential equation (ODE)
\begin{equation}
	\label{eq_kill:ODE}
	\dot{a}(t) = -a^2(t) b(t), \; a(0) = 1,
\end{equation}
namely $a(t) = \frac{1}{1+\int_0^t b(s)\,ds}$.
Note that from \eqref{eq_kill:zeta} it follows that
\begin{equation}
\label{eq_kill:zeta_2}
	\zeta(x) \le \Ctil_\zeta(1 + \|x\|^2), \; x \in \Rd
\end{equation} 
for some $\Ctil_\zeta \in (0,\infty)$.
In particular $b(t)$ and $a(t)$ are well-defined.
Define $\mu \colon [0, T] \to \Mmc(\Rmb^d)$ as $\mu(t) \doteq a(t)\mu_{0,t}$.
Clearly, $\mu \in \Cmb([0,T]: \Mmc(\Rmb^d))$ and for positive $t$, $\mu(t)$ is absolutely continuous with respect to Lebesgue measure.
Writing for $t \in (0,T]$, $\mu_t(dx)=u(t,x)dx$, it is easily checked that $u(t,x)$ is the solution to \eqref{eq_kill:PDE}.
Furthermore we have the following law of large numbers whose
proof is provided in Appendix~\ref{sec_kill:append}.

\begin{theorem} 
	\label{thm_kill:LLN}
	As $n \to \infty$, $\mu^n$ converges to $\mu$ in probability in $\Dmc$.
\end{theorem}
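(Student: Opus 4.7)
The plan is to couple $\mu^n$ with a ``decoupled'' reference system whose killing times are driven by the deterministic cumulative rate $R(t) := \int_0^t \langle \zeta,\mu(s)\rangle\,ds = -\ln a(t)$, and then to close the loop by controlling the random cumulative rate $R^n(t) := \int_0^t \langle \zeta,\mu^n(s)\rangle\,ds$ via a Gronwall argument.

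\emph{Step 1: LLN for the reference system.} Set $\mutil^n(t) := \frac{1}{n}\sum_{i=1}^n \delta_{B_i(t)}\one_{\{X_i > R(t)\}}$ and $\Rtil^n(t) := \int_0^t \langle \zeta,\mutil^n(s)\rangle\,ds$. Since the summands are i.i.d.\ $\Mmc(\Rd)$-valued processes with mean $\mu(\cdot)$, and since \eqref{eq_kill:zeta_2} together with all-order Brownian moment bounds gives $\Ebf[\sup_{s\le T}\zeta(B_1(s))^q]<\infty$ for every $q$, a standard functional LLN (Glivenko-Cantelli applied to a countable $d_{BL}$-convergence-determining family of bounded Lipschitz test functions, combined with a Brownian modulus-of-continuity estimate for equicontinuity in $t$) yields $\sup_{t\le T} d_{BL}(\mutil^n(t),\mu(t)) \to 0$ and hence $\|\Rtil^n - R\|_\infty \to 0$ in probability.

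\emph{Step 2: Gronwall for $R^n - R$.} Write
\begin{equation*}
R^n(t) - \Rtil^n(t) = \int_0^t \frac{1}{n}\sum_{i=1}^n \zeta(B_i(s))\bigl[\one_{\{X_i > R^n(s)\}} - \one_{\{X_i > R(s)\}}\bigr]\,ds,
\end{equation*}
and bound the integrand in absolute value by $G_n(s, R^n(s)\wedge R(s), R^n(s)\vee R(s))$, where $G_n(s,a,b):=\tfrac{1}{n}\sum_i \zeta(B_i(s))\one_{\{a<X_i\le b\}}$. A uniform Glivenko-Cantelli estimate over the class of weighted indicators indexed by $(s,a,b)$, whose envelope $\sup_{s\le T}\zeta(B_1(s))$ lies in every $L^q$, gives $\sup_{s,\,0\le a\le b}|G_n(s,a,b)-b(s)(e^{-a}-e^{-b})|\to 0$ in probability. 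Combined with $e^{-a}-e^{-b}\le b-a$ and $\|b\|_\infty<\infty$, this produces $G_n(s,\cdot,\cdot)\le \|b\|_\infty|R^n(s)-R(s)|+\eta_n$ with $\eta_n\to 0$ in probability. Writing $\epsilon^n(t):=|R^n(t)-R(t)|$ and combining with Step 1,
\begin{equation*}
\epsilon^n(t) \le \int_0^t \|b\|_\infty\,\epsilon^n(s)\,ds + T\eta_n + \|\Rtil^n - R\|_\infty,
\end{equation*}
so Gronwall's lemma yields $\|\epsilon^n\|_\infty \to 0$ in probability.

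\emph{Step 3: Closing the argument.} For any $f$ with $\|f\|_{BL}\le 1$,
$|\langle f,\mu^n(t)-\mutil^n(t)\rangle|\le \tfrac{1}{n}|\{i:X_i\in[R^n(t)\wedge R(t),R^n(t)\vee R(t)]\}|$,
which vanishes uniformly in $t$ in probability by Glivenko-Cantelli for the empirical CDF of the $X_i$'s combined with $\|\epsilon^n\|_\infty\to 0$. Together with Step 1 this gives $\sup_{t\le T} d_{BL}(\mu^n(t),\mu(t))\to 0$ in probability, which upgrades to convergence in $\Dmc$ because the limit $\mu$ is continuous. The principal technical point is the uniform Glivenko-Cantelli estimate in Step 2: replacing the $L^2$ envelope by the $L^\infty$ envelope $\max_i\sup_{s\le T}\zeta(B_i(s))$ (which has size $(\log n)^{p/2}$) would blow up the Gronwall constant with $n$, so exploiting the empirical-process structure of the weighted indicator class rather than a pathwise maximum bound is essential.
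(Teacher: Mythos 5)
Your proof is correct in outline but takes a genuinely different route from the paper's. The paper recycles the LDP machinery developed in Section~5: it applies Lemmas~\ref{lem_kill:tightness_1}--\ref{lem_kill:char_limit_2} with the trivial control ($\psibd_i^n\equiv 0$, $\Pi^n=\theta^{\otimes n}$, $\Sbd_i^n=X_i$), obtains tightness of $(Q^n,\mu^n)$, identifies any weak limit point $\mutil^*$ as $\varpi_{Q^*}$ with $Q^*_{(1,4)}=\mu_0\otimes\theta$ a.s., checks directly that $\mu$ itself satisfies equation \eqref{eq_kill:varpi_Theta}, and concludes from the uniqueness Lemma~\ref{lem_kill:uniqueness} that $\mutil^*=\mu$. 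You instead couple $\mu^n$ to the genuinely i.i.d.\ reference system $\mutil^n$ driven by the deterministic cumulative rate $R(t)=-\log a(t)$, prove a uniform LLN for $\mutil^n$, and close the loop with a Gronwall bound on $\|R^n-R\|_\infty$ through a uniform Glivenko--Cantelli estimate for the weighted-indicator class. Both routes succeed; yours is the classical Sznitman-style propagation-of-chaos coupling and is quantitative in principle (a rate of convergence falls out of Gronwall), whereas the paper's ``soft'' compactness-plus-uniqueness argument is essentially free once the LDP infrastructure exists and needs no empirical-process theory.

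Two points in your sketch should be made explicit. In Step~1, equicontinuity in $t$ of $\mutil^n$ is not controlled by the Brownian modulus of continuity alone: $\mutil^n$ jumps by $1/n$ at each killing time $R^{-1}(X_i)$, so you also need a Glivenko--Cantelli bound for the empirical distribution of the $X_i$'s to show that the mass lost over any short time window is small uniformly in $t$ --- you invoke exactly this in Step~3, so it is a routine fix, but as stated the Brownian-modulus argument does not cover the jump contribution. In Step~2, the uniform GC bound over $(s,a,b)$ is, as you correctly flag, the technical crux: the class of functions $(B,X)\mapsto \zeta(B(s))\one_{\{a<X\le b\}}$ is indexed by a \emph{continuous} parameter $s$ and carries an unbounded (though $L^q$ for all $q$) envelope, so a bare VC citation does not suffice; one needs a truncation-plus-bracketing argument in $s$ that exploits uniform continuity of $\zeta\circ B_i$ on the high-probability compact event $\{\sup_{s\le T}\|B_i(s)\|\le M\}$ together with the envelope moment bound on its complement. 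Neither of these is a fatal gap, but both must be spelled out for a complete proof.
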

By a more careful analysis, for the proof of Theorem \ref{thm_kill:LLN}, the assumption that $\zeta$ has sub-quadratic growth can be weakened to allow for quadratic growth.  However in the proof of the large deviation result that we present in the next section, the condition on sub-quadratic growth of $\zeta$ is used in an important way (see proof of Lemma \ref{lem_kill:char_limit_2}) and thus we make this assumption throughout.



\section{Large deviation principle}
\label{sec_kill:LDP}

In this section we present our main large deviations result. We begin by introducing some canonical spaces and processes that will be convenient in the analysis.


\subsection{Canonical space and processes}
\label{sec_kill:canonical}

Let $\Rmc^W$ be the space of finite Borel measures $r$ on $\Rmb^d \times [0,T]$ such that $r(\Rmb^d\times [0,t]) = t$ for all $t \in [0,T]$ and
\begin{equation*}
	\int_{\Rmb^d\times[0,T]} \|y\| \, r(dy\,dt) < \infty.
\end{equation*}
Such a measure can be disintegrated as $r(dy\, dt) = r_t(dy) \,dt$, where $t \mapsto r_t(\cdot)$ is a measurable map from $[0,T]$ to $\Pmc(\Rmb^d)$.
We equip $\Rmc^W$ with the topology of weak convergence plus convergence of first moments.
This topology can be metrized with the Wasserstein distance of order $1$ and the space with this metric is Polish (cf.~\cite[Section 6.3]{Rachev1991probability}).

We now introduce the \textit{canonical space}
	$\Xi \doteq \Cmc \times \Cmc \times \Rmc^W \times \Rmb_+$
and \textit{canonical variables} on $(\Xi , \Bmc(\Xi))$:  
For $\xi = (\btil, b, \rho, \sigmatil) \in \Xi$,
\begin{equation}
	\bbdtil(\xi) \doteq \btil ,\; \bbd(\xi) \doteq b, \; \rhobd(\xi) \doteq \rho, \; \sigmabdtil(\xi) \doteq \sigmatil. \label{eq:eq752}
\end{equation}
Let $\{\Gmc_t\}_{0 \le t \le T}$ be the canonical filtration on $(\Xi, \Bmc(\Xi))$, namely
\begin{equation*}
	\Gmc_t \doteq \sigma \{ \bbdtil_s, \bbd_s, \rhobd(A \times [0,s]), s \le t, A \in \Bmc(\Rmb^d), \sigmabdtil\}, \; t \in [0,T].
\end{equation*}
For $\Theta \in \Pmc(\Xi)$, denote by $\Theta_{(i)}$ the marginal of $\Theta$ on the $i-$th coordinate, $i=1,2,3,4$.
For example $\Theta_{(1)}(A) \doteq \Theta(A \times \Cmc \times \Rmc^W \times \Rmb_+)$ for all $A \in \Bmc(\Cmc)$.

\subsection{Rate function and statement of the LDP}
\label{sec_kill:rate function LDP}

Let $\Pmc_\infty$ denote the collection of all $\Theta \in \Pmc(\Xi)$ such that $\Theta_{(4)} \ll \theta$ and under $\Theta$ the following hold:

	\noi (1) $\bbd$ is a $d$-dimensional standard $\Gmc_t$-Brownian motion.
	
	\noi (2) $\int_{\Rmb^d \times [0,T]} \|y\|^2 \,\rhobd (dy \, dt) < \infty$, a.s.
	
	\noi (3) $\bbdtil_t = \bbd_t + \int_{\Rmb^d \times [0,t]} y\, \rhobd (dy \, ds)$, for all $t \in [0,T]$, a.s.
	

\begin{remark}
	\label{rmk_kill:Pmc_infty}
	Note that the property $\Theta_{(4)} \ll \theta$ above implies that for $\Theta \in \Pmc_\infty$, $\Theta_{(4)}$ is absolutely continuous with respect to the Lebesgue measure on $\Rmb_+$.
	This observation will be used in the proof of Lemmas~\ref{lem_kill:uniqueness},~\ref{lem_kill:tightness_2} and~\ref{lem_kill:char_limit_2}.
\end{remark}

For $\Theta \in \Pmc(\Xi)$ let
\begin{equation*}
	\Jmc(\Theta) \doteq \Ebf^{\Theta} \left[ \frac{1}{2} \int_{\Rd \times [0,T]} \|y\|^2 \, \rhobd(dy \, dt) \right] + R(\Theta_{(4)} \| \theta),
\end{equation*}
where $R(\gamma\|\theta)$ for $\gamma \in \Pmc(\Rd)$ is the relative entropy of $\gamma$ with respect to $\theta$ defined as
\begin{align*}
	R(\gamma \| \theta) \doteq 
	\begin{cases}
		\int_{\Rd} \left( \log \frac{d\gamma}{d\theta} \right) d\gamma, & \gamma \ll \theta,  \\
	 	\hfil \infty, & \text{ otherwise.}	\\
	\end{cases}
\end{align*}
Note that with $\Theta_0$ the probability law of $(B_1, B_1, \delta_{0}(dy)\otimes dt, X_1)$, where $B_1$ is a standard $d$-dimensional Brownian motion and $X_1$ is a mean one exponential random variable independent of $B_1$, $\Jmc(\Theta_0) =0$.

For $\Theta \in \Pmc_\infty$ consider the equation, whose unknown is $\varpi_\Theta \colon [0,T] \to \Mmc(\Rd)$,
\begin{equation}
	\label{eq_kill:varpi_Theta}
	\lan f,\,\varpi_\Theta(t) \ran = \Ebf^\Theta \left[ f(\bbdtil_t) \one_{\left\{\sigmabdtil > \int_0^t \lan\zeta,\,\varpi_\Theta(s)\ran\,ds\right\}} \right], \mbox{ for all } f \in \Cmb_b(\Rd), t \in [0,T].
\end{equation}
Note that if a $\varpi_\Theta \colon [0,T] \to \Mmc(\Rd)$ satisfies the above equation for all continuous and bounded $f$
then it also satisfies the equality for all bounded measurable $f$ since
given a bounded function $f: \Rd \to \Rmb$ we can find  functions $f_n \in \Cmb_b(\Rd)$ such that $f_n(x) \to f(x)$ for a.e. 
$x \in \Rd$ and $\{f_n\}$ is uniformly bounded.
By monotone convergence theorem we then see that the above equality holds for all non-negative  measurable functions from
$\Rd$ to $\Rmb$ and finally the equality also holds for all measurable
  $f: \Rd \to \Rmb$ that satisfy 
$\Ebf^\Theta  |f(\bbdtil_t)|  <\infty$.

The following lemma shows that if $\Jmc(\Theta)<\infty$ there is at most one solution to \eqref{eq_kill:varpi_Theta}.

\begin{lemma}
	\label{lem_kill:uniqueness}
	Suppose $\Theta \in \Pmc_\infty$ satisfies $\Jmc(\Theta) < \infty$.
	Then there is at most one measurable map $\varpi_\Theta:[0,T] \to \Mmc(\Rd)$ that satisfies
	 \eqref{eq_kill:varpi_Theta}. 
	Moreover such a solution must necessarily be in $\Cmb([0,T]:\Mmc(\Rd))$.
\end{lemma}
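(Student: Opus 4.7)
Suppose $\varpi^1, \varpi^2$ are two measurable maps from $[0,T]$ to $\Mmc(\Rd)$ satisfying \eqref{eq_kill:varpi_Theta}, and set $\phi_i(t) \doteq \int_0^t \lan \zeta,\,\varpi^i(s)\ran\,ds$. The plan is to derive a one-dimensional fixed-point equation for $\phi_i$ and exploit the monotonicity structure of the killing mechanism.

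First I would reduce to a scalar equation. Because $\Jmc(\Theta) < \infty$, we have $\Ebf^\Theta \int_{\Rd\times[0,T]} \|y\|^2\,\rhobd(dy\,dt) < \infty$, and combined with $\bbdtil_t = \bbd_t + \int_{\Rd\times[0,t]} y\,\rhobd(dy\,ds)$ and Cauchy--Schwarz one gets $\sup_{t\le T} \Ebf^\Theta \|\bbdtil_t\|^2 < \infty$. The sub-quadratic bound \eqref{eq_kill:zeta_2} then yields $\sup_{t\le T} \Ebf^\Theta \zeta(\bbdtil_t) < \infty$, which (as the excerpt already observes) allows one to plug $f=\zeta$ into \eqref{eq_kill:varpi_Theta}. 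Defining
\begin{equation*}
  G(s,a) \doteq \Ebf^\Theta\bigl[\zeta(\bbdtil_s)\,\one_{\{\sigmabdtil > a\}}\bigr],\quad s\in[0,T],\; a\ge 0,
\end{equation*}
the equation becomes $\phi_i(t) = \int_0^t G(s,\phi_i(s))\,ds$, with $\phi_i(0)=0$ and $\phi_i$ absolutely continuous.

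The key observation is that $a\mapsto G(s,a)$ is nonincreasing. Consequently, for any two solutions $\phi_1,\phi_2$,
\begin{equation*}
  \tfrac{d}{dt}(\phi_1(t)-\phi_2(t))^2 = 2(\phi_1(t)-\phi_2(t))\bigl(G(t,\phi_1(t))-G(t,\phi_2(t))\bigr) \le 0
\end{equation*}
almost everywhere, and $(\phi_1-\phi_2)^2$ vanishes at $0$, forcing $\phi_1\equiv\phi_2$. Plugging the common $\phi$ back into \eqref{eq_kill:varpi_Theta} recovers $\varpi^1=\varpi^2$ on all of $\Cmb_b(\Rd)$ and hence as sub-probability measures.

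For the continuity claim, fix $f\in\Cmb_b(\Rd)$ and $t_n\to t_0$ in $[0,T]$. Since $\bbdtil\in\Cmc$, $f(\bbdtil_{t_n})\to f(\bbdtil_{t_0})$ pointwise. Since $\phi$ is continuous (being an integral of the bounded function $s\mapsto G(s,\phi(s))$) and $\Theta_{(4)}\ll\theta$ is atomless (Remark \ref{rmk_kill:Pmc_infty}), $\Theta(\sigmabdtil=\phi(t_0))=0$, so $\one_{\{\sigmabdtil>\phi(t_n)\}}\to\one_{\{\sigmabdtil>\phi(t_0)\}}$ a.s. Bounded convergence then gives $\lan f,\varpi(t_n)\ran\to\lan f,\varpi(t_0)\ran$, i.e.\ weak convergence in $\Mmc(\Rd)$, so $\varpi\in\Cmb([0,T]:\Mmc(\Rd))$.

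The only real subtlety is checking that $f=\zeta$ is an admissible test function despite being unbounded; this is where the quantitative estimate $\Jmc(\Theta)<\infty$ together with sub-quadratic growth of $\zeta$ enters decisively, and where the stated assumption on $\zeta$ cannot be weakened without additional moment control on $\rhobd$. Everything after that reduction is the one-line monotonicity argument above.
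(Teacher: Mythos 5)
Your proof is correct, and your overall structure (reduce to a scalar fixed-point equation for $\phi(t)=\int_0^t\lan\zeta,\varpi(s)\ran\,ds$, note that $a\mapsto G(s,a)$ is nonincreasing because $\zeta\ge 0$, then handle the continuity via $\Theta_{(4)}\ll\theta$ and dominated convergence) matches the paper's. The one genuine difference is how the two of you cash in the monotonicity. The paper argues by contradiction: setting $H=\phi_1-\phi_2$ and $M=\sup|H|>0$, it looks at $t^*=\inf\{t:H(t)=M\}$, picks a predecessor interval $[t_0,t^*]$ where $H>0$, observes $h=H'\le 0$ there, and derives $H(t^*)\le H(t_0)<M$. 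You instead run the direct Lyapunov/energy argument: $(\phi_1-\phi_2)^2$ is absolutely continuous (both $\phi_i$ are Lipschitz since $s\mapsto G(s,\phi_i(s))$ is bounded by the $\kappa(1+\Jmc(\Theta))$ estimate), its a.e.\ derivative is $2(\phi_1-\phi_2)(G(\cdot,\phi_1)-G(\cdot,\phi_2))\le 0$ by monotonicity, and it starts at $0$. Your version is a little more compact and avoids the case analysis around the sup; the paper's version avoids invoking a.e.\ differentiability and the FTC for absolutely continuous functions. Both are standard uniqueness tactics for ODEs with a monotone right-hand side and both are fully rigorous here; neither buys additional generality over the other.
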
 

\begin{proof}
Let $\varpi_\Theta:[0,T] \to \Mmc(\Rd)$ be a measurable map that satisfies
 \eqref{eq_kill:varpi_Theta}.
	Fix $t \in [0,T]$. From \eqref{eq_kill:zeta_2} and properties of $\Pmc_\infty$,
	\begin{align}
		 \Ebf^\Theta \left[ \zeta(\bbdtil_t) \one_{\left\{\sigmabdtil > \int_0^t \lan\zeta,\,\varpi_\Theta(s)\ran\,ds\right\}} \right] 
		& \le \Ctil_\zeta \Ebf^\Theta (1+\|\bbdtil_t\|^2) \nonumber\\
		& \le 2\Ctil_\zeta \left[ 1 + \Ebf^\Theta \|\bbd_t\|^2 + \Ebf^\Theta \left\| \int_{\Rmb^d \times [0,t]} y\, \rhobd (dy \, ds) \right\|^2 \right] \nonumber\\
		& \le \kappa (1+\Jmc(\Theta)) <\infty.\label{eq:eq422}
	\end{align}
	Thus from the comments below \eqref{eq_kill:varpi_Theta}, $\lan \zeta,\,\varpi_\Theta(t) \ran$ is  finite and equals
	$$\Ebf^\Theta \left[ \zeta(\bbdtil_t) \one_{\left\{\sigmabdtil > \int_0^t \lan\zeta,\,\varpi_\Theta(s)\ran\,ds\right\}} \right]$$
for all $t\in [0,T]$.
	Furthermore, it follows from  Remark~\ref{rmk_kill:Pmc_infty} that for each $f \in \Cmb_b(\Rd)$ and $t,s \in [0,T]$, as $t \to s$,
	\begin{equation*}
		f(\bbdtil_t) \one_{\left\{\sigmabdtil > \int_0^t \lan\zeta,\,\varpi_\Theta(u)\ran\,du\right\}} \to f(\bbdtil_s) \one_{\left\{\sigmabdtil > \int_0^s \lan\zeta,\,\varpi_\Theta(u)\ran\,du\right\}}, \: \mbox{ a.s.\ } \Theta,
	\end{equation*}
	and hence $\lan f,\,\varpi_\Theta(t)\ran \to \lan f,\,\varpi_\Theta(s)\ran$.
	This implies that $\varpi_\Theta \in \Cmb([0,T]:\Mmc(\Rd))$.
	Now it remains to argue the uniqueness of $\varpi_\Theta$.
	Let $\pi_1, \pi_2 \in \Cmb([0,T]:\Mmc(\Rd))$ be two solutions of \eqref{eq_kill:varpi_Theta}.
	Consider functions $h(t) \doteq \lan\zeta,\,\pi_1(t)\ran - \lan\zeta,\,\pi_2(t)\ran$ and $H(t) \doteq \int_0^t h(s) \, ds$ for $t \in [0,T]$.
	We will argue via contradiction that $H(t) \equiv 0$.
	This will show $\pi_1 = \pi_2$, proving the desired uniqueness.
	Suppose $M \doteq \sup_{0 \le t \le T} |H(t)| > 0$.
	Without loss of generality we assume that $M = \sup_{0 \le t \le T} H(t)$.
	Since $H$ is a continuous function, there exists some $t \in [0,T]$ such that $H(t)=M$.
	Define $t^* \doteq \inf \{ t : H(t) = M \}$.
	Then $H(t^*) = M$.
	Since $H(0)=0$, we have $t^*>0$.
	Again by continuity of $H$, there exists $t_0 \in (0,t^*)$ such that for all $t \in [t_0,t^*]$, $H(t) > 0$, and hence
	\begin{equation*}
		\int_0^{t} \lan\zeta,\,\pi_1(s)\ran \,ds > \int_0^{t} \lan\zeta,\,\pi_2(s)\ran \,ds,\; \forall\, t \in [t_0,t^*].
	\end{equation*}
	Recalling that $\zeta \ge 0$, we have for all $t \in [t_0,t^*]$,
	\begin{align*}
		h(t) & = \lan\zeta,\,\pi_1(t)\ran - \lan\zeta,\,\pi_2(t)\ran \\
		& = \Ebf^\Theta \left[ \zeta(\bbdtil_t) \left( \one_{\left\{\sigmabdtil > \int_0^t \lan\zeta,\,\pi_1(s)\ran\,ds\right\}} - \one_{\left\{\sigmabdtil > \int_0^t \lan\zeta,\,\pi_2(s)\ran\,ds\right\}} \right) \right] \le 0.
	\end{align*}
	This implies that
	\begin{equation*}
		M = H(t^*) = H(t_0) + \int_{t_0}^{t^*} h(s)\,ds \le H(t_0),
	\end{equation*}
	which contradicts with the definition of $t^*$.
	Hence we must have $H(t) \equiv 0$ and this completes the proof.
\end{proof}

We now introduce the rate function associated with $\{\mu^n\}$. Let $\Pmc_\infty^1$ be the collection of all $\Theta \in \Pmc_\infty$ with $\Jmc(\Theta)<\infty$ and such that there is a solution $\varpi_\Theta$ of \eqref{eq_kill:varpi_Theta} (which is necessarily unique).
Define function $I \colon \Dmc \to [0, \infty]$ as
\begin{equation} 
	\label{eq_kill:rate_function}
	I(\pi) \doteq \inf_{\{\Theta \in \Pmc_{\infty}^1 \colon \varpi_\Theta = \pi\}}  \Jmc(\Theta), \quad \pi \in \Dmc .
\end{equation}
As is usual, infimum over an empty set is taken to be $\infty$.

The following is the main result of this work.
\begin{theorem}
	\label{thm_kill:LDP}
	The sequence $\{\mu^n\}_{n \in \Nmb}$ satisfies a LDP in $\Dmc$ with rate function $I$.
\end{theorem}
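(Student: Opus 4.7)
The plan is to prove Theorem~\ref{thm_kill:LDP} via the equivalent Laplace principle formulation (Chapter~1 of~\cite{DupuisEllis2011weak}): for every $F \in \Cmb_b(\Dmc)$,
\begin{equation*}
-\frac{1}{n}\log \Ebf \exp\bigl(-n F(\mu^n)\bigr) \;\to\; \inf_{\pi \in \Dmc}\{F(\pi) + I(\pi)\}, \quad n \to \infty.
\end{equation*}
The key tool is the variational representation of Proposition~\ref{prop_kill:rep_general}, which rewrites the left-hand side as an infimum over a class of adapted controls acting on both the Brownian motions $\{B_i\}$ (through drifts $u_i^n$) and on the exponentials $\{X_i\}$ (through Radon--Nikodym densities). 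This produces controlled Brownian paths $\bbdtil^{\,n}_i = B_i + \int_0^\cdot u_i^n(s)\,ds$, controlled killing times $\sigmabdtil^{\,n}_i$, a controlled sub-probability measure $\mutil^n$ defined as in \eqref{eq_kill:mu^n(t)_intro} with these ingredients, and a cost which is the sum of the quadratic energy of the drifts and the relative entropies of the density perturbations. The central objects of analysis are then the empirical measure
\begin{equation*}
Q^n \;\doteq\; \frac{1}{n}\sum_{i=1}^n \delta_{(\bbdtil^{\,n}_i,\,B_i,\,\rho_i^n,\,\sigmabdtil^{\,n}_i)} \;\in\; \Pmc(\Xi),
\end{equation*}
where $\rho_i^n(dy\,dt) \doteq \delta_{u_i^n(t)}(dy)\,dt$, together with $\mutil^n$.

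For the Laplace upper bound I would start with a sequence of $(1/n)$-nearly optimal controls. A priori cost bounds together with the second-moment estimate \eqref{eq:eq422} yield tightness of $\{Q^n\}$ in $\Pmc(\Xi)$ and of $\{\mutil^n\}$ in $\Dmc$ by weak-convergence arguments to be carried out in Section~\ref{sec_kill:tightness_control}. Passing along a subsequence so that $(Q^n,\mutil^n)\Rightarrow(\Theta,\varpi)$, one verifies $\Theta\in\Pmc_\infty$ (the Wiener property of $\bbd$ and the SDE relation $\bbdtil_t=\bbd_t+\int y\,\rhobd(dy\,ds)$ pass to the limit by the linear structure), then shows $\varpi$ satisfies~\eqref{eq_kill:varpi_Theta}, and invokes Lemma~\ref{lem_kill:uniqueness} to conclude $\varpi=\varpi_\Theta$ and $\Theta\in\Pmc_\infty^1$. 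Lower semicontinuity of the quadratic cost in the Wasserstein topology on $\Rmc^W$ and of relative entropy then give $\liminf_n \Jmc^n \ge \Jmc(\Theta) \ge I(\varpi)$, and combined with continuity of $F$ this delivers the upper Laplace bound; this is the content of Section~\ref{sec_kill:Lapuppbd}.

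For the Laplace lower bound (Section~\ref{sec_kill:Laplowbd}), given $\varepsilon>0$ choose an $\varepsilon$-near minimizer $\pi^*$ of $F+I$ and $\Theta^*\in\Pmc_\infty^1$ with $\varpi_{\Theta^*}=\pi^*$ and $\Jmc(\Theta^*)\le I(\pi^*)+\varepsilon$. The task is to build admissible controls whose empirical measure $Q^n$ converges to $\Theta^*$, whose cost converges to $\Jmc(\Theta^*)$, and whose induced $\mutil^n$ converges to $\pi^*$. A chaining construction uses a disintegration of $\Theta^*$ to drive each particle adaptedly; a law of large numbers gives $Q^n\Rightarrow\Theta^*$, tightness of $\mutil^n$ is obtained as in the upper bound, and any limit point of $\mutil^n$ is shown to solve \eqref{eq_kill:varpi_Theta} for $\Theta^*$, hence equals $\pi^*$ by Lemma~\ref{lem_kill:uniqueness}. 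Passing to the limit,
\begin{equation*}
\limsup_n -\tfrac{1}{n}\log\Ebf e^{-nF(\mu^n)} \;\le\; F(\pi^*)+\Jmc(\Theta^*) \;\le\; \inf_\pi\{F(\pi)+I(\pi)\}+2\varepsilon,
\end{equation*}
completing the lower bound. Compactness of sub-level sets of $I$ (Section~\ref{sec_kill:rate_function_pf}) follows from the same tightness-and-identification machinery: for $\pi_k$ with $I(\pi_k)\le M$, almost-minimizers $\Theta_k\in\Pmc_\infty^1$ form a tight family, and any limit $\Theta$ belongs to $\Pmc_\infty^1$ with $\pi_k\to\varpi_\Theta$ in $\Dmc$ and $\Jmc(\Theta)\le M$.

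The main obstacle, common to both bounds and to the compactness argument, is the identification of limits through the killing indicator $\one\{\sigmabdtil > \int_0^t \lan\zeta,\mutil^n(s)\ran\,ds\}$, which is not a continuous functional of the inputs; this is exactly the failure that blocks the Sanov-plus-contraction route discussed in the introduction. The resolution rests on two observations: the absolute continuity $\Theta_{(4)}\ll\theta$ from the definition of $\Pmc_\infty$ (cf.\ Remark~\ref{rmk_kill:Pmc_infty}) ensures that the boundary set $\{\sigmabdtil = \int_0^t\lan\zeta,\varpi(s)\ran\,ds\}$ has zero limiting mass, so the indicator converges a.e.; and the sub-quadratic growth of $\zeta$ combined with the bound~\eqref{eq:eq422} supplies uniform integrability of $\zeta(\bbdtil_t^n)$, legitimating the passage to the limit inside $\lan\zeta,\mutil^n(t)\ran$. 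This is precisely the step that uses the sub-quadratic assumption emphasized after the statement of Theorem~\ref{thm_kill:LLN}.
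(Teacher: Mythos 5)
Your sketch follows the paper's proof essentially verbatim: the Laplace formulation, the variational representation of Proposition~\ref{prop_kill:rep_general}, the tightness and limit-identification machinery of Lemmas~\ref{lem_kill:tightness_1}--\ref{lem_kill:char_limit_2}, the uniqueness via Lemma~\ref{lem_kill:uniqueness}, the near-optimal and i.i.d.\ control constructions for the two bounds, and the compactness of sub-level sets, all in the same order and resting on the same two key observations (absolute continuity of $\Theta_{(4)}$ with respect to $\theta$ to handle the discontinuous killing indicator, and sub-quadratic growth of $\zeta$ for the uniform-integrability/truncation step). The only cosmetic differences are that in the upper bound the limit $(Q^*,\mutil^*)$ is in general random, so the paper carries expectations $\Ebf^*$ through before taking the infimum, and that the lower-bound construction is plain i.i.d.\ sampling from $\Theta^{\otimes\infty}$ (after reducing $\rhobd$ to deterministic-velocity form) on a filtration enlarged to contain the controls---which is precisely where the general-filtration version of the representation is invoked.
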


\begin{remark}
	\label{rem:rem921}
The representation in Proposition \ref{prop_kill:rep_general} that is key to the proof of Theorem \ref{thm_kill:LDP} can be
used to also cover a more general 	 case where  the Brownian motions $B_i$ are replaced with   $Z_i(\cdot) \doteq Y_i + B_i(\cdot)$ where $\{Y_i\}$ are i.i.d.  and independent of $\{B_i,X_i\}$, and the distribution $m$ of $Y_1$ satisfies suitable integrability conditions. In this case the law of large numbers limit will solve the PDE \eqref{eq_kill:PDE} with initial condition $m$ and in the rate function governing the LDP, the term $R(\cdot \| \theta)$ will be replaced with a term of the form $R(\cdot \| \theta \otimes m)$.
\end{remark}

It follows from the second statement of  Lemma \ref{lem_kill:uniqueness} that $I(\pi) = \infty$ for all $\pi \in \Dmc \backslash \Cmb([0,T]:\Mmc(\Rd))$. 
Hence the effective domain of the rate function is contained in $\Cmb([0,T]:\Mmc(\Rd))$. By contraction principle (Remark (c) of Theorem 4.2.1 of \cite{DemboZeitouni09}), one obtains from Theorem \ref{thm_kill:LDP} the LDP in $\Mmc(\Rd)$ for $\{\mu^n_t\}_{n \in \Nmb}$ for each $t\in [0,T]$.
\medskip

Recalling the equivalence between a LDP and a Laplace principle (cf. \cite[Chapter 1]{DupuisEllis2011weak}), in order to prove Theorem~\ref{thm_kill:LDP} it suffices to show that:\\
\noindent (1) $I$ defined in \eqref{eq_kill:rate_function} is a rate function.\\
\noindent (2) For every $F \in \Cmb_b(\Dmc)$,
\begin{equation} 
	\label{eq_kill:maincvg}
	\lim_{n\to \infty}-\frac{1}{n}\log \Ebf \left[\exp\left(-n F(\mu^n)\right)\right] = \inf_{\pi \in \Dmc}\{ F(\pi) + I(\pi) \}.	
\end{equation}
Proof of item (1) is given in Section~\ref{sec_kill:rate_function_pf} while the proof of item (2) is carried out in two steps.
First in Section~\ref{sec_kill:Lapuppbd} we will prove the \textit{Laplace upper bound}:
\begin{equation} 
	\label{eq_kill:Lapuppbd}
	\liminf_{n\to \infty}-\frac{1}{n}\log \Ebf \left[\exp\left(-n F(\mu^n)\right)\right]	\ge
	\inf_{\pi \in \Dmc}\{ F(\pi) + I(\pi) \}.	
\end{equation}
The proof of \eqref{eq_kill:maincvg} is then completed in Section~\ref{sec_kill:Laplowbd} by proving the complementary \textit{Laplace lower bound}:
\begin{equation} 
	\label{eq_kill:Laplowbd}
	\limsup_{n\to \infty}-\frac{1}{n}\log \Ebf \left[\exp\left(-n F(\mu^n)\right)\right]	\le
	\inf_{\pi \in \Dmc}\{ F(\pi) + I(\pi) \}.	
\end{equation}

\section{A general variational representation formula}
\label{sec_kill:var_rep_general}

In order to prove Theorem~\ref{thm_kill:LDP} we need to study the asymptotics of
\begin{equation}
	\label{eq_kill:exp_form}
	-\frac{1}{n} \log \Ebf \left[\exp \big(- n F(\mu^n)\big)\right],
\end{equation}
where $F \in \Cmb_b(\Dmc)$.
For this we will use certain variational representations.
Note that $F(\mu^n)$ can be written as $\Psi(\Bbd^n, \Xbd^n)$, where $\Bbd^n \doteq \{B_i\}_{i=1}^n$ is an $nd$-dimensional Brownian motion, $\Xbd^n \doteq \{X_i\}_{i=1}^n$ is an $\Rmb_+^n$-valued random variable and $\Psi$ is a suitable map.
When $\Psi$ is just a function of the Brownian motion $\Bbd^n$, a variational representation for quantities as in \eqref{eq_kill:exp_form} was obtained in~\cite{BoueDupuis1998variational} (see also~\cite{BudhirajaDupuis2000variational} where a more convenient form that allows for an arbitrary filtration is given).
In this section, we will present an extension of this result that gives a variational representation for positive functionals of both $\Bbd^n$ and $\Xbd^n$.

Let $(\Omega,\Fmc,\{\Fmc_t\},\Pbd)$ be a $\Pbd$-complete filtered probability space on which are given a $d$-dimensional standard $\Fmc_t$-Brownian motion $B$ and an $\Fmc_0$-measurable random variable $X$, which takes values in a Polish space $\Smb$ and has law $\rho$.
Main result of this section is Proposition \ref{prop_kill:rep_general} which
gives a  variational representation for $-\log \Ebf \left[\exp \left(-f(B,X)\right) \right]$, where $f \in \Mmb_b(\Cmc \times \Smb)$. The proof is given in Section \ref{sec:pfprop4.3}. 
We will apply this result  in Sections \ref{sec_kill:tightness_control} -- \ref{sec_kill:Laplowbd} for the setting where $d$ is replaced by $nd$, $\Smb$ by $\Rmb_+^n$, and $\rho$ by $\theta^{\otimes n}$.

Note that since $X$ is $\Fmc_0$-measurable, $B$ and $X$ are independent.
Consider the probability space $(\Omegatil,\Fmctil,\Pbdtil)$, where $\Omegatil = \Cmc$, $\Fmctil = \Bmc(\Cmc)$ and $\Pbdtil$ is the $d$-dimensional Wiener measure.
Under $\Pbdtil$ the canonical coordinate process $\Wtil \doteq \{\Wtil_t(\omegatil) \doteq \omegatil(t), 0 \le t \le T\}$ is a standard $d$-dimensional Brownian motion with respect to the filtration $\{\Fmctil_t^\Wtil\} \doteq \{\sigma\{\Wtil(s):s \le t\}\}$.
Let $\{\Fmctil_t\}$ be the augmented filtration, namely $\Fmctil_t \doteq \sigma\{\Fmctil_t^\Wtil \cup \ti\Nmc\}$ and $\ti\Nmc$ is the collection of all $\ti\Pbd$-null sets.
For $f \in \Mmb_b(\Cmc \times \Smb)$, define
\begin{equation}
	\label{eq_kill:ftil(xbd)_def}
	\ftil(x) \doteq - \log \Ebftil \left[ \exp \left( -f(\Wtil, x) \right) \right], \quad x \in \Smb.
\end{equation}
It follows from the independence between $B$ and $X$ that
\begin{equation*}
	-\log \Ebf \left[\exp \left(-f(B,X)\right) \right] = -\log \Ebf \left[ \exp \left(-\ftil(X)\right) \right].
\end{equation*}
Applying classical results of Donsker--Varadhan (cf.~\cite{DupuisEllis2011weak} Proposition 1.4.2) to RHS, we have the following representation formula from the above equality
\begin{equation}
	\label{eq_kill:rep_complex}
	-\log \Ebf \left[\exp \left(-f(B,X)\right) \right] = \inf_{\Pi \in \Pmc(\Smb)} \left[ R(\Pi\|\rho) + \int_\Smb \ftil(x) \Pi(dx) \right].
\end{equation}
Consider the collection of processes
\begin{equation*}
	\Amctil \doteq \{ \psitil : \mbox{the process } \psitil \mbox{ is } \Fmctil_t \mbox{-progressively measurable and } \Ebftil \int_0^T \|\psitil(s)\|^2 \,ds < \infty\}.
\end{equation*}
From Theorem 3.1 in~\cite{BoueDupuis1998variational} we now have the following variational formula
\begin{equation}
	\label{eq_kill:ftil(xbd)}
	\ftil(x) = \inf_{\psitil \in \Amctil} \Ebftil \left[ \half \int_0^T \|\psitil(s)\|^2 \, ds + f\left(\Wtil + \int_0^\cdot \psitil(s) \, ds, x \right) \right],
\end{equation}
which together with $\eqref{eq_kill:rep_complex}$ gives
\begin{align}
	& -\log \Ebf \left[\exp \left(-f(B,X)\right) \right] \notag \\
	& \quad = \inf_{\Pi \in \Pmc(\Smb)} \left\{ R(\Pi\|\rho) \right. \notag \\
	& \qquad \left. + \int_\Smb \inf_{\psitil \in \Amctil} \Ebftil \left[ \half \int_0^T \|\psitil(s)\|^2 \, ds + f\left(\Wtil + \int_0^\cdot \psitil(s) \, ds, x \right) \right] \Pi(dx) \right\}. \label{eq_kill:rep_start}
\end{align}
The above representation is inconvenient to use in our large deviation proofs due to the infimum under the integral on the right side. This is taken care of in the following proposition that roughly says that the integral and the infimum can be interchanged and furthermore the minimal filtration $\Fmctil_t$ can be replaced by a general filtration $\{\Fmchat_t\}$. The fact that one can allow for a general filtration will be crucial in the proof of the lower bound (see Section \ref{sec_kill:Laplowbd}).

Let $(\Omegabar,\Fmcbar,\Pbdbar)$ be a probability space on which are given a $d$-dimensional standard Brownian motion $\Wbar$ and an $\Smb$-valued random variable $\Xbar$, which is independent of $\Wbar$, with law $\Pi$.
Let $\{\Fmchat_t\}$ be {\it any filtration} satisfying the usual conditions, such that $\Wbar$ is still a standard $d$-dimensional Brownian motion with respect to $\{\Fmchat_t\}$ and $\Xbar$ is $\Fmchat_0$-measurable.
One example of such a filtration is $\{\Fmcbar_t \doteq \sigma\{\Fmcbar_t^{\Wbar,\Xbar} \cup \Nmcbar\}\}$, where $\Nmcbar$ is the collection of all $\Pbdbar$-null sets and $\Fmcbar_t^{\Wbar,\Xbar} \doteq \sigma\{\Xbar, \Wbar(s):s \le t\}$.
Let $\Upsilon_{\Pi} \doteq (\Omegabar,\Fmcbar,\{\Fmchat_t\},\Pbdbar)$ and consider the following collection of processes
\begin{align*}
	\Amc(\Upsilon_{\Pi}) \doteq \{ \psihat : \mbox{the process } \psihat \mbox{ is } & \Fmchat_t \mbox{-progressively measurable} \\ & \qquad \mbox{and } \Ebfbar \int_0^T \|\psihat(s)\|^2 \,ds < \infty\}.
\end{align*}

\begin{proposition}
	\label{prop_kill:rep_general}
	Let $f \in \Mmb_b(\Cmc \times \Smb)$. Then
	\begin{align} 
		& -\log \Ebf \left[\exp \left(-f(B,X)\right) \right] \notag \\
		& \quad = \inf_{\Pi, \Upsilon_{\Pi}}\inf_{\psihat \in \Amc(\Upsilon_{\Pi})} \left\{ R(\Pi\|\rho) + \Ebfbar \left[ \half \int_0^T \|\psihat(s)\|^2 \, ds + f\left(\Wbar + \int_0^\cdot \psihat(s) \, ds, \Xbar \right) \right] \right\}, \label{eq_kill:rep_general}
	\end{align}
	where the outer infimum is over all $\Pi \in \Pmc(\Smb)$ and all systems $\Upsilon_{\Pi}$. 
\end{proposition}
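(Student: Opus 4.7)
My plan is to prove the two inequalities between the two sides of \eqref{eq_kill:rep_general}, using \eqref{eq_kill:rep_start} as the starting point. The main content is twofold: (i) justifying the interchange of the integral $\int_\Smb (\cdot) \Pi(dx)$ with the inner infimum over controls, and (ii) enlarging the filtration from the augmented Brownian filtration $\{\Fmctil_t\}$ to an arbitrary filtration $\{\Fmchat_t\}$ as allowed by $\Upsilon_\Pi$.

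For the inequality that the RHS of \eqref{eq_kill:rep_general} is bounded below by the LHS, I would fix any admissible triple $(\Pi,\Upsilon_\Pi,\psihat)$ with $\psihat\in\Amc(\Upsilon_\Pi)$, and disintegrate using a regular conditional probability $\Pbdbar^x \doteq \Pbdbar(\,\cdot \mid \Xbar=x)$ on the Polish space $\Smb$. Since $\Xbar$ is $\Fmchat_0$-measurable and $\Wbar$ is an $\Fmchat_t$-Brownian motion, $\Wbar$ is independent of $\Xbar$, hence $\Wbar$ remains a standard $\Fmchat_t$-Brownian motion under $\Pbdbar^x$, and $\psihat$ remains $\Fmchat_t$-progressively measurable with finite $L^2$ norm for $\Pi$-a.e.\ $x$. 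Applying the general-filtration version of the Bou\'e--Dupuis formula (as in~\cite{BudhirajaDupuis2000variational}) to $\ftil(x)$ gives
\begin{equation*}
\Ebf^{\Pbdbar^x}\!\left[\tfrac12\int_0^T\|\psihat(s)\|^2\,ds + f\!\left(\Wbar+\int_0^\cdot\psihat(s)\,ds,\,x\right)\right] \ge \ftil(x),
\end{equation*}
and integrating against $\Pi$ and adding $R(\Pi\|\rho)$ produces the desired bound via \eqref{eq_kill:rep_start}.

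The reverse inequality is the substantive direction. Fix $\veps>0$ and choose a near-optimal $\Pi$ with $R(\Pi\|\rho)<\infty$ in \eqref{eq_kill:rep_start}. For each $x\in\Smb$ the scalar function $\ftil(x)$ in \eqref{eq_kill:ftil(xbd)_def} is attained (to within $\veps$) by some control $\psitil_x\in\Amctil$. The key step is to choose such an $\veps$-near-optimal selection $x\mapsto \psitil_x$ in a jointly measurable fashion. Once such a measurable selection is in hand, I would build the canonical space $\Omegabar = \Omegatil\times\Smb$ with measure $\Pbdtil\otimes\Pi$, set $\Wbar(\wtil,x)=\wtil$, $\Xbar(\wtil,x)=x$, define $\psihat(\wtil,x,t) \doteq \psitil_x(\wtil,t)$, and take $\Fmchat_t \doteq \sigma(\Fmctil_t \cup \sigma(\Xbar))$ augmented by null sets. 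Under this setup $\Wbar$ is still a standard Brownian motion for $\{\Fmchat_t\}$ by independence, $\Xbar$ is $\Fmchat_0$-measurable, $\psihat\in\Amc(\Upsilon_\Pi)$, and Fubini's theorem together with the near-optimality of each $\psitil_x$ yields the RHS of \eqref{eq_kill:rep_general} bounded above by $R(\Pi\|\rho) + \int \ftil(x)\Pi(dx) + \veps$, whence the claim upon letting $\veps\downarrow 0$.

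The main obstacle I expect is the measurable selection step: the family of near-optimal processes $\{\psitil_x\}$ lives in the infinite-dimensional space $\Amctil$, and a direct appeal to Kuratowski--Ryll-Nardzewski or Jankov--von Neumann requires verifying that the graph of the $\veps$-optimal set-valued map is suitably measurable as a function of $x$. My plan is to reduce to a tractable selection problem by first approximating $\psitil_x$ by bounded, piecewise-constant (in time) controls on a deterministic grid whose values at the grid points are measurable functionals of finitely many coordinates of $\Wtil$; for such finite-dimensional near-optimization the cost functional depends continuously on $(x,\text{control values})$, so classical measurable selection applies. A standard diagonal/approximation argument then produces a jointly measurable $\psitil_x$ achieving $\ftil(x)$ to within $\veps$, and I expect the detailed construction (deferred to Section~\ref{sec:pfprop4.3}) to proceed along these lines.
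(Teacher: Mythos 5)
Your decomposition into two inequalities starting from \eqref{eq_kill:rep_start} is the same as the paper's, and your first direction (RHS $\ge$ LHS) is essentially correct and, if anything, slightly cleaner than the paper's: by disintegrating $\Pbdbar$ given $\Xbar = x$ and applying the general-filtration Bou\'e--Dupuis bound to $\ftil(x)$ for each $x$, you handle the interchange of integral/infimum and the filtration enlargement simultaneously. (The paper instead proves this direction only for $\{\Fmcbar_t\}$ in Lemma~\ref{lem_kill:rep_single}, via the Stroock--Varadhan representation $\psibar(s,\omegabar)=F(s,\Xbar(\omegabar),\Wbar_{s\wedge\cdot}(\omegabar))$, and handles the filtration enlargement in a separate step at the end of Section~\ref{sec:pfprop4.3} using measurable selection for simple processes.) Your conditional-law argument needs the minor technical verification that $\Wbar$ remains an $\{\Fmchat_t\}$-Brownian motion under $\Pbdbar^x$ for $\Pi$-a.e.\ $x$, but this follows because $\Wbar$ is independent of $\Fmchat_0$ and $\Xbar$ is $\Fmchat_0$-measurable.

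The gap is in your reverse direction, and it is a real one. You propose to measurably select $\veps$-near-optimal controls $x\mapsto\psitil_{x,\veps}$ and reduce to ``classical measurable selection'' via piecewise-constant controls whose values depend on finitely many coordinates of $\Wtil$. This does not actually produce a finite-dimensional selection problem: the ``control values'' are still \emph{functions} $c_j(\Wtil(t_1),\dotsc,\Wtil(t_j))$, so the parameter space for the selection is an infinite-dimensional function space, and the measurability of the $\veps$-optimal set-valued map remains to be verified. Moreover, the continuity of the cost in $(x,\text{control})$ on which you rely would require $f$ to be continuous, but your sketch never performs the reduction from $f\in\Mmb_b$ to $f\in\Cmb_b$; the paper does this first (Lemma~\ref{lem_kill:continuity of repn}, using Lusin-type approximation and Girsanov to control the relative entropy), and that reduction is essential. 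Having reduced to continuous $f$, the paper then \emph{avoids} abstract measurable selection entirely: it picks arbitrary (non-measurable) $\veps$-optimal controls $\psitil_{x,\veps}$, derives a uniform energy bound, obtains tightness of $\{\Wtil^{x,\veps}\}_{x}$ to find a single compact $K_\eps\subset\Cmc$, restricts $x$ to a compact $\Ktil_\eps\subset\Smb$, and uses uniform continuity of $f$ on $K_\eps\times\Ktil_\eps$ to replace $\psitil_{x,\veps}$ by a piecewise-constant (in $x$) control built from finitely many $\psitil_{y_i,\veps}$. This finite-partition construction is the key device your sketch is missing, and it is much more robust than trying to make Jankov--von Neumann work in $\Amctil$.
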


\section{Variational representation and tightness properties}
\label{sec_kill:tightness_control}

In this section we will apply Proposition~\ref{prop_kill:rep_general} to establish a variational representation for 
\begin{equation*}
	-\frac{1}{n} \log \Ebf \left[\exp \big(- n F(\mu^n)\big)\right],
\end{equation*}
where $F \in \Cmb_b(\Dmc)$.

Given $\Pi^n \in \Pmc(\Rmb_+^n)$ such that $R(\Pi^n \| \theta^{\otimes n}) < \infty$, let $\Upsilon_{\Pi^n} \doteq (\Omegabar, \Fmcbar, \{\Fmchat_t\}_{0\le t \le T}, \Pbdbar)$ be a filtered probability space on  which one is given an $\Rmb_+^n$-valued $\Fmchat_0$-measurable random variable $\Sbd^n \doteq (\Sbd_i^n)_{i=1}^n$ with law  $\Pi^n$, and an $nd$-dimensional standard $\Fmchat_t$-Brownian motion $\betabd^n \doteq (\betabd_i^n)_{i=1}^n$.

Denote by $\Ebfbar$ the expectation under $\Pbdbar$.
We can disintegrate $\Pi^n$ as
\begin{equation*}
	\Pi^n(dx_1,\dotsc,dx_n) = \Pi_1^n(dx_1) \Pi_2^n(x_1,dx_2) \dotsm \Pi_n^n(x_1,\dotsc,x_{n-1},dx_n) \doteq \prod_{i=1}^n \nu_i^n(\xbd^n,dx_i),
\end{equation*}
where $\xbd^n \doteq (x_i)_{i=1}^n \in \Rmb_+^n$.
Define random measures 
\begin{equation}
	\label{eq_kill:nubd^n}
	\nubd_i^n \doteq \nu_i^n(\Sbd^n,\cdot), \; i=1,\dotsc,n. 
\end{equation}
Let
\begin{multline*}
	\Amc_n(\Upsilon_{\Pi^n}) \doteq \{ \psibd^n \doteq (\psibd_i^n)_{i=1}^n : \mbox{the process } \psibd_i^n \mbox{ is } \Rd \mbox{-valued } \\ \Fmchat_t \mbox{-progressively measurable and } \Ebfbar \sum_{i=1}^n \int_0^T \|\psibd_i^n(s)\|^2 \,ds < \infty\}.
\end{multline*}
For $\psibd^n \in \Amc_n(\Upsilon_{\Pi^n})$, consider the following controlled processes
\begin{align}
	\betabdtil^n & \doteq (\betabdtil_i^n)_{i=1}^n, \quad \betabdtil_i^n(t) \doteq \betabd_i^n(t) + \int_0^t \psibd_i^n(s) \, ds, \; i=1,\dotsc,n, \label{eq_kill:betabdtil^n} \\
	\mutil^n(t) & = \frac{1}{n} \sum_{i=1}^n \delta_{\betabdtil_i^n(t)} \one_{\left\{\Sbd_i^n > \int_0^t \lan\zeta,\,\mutil^n(s)\ran\, ds\right\}}. \label{eq_kill:mutil^n}
\end{align}
Note that since $R(\Pi^n \| \theta^{\otimes n}) < \infty$, $\Pi^n$ is absolutely continuous with respect to the Lebesgue
measure on $\mathbb{R}_+^n$.
Hence a.s.\ $\Pbdbar$ we can enumerate $\{\Sbd_i^n\}_{i=1}^n$ in a strictly increasing order, and the unique solution of \eqref{eq_kill:mutil^n} can be written explicitly in a recursive manner.
From Proposition \ref{prop_kill:rep_general} we have the following variational formula which will be the starting point of our proof to Theorem~\ref{thm_kill:LDP}.

\begin{lemma} \label{lem_kill:rep}
	Let $F \in \Cmb_b(\Dmc)$. Then
	\begin{align} 
		& -\frac{1}{n} \log \Ebf \left[\exp \left(-n F(\mu^n)\right)\right] \notag \\
		& \qquad = \inf_{\Pi^n, \Upsilon_{\Pi^n}} \inf_{\psibd^n \in \Amc_n(\Upsilon_{\Pi^n})} \Ebfbar \left[ \frac{1}{n} \sum_{i=1}^n \left( R(\nubd_i^n \| \theta) + \frac{1}{2} \int_0^T \|\psibd_i^n(s)\|^2 \, ds \right) + F(\mutil^n) \right], \label{eq_kill:rep_key}
	\end{align}
	where the outside infimum is over all $\Pi^n \in \Pmc(\mathbb{R}_+^n)$ and all systems $\Upsilon_{\Pi^n}$.
\end{lemma}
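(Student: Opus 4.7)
The plan is to obtain this lemma as a direct consequence of Proposition \ref{prop_kill:rep_general}, applied with $d$ replaced by $nd$, $\Smb$ by $\Rmb_+^n$, $\rho$ by $\theta^{\otimes n}$, $B$ by $\Bbd^n = (B_1,\ldots,B_n)$, and $X$ by $\Xbd^n = (X_1,\ldots,X_n)$. The first step is to show that the particle system can be expressed as a measurable function of $(\Bbd^n,\Xbd^n)$: almost surely we can order the $X_i$'s strictly, and then $\mu^n$ is constructed recursively (each successive particle is killed at the first time its $X$-threshold is exceeded by the cumulative killing rate $\int_0^\cdot \langle \zeta,\mu^n(s)\rangle ds$, which up to that point is determined by the already-killed particles and the surviving ones). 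Denote the resulting measurable map by $G\colon \Cmc^n\times \Rmb_+^n \to \Dmc$, so that $\mu^n = G(\Bbd^n,\Xbd^n)$; comparing \eqref{eq_kill:mu^n(t)_intro} and \eqref{eq_kill:mutil^n} we see that likewise $\mutil^n = G(\betabdtil^n,\Sbd^n)$.

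Next I would apply Proposition \ref{prop_kill:rep_general} to the bounded measurable functional $f \doteq n\,F\circ G$ on $\Cmc^n \times \Rmb_+^n$. The canonical decomposition of the $nd$-dimensional Brownian motion $\Wbar$ into $n$ independent $d$-dimensional components identifies it with $\betabd^n = (\betabd_1^n,\ldots,\betabd_n^n)$, and similarly writes any control $\psihat \in \Amc(\Upsilon_{\Pi^n})$ as $\psibd^n = (\psibd_i^n)_{i=1}^n \in \Amc_n(\Upsilon_{\Pi^n})$ with $\|\psihat(s)\|^2 = \sum_{i=1}^n \|\psibd_i^n(s)\|^2$. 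Since the controlled process on the $nd$-dimensional side is $\Wbar+\int_0^\cdot \psihat(s)\,ds = \betabdtil^n$, we obtain
\begin{equation*}
-\log \Ebf\!\left[e^{-nF(\mu^n)}\right] = \inf_{\Pi^n,\,\Upsilon_{\Pi^n}}\inf_{\psibd^n \in \Amc_n(\Upsilon_{\Pi^n})} \left\{ R(\Pi^n\|\theta^{\otimes n}) + \Ebfbar\!\left[\tfrac{1}{2}\sum_{i=1}^n \int_0^T \|\psibd_i^n(s)\|^2\,ds + nF(\mutil^n)\right]\right\}.
\end{equation*}

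The remaining step is to convert the joint entropy $R(\Pi^n\|\theta^{\otimes n})$ into the sum of expected conditional entropies appearing in the lemma. Using the disintegration $\Pi^n(dx_1,\dotsc,dx_n) = \prod_{i=1}^n \nu_i^n(\xbd^n,dx_i)$ (where $\nu_i^n(\xbd^n,\cdot)$ depends only on $(x_1,\ldots,x_{i-1})$) together with $\theta^{\otimes n}(dx_1,\dotsc,dx_n) = \prod_{i=1}^n \theta(dx_i)$, the chain rule for relative entropy gives
\begin{equation*}
R(\Pi^n\|\theta^{\otimes n}) = \sum_{i=1}^n \Ebfbar\bigl[R(\nubd_i^n\|\theta)\bigr],
\end{equation*}
where $\nubd_i^n$ is as in \eqref{eq_kill:nubd^n}; this identity is proved by writing the Radon-Nikodym derivative as a product and taking expectations (the case $R(\Pi^n\|\theta^{\otimes n})=\infty$ is handled by the convention that infeasible $\Pi^n$ contribute $+\infty$ to both sides).

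Dividing the displayed equality by $n$ and substituting the two identities above yields \eqref{eq_kill:rep_key}. No step presents a real obstacle; the only care needed is in the measurable recursive construction of $G$ (which is routine since the $X_i$'s are a.s.\ distinct and $s\mapsto \langle \zeta,\mu^n(s)\rangle$ is piecewise continuous between killing times) and in invoking the entropy chain rule, both of which are standard.
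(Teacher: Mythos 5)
Your proof is correct and follows essentially the same route as the paper: both express $\mu^n$ (and hence $F(\mu^n)$) as a measurable functional of $(\Bbd^n,\Xbd^n)$, apply Proposition~\ref{prop_kill:rep_general} with $d\to nd$, $\Smb\to\Rmb_+^n$, $\rho\to\theta^{\otimes n}$ and $f = n\,F\circ G$, and then invoke the chain rule for relative entropy to split $R(\Pi^n\|\theta^{\otimes n})$ into the sum $\Ebfbar\bigl[\sum_{i=1}^n R(\nubd_i^n\|\theta)\bigr]$. Your extra remark that the same measurable map $G$ also carries the controlled inputs to $\mutil^n$ is exactly the observation the paper records as $F(\mutil^n)=\Psi(\betabdtil^n,\Sbd^n)$ a.s.
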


\begin{proof}
	Note that
	\begin{equation*}
		F(\mu^n) = F\left(\left\{\frac{1}{n} \sum_{i=1}^n \delta_{B_i(t)} \one_{\left\{ X_i > \int_0^t \lan\zeta,\,\mu^n(s)\ran\,ds\right\}} \right\}_{t\in[0,T]}\right) \doteq \Psi(\Bbd^n, \Xbd^n)
	\end{equation*} 
	for some $\Psi \in \Mmb_b(\Cmc^{n} \times \Rmb_+^n)$, where $\Bbd^n \doteq (B_i)_{i=1}^n$ and $\Xbd^n \doteq (X_i)_{i=1}^n$.
	The function $\Psi$ depends also on $n$ but that dependence is suppressed in the notation.
	With the same measurable function $\Psi$, $F(\mutil^n) = \Psi(\betabdtil^n, \Sbd^n)$ a.s.
	Applying Proposition~\ref{prop_kill:rep_general} with $d$, $\Smb$, $f$, $\Wbar$, $\Xbar$, $\Cmc$ and $\rho$  replaced by $nd$, $\Rmb_+^n$, $n\Psi$, $\betabd^n$, $\Sbd^n$, $\Cmc^n$ and $\theta^{\otimes n}$  respectively, we have
	\begin{align*} 
		& -\frac{1}{n} \log \Ebf \left[\exp \left(-n F(\mu^n)\right)\right] \\
		& \qquad = \inf_{\Pi^n, \Upsilon_{\Pi^n}} \inf_{\psibd^n \in \Amc_n(\Upsilon_{\Pi^n})} \left\{ \frac{1}{n} R(\Pi^n \| \theta^{\otimes n}) + \Ebfbar \left[ \frac{1}{2n} \sum_{i=1}^n \int_0^T \|\psibd_i^n(s)\|^2 \, ds + F(\mutil^n) \right] \right\}.
	\end{align*}
	Using chain rule for relative entropies (cf.~\cite{DupuisEllis2011weak}, Theorem C.3.1) it follows that
	\begin{equation*}
		R(\Pi^n \| \theta^{\otimes n}) = \Ebfbar \left[ \sum_{i=1}^n R(\nubd_i^n \| \theta) \right],
	\end{equation*}
	where $(\nubd_i^n)_{i=1}^n$ is defined in \eqref{eq_kill:nubd^n}.
	The result follows on combining the above two displays.
\end{proof}

We now present some tightness results that will play a key role in our proofs. For each $n \in \Nmb$, let $\Pi^n$,
$\Upsilon_{\Pi^n}$ (with the associated  $\Sbd^n$) and $\psibd^n \in \Amc_n(\Upsilon_{\Pi^n})$ be such that
\begin{equation} 
	\label{eq_kill:tightness_assumption}
	\sup_{n \in \Nmb} \Ebfbar \left[ \frac{1}{n} \sum_{i=1}^n \left( R(\nubd_i^n \| \theta) + \frac{1}{2} \int_0^T \|\psibd_i^n(s)\|^2 \, ds \right) \right] \doteq C_0 < \infty.
\end{equation}
For $i=1, \dotsc , n$, define $\Rmc^W$-valued random variables $\rhobd_i^n$ as
\begin{equation*}
	\rhobd_i^n(A \times [0,t]) \doteq \int_0^t \delta_{A}(\psibd_i^n(s)) \,ds, \; A \in \Bmc(\Rmb^d),\; t \in [0,T].
\end{equation*}
Recall the space $\Xi$ from Section \ref{sec_kill:canonical}. Define a sequence $\{(Q^n,\nu^n)\}_{n \in \Nmb}$ of $\Pmc(\Xi) \times \Pmc(\Rmb_+)$-valued random variables as
\begin{equation} 
	Q^n(A) \doteq \frac{1}{n} \sum_{i=1}^n \delta_{(\betabdtil_i^n, \betabd_i^n, \rhobd_i^n, \Sbd_i^n)}(A),\; A \in \Bmc(\Xi) \label{eq_kill:Q^n_general}
\end{equation}
and
\begin{equation}
	\nu^n(A) \doteq \frac{1}{n} \sum_{i=1}^n \nubd_i^n(A),\; A \in \Bmc(\Rmb_+). \label{eq_kill:nu^n_general}
\end{equation}
We then have the following tightness result.
In proving this result we will use some basic facts about tightness functions. Recall (see  \cite[Section 5.3]{DupuisEllis2011weak})  that a measurable map $\psi$ from a Polish space $\mathbb{S}$ to $\mathbb{R}$ is called a tightness function if it is bounded from below and has pre-compact level sets (i.e. for some $c \in \mathbb{R}$, $\psi(x) \ge c$ for all $x\in \mathbb{S}$ and the closure of $\{x\in \mathbb{S}: \psi(x)\le m\}$ is compact for all $m \in \mathbb{R}$). The following two facts about tightness functions will be used:
\begin{itemize}
	\item If $\psi$ is a tightness function on $\mathbb{S}$ and $\{\gamma_n\}\subset\Pmc(\mathbb{S})$ is such that
	$\sup_n \int_{\Smb} \psi(x) \gamma_n(dx) < \infty$, then $\{\gamma_n\}$ is tight.
	\item If $\psi$ is a tightness function on $\mathbb{S}$, then $\Psi: \Pmc(\Smb) \to \mathbb{R} \cup \{\infty\}$ defined as
	$\Psi(\gamma)\doteq \int_{\Smb} \psi(x) \gamma(dx)$ is a tightness function on $\Pmc(\Smb)$.
\end{itemize}
We will also appeal to the facts that a sequence $\{\Gamma^n\}$ of $\Pmc(\Smb)$-valued random variables is tight if and only if
$\gamma^n \doteq \Ebf\Gamma^n$ is a tight sequence in $\Pmc(\Smb)$ (see \cite[Chapter 1]{Sznitman1991}); and for Polish spaces $\Smb_i$, $i=1,2$, a sequence of probability measures on $\Smb_1\times \Smb_2$ is tight if the corresponding sequences of marginal distributions on  $\Smb_i$, $i=1,2$, are tight.
\begin{lemma} 
	\label{lem_kill:tightness_1}
	Suppose \eqref{eq_kill:tightness_assumption} holds.
	The sequence of random variables $\{(Q^n,\nu^n)\}_{n \ge 1}$ is tight in $\Pmc(\Xi) \times \Pmc(\Rmb_+)$.
\end{lemma}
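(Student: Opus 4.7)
\textbf{Proof plan for Lemma \ref{lem_kill:tightness_1}.} I will use the standard Sznitman criterion recalled in the text: a sequence of $\Pmc(\Smb)$-valued random variables $\{\Gamma^n\}$ is tight iff $\{\Ebf\Gamma^n\}$ is tight in $\Pmc(\Smb)$. Applying this to $Q^n$ and $\nu^n$ separately, and using that a product sequence on a product Polish space is tight iff each marginal sequence is tight, it suffices to prove that each of the four coordinate marginals of $\Ebfbar Q^n$ (a measure on $\Xi=\Cmc\times\Cmc\times\Rmc^W\times\Rmb_+$) and $\Ebfbar\nu^n$ (a measure on $\Rmb_+$) is tight. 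For each marginal I will exhibit a tightness function in the sense of \cite{DupuisEllis2011weak}, apply $\Ebfbar$, and use the bound \eqref{eq_kill:tightness_assumption} to produce a uniform-in-$n$ estimate.

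The second marginal of $\Ebfbar Q^n$ is $\frac{1}{n}\sum_i \Lmc(\betabd^n_i)$, and since each $\betabd^n_i$ is a standard Brownian motion, this marginal equals Wiener measure for every $n$, hence is trivially tight. For the third marginal, I will use the tightness function $r\mapsto\int_{\Rd\times[0,T]}\|y\|^2\, r(dy\,dt)$ on $\Rmc^W$; the identity $\int\|y\|^2\rhobd^n_i(dy\,ds)=\int_0^T\|\psibd^n_i(s)\|^2ds$ together with \eqref{eq_kill:tightness_assumption} gives $\Ebfbar\frac{1}{n}\sum_i\int\|y\|^2\rhobd^n_i(dy\,ds)\le 2C_0$. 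For the first marginal, the decomposition \eqref{eq_kill:betabdtil^n} together with Cauchy--Schwarz yields the pointwise bounds $\sup_t\|\betabdtil^n_i(t)\|\le \sup_t\|\betabd^n_i(t)\|+\sqrt{T}(\int_0^T\|\psibd^n_i(s)\|^2ds)^{1/2}$ and $\omega_{\betabdtil^n_i}(\delta)\le \omega_{\betabd^n_i}(\delta)+\sqrt{\delta}(\int_0^T\|\psibd^n_i(s)\|^2ds)^{1/2}$; averaging over $i$, using Jensen, and using Wiener moduli estimates for the $\betabd^n_i$ together with \eqref{eq_kill:tightness_assumption} verifies the two Arzela--Ascoli-type tightness conditions on $\Cmc$ uniformly in $n$.

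The interesting marginal is the fourth one, and also the tightness of $\Ebfbar\nu^n$, since the random variables $\Sbd^n_i$ are controlled only through the relative entropies $R(\nubd^n_i\|\theta)$. I will use the identity map $x\mapsto x$ as a tightness function on $\Rmb_+$ (its sub-level sets are compact), combined with the Donsker--Varadhan variational formula
\begin{equation*}
\alpha\int_{\Rmb_+}\!x\,\nubd^n_i(dx)\le R(\nubd^n_i\|\theta)+\log\!\int_{\Rmb_+}\!e^{\alpha x}\theta(dx)=R(\nubd^n_i\|\theta)+\log\frac{1}{1-\alpha},
\end{equation*}
valid for any $\alpha\in(0,1)$. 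Taking $\Ebfbar$, averaging over $i$, and using \eqref{eq_kill:tightness_assumption} bounds $\frac{1}{n}\sum_i\Ebfbar\Sbd^n_i=\frac{1}{n}\sum_i\Ebfbar\int x\,\nubd^n_i(dx)$ uniformly in $n$; since $\Ebfbar Q^n$ has fourth marginal $\frac{1}{n}\sum_i\Lmc(\Sbd^n_i)$, and $\Ebfbar\nu^n=\frac{1}{n}\sum_i\Ebfbar\nubd^n_i$ coincides with this same measure, the same estimate handles both remaining marginals.

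The main obstacle is the fourth marginal: the natural square-function bound available for the Brownian/control terms is not available here, so one genuinely needs the exponential Chebyshev/entropy inequality to convert the relative entropy bound into a uniform moment bound on $\Sbd^n_i$. Everything else is a routine computation once the correct tightness function on each factor has been identified.
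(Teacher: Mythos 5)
Your proof is correct, and in two of the four marginal arguments it departs from the paper's route; the remaining two ($Q^n_{(2)}$, trivially Wiener, and $Q^n_{(3)}$, via the tightness function $r \mapsto \int \|y\|^2\,r(dy\,dt)$) are identical. For $Q^n_{(1)}$ the paper decomposes $\betabdtil_i^n = \betabd_i^n + \ubd_i^n$, proves tightness of the empirical measure of the controls $\ubd_i^n$ through the tightness function $g_2(x) = \int_0^T\|\dot x(s)\|^2\,ds + \|x(0)\|^2$ on $\Cmc$, and then pushes forward by the continuous addition map; you instead hit $\betabdtil_i^n$ directly with Arzel\`a--Ascoli-style first-moment estimates, bounding $\sup_t\|\betabdtil_i^n(t)\|$ and the modulus $\omega_{\betabdtil_i^n}(\delta)$ by Cauchy--Schwarz and appealing to $\Ebf\,\omega_B(\delta)\to 0$. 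Both work; the paper's version packages the control cost into a single tightness function and avoids having to invoke expected Wiener-modulus estimates, so it is slightly cleaner, but your route is an honest alternative. For $Q^n_{(4)}$ and $\nu^n$ the paper uses the fact that $R(\cdot\|\theta)$ is itself a convex tightness function on $\Pmc(\Rmb_+)$ (DE Lemma 1.4.3) together with Jensen, then transfers to $Q^n_{(4)}$ via the identity $\Ebfbar Q^n_{(4)}=\Ebfbar\nu^n$; you instead convert the entropy bound into a uniform first-moment bound on $\Ebfbar\nu^n$ through the Donsker--Varadhan inequality with $g(x)=\alpha x$, $\alpha\in(0,1)$, and use the identity map $x\mapsto x$ as a tightness function on $\Rmb_+$. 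This is equally valid and somewhat more elementary and explicit (it avoids citing DE Lemma 1.4.3), at the cost of the exponential-moment computation $\int e^{\alpha x}\theta(dx)=1/(1-\alpha)$, which of course exploits the light tail of the exponential law $\theta$; the paper's packaging is the more canonical one for Sanov-type arguments and would survive changing $\theta$ without extra work. Your use of the same identity $\Ebfbar Q^n_{(4)}=\Ebfbar\nu^n$ to kill two birds with one stone is a nice simplification, and your diagnosis that the fourth marginal is where the genuine work lies matches the paper's emphasis.
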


\begin{proof}
	We first argue tightness of $\{Q^n\}$. It suffices to show that the sequences  $\{Q^n_{(i)}\}$ are tight for $i= 1, 2, 3, 4$.
	Since $\{\betabd_i^n\}$ are standard Brownian motions, $\{Q^n_{(2)}\}$ is clearly tight.  
Next we consider $Q^n_{(3)}$.
	Note that
	\begin{equation*}
		g_1(r) \doteq \int_{\Rmb^d\times [0,T]} \|y\|^2 \, r(dy\,dt),\; r \in \Rmc^W,
	\end{equation*}
	is a tightness function on $\Rmc^W$.
	This says that $G_1 \colon \Pmc(\Rmc^W) \to [0,\infty]$ defined as
	\begin{equation*}
		G_1(m) \doteq \int_{\Rmc^W} g_1(r) \, m(dr), \; m \in \Pmc(\Rmc^W)
	\end{equation*}
	is a tightness function on $\Pmc(\Rmc^W)$.  
	Next note that
	\begin{align*}
		\Ebfbar G_1(Q^n_{(3)}) &= \frac{1}{n} \sum_{i=1}^n \Ebfbar g_1(\rhobd_i^n) = \frac{1}{n} \sum_{i=1}^n \Ebfbar \int_{\Rmb^d\times [0,T]} \|y\|^2 \, \rhobd_i^n(dy\,dt) \\
		& = \frac{1}{n} \sum_{i=1}^n \Ebfbar \int_0^T \|\psibd_i^n(s)\|^2 \, ds \le 2C_0.
	\end{align*}
	This proves that $\{Q^n_{(3)}\}$ is tight.
	We now argue tightness of $\{Q^n_{(1)}\}$.
	Define a sequence $\{\Qtil^n\}_{n \in \Nmb}$ of $\Pmc(\Cmc)$-valued random variables as
	\begin{equation*}
		\Qtil^n(A) \doteq \frac{1}{n} \sum_{i=1}^n \delta_{\ubd_i^n}(A), \; A \in \Bmc(\Cmc),
	\end{equation*} 
	where
	\begin{equation*}
		\ubd^n_i(t) \doteq \int_{\Rd \times [0,t]} y \, \rhobd_i^n(dy\,ds) =  \int_0^t \psibd^n_i(s) \, ds, \; t \in [0,T].
	\end{equation*}
	We claim that $\{\Qtil^n\}_{n \in \Nmb}$ is a tight sequence.  
	To see this, note that $g_2 \colon \Cmc \to [0,\infty]$ defined as
	\begin{align*}
		g_2(x) \doteq \left \{
		\begin{array}{cl}
			\int_0^T \|\dot{x}(s)\|^2 \, ds + \|x(0)\|^2, & \mbox{ if $x$ is absolutely continuous,}  \\
		 	\infty, & \text{ otherwise,}	\\
		\end{array} \right.
	\end{align*}
	is a tightness function on $\Cmc$, from which it follows that $G_2 \colon \Pmc(\Cmc)\to [0,\infty]$, defined as
	\begin{equation*}
		G_2(m) \doteq \int_{\Cmc} g_2(x) \, m(dx), \; m \in \Pmc(\Cmc),
	\end{equation*}
	is a tightness function on $\Pmc(\Cmc)$.  
	Also,
	\begin{equation*}
		\Ebfbar(G_2(\Qtil^n)) = \frac{1}{n} \sum_{i=1}^n \Ebfbar g_2(\ubd^n_i) = \frac{1}{n} \sum_{i=1}^n \Ebfbar \int_0^T \|\psibd^n_i(s)\|^2 \, ds \le 2C_0.
	\end{equation*}
	This proves tightness of $\{\Qtil^n\}_{n \in \Nmb}$.
	Define the sequence $\{\Qbar^n_{(2,3)}\}_{n \in \Nmb}$ of $\Pmc(\Cmc \times \Cmc)$-valued random variables as
	\begin{equation*}
		\Qbar^n_{(2,3)}(A) \doteq \frac{1}{n} \sum_{i=1}^n \delta_{(\betabd_i^n, \ubd_i^n)}(A),\; A \in \Bmc(\Cmc \times \Cmc).
	\end{equation*}
	Then, from tightness of $\{Q^n_{(2)}\}$ and $\{\Qtil^n\}$ it follows that $\{\Qbar^n_{(2,3)}\}$ is tight.
	Next, noting that the map $g_3 \colon \Cmc \times \Cmc \to \Cmc$ defined as $g_3(x,u) \doteq x+u$ is continuous and that $Q^n_{(1)} = \Qbar^n_{(2,3)}\circ g_3^{-1}$, we get tightness of $\{Q^n_{(1)}\}$.
	
	Finally, tightness of $\{Q^n_{(4)}\}$ and $\{\nu^n\}$ can be proved using a standard argument as in the proof of Sanov's theorem (see e.g.~\cite[Theorem 2.2.1]{DupuisEllis2011weak}).
	However we provide details for sake of completeness.
	Note that $G_3(\cdot) \doteq R(\cdot \| \theta)$ is a convex tightness function on $\Pmc(\Rmb_+)$ (cf.~\cite[Lemma 1.4.3]{DupuisEllis2011weak}), and that by \eqref{eq_kill:tightness_assumption} and Jensen's inequality,
	\begin{equation*}
		\Ebfbar G_3(\nu^n) = \Ebfbar R(\nu^n \| \theta) \le \Ebfbar \left[ \frac{1}{n} \sum_{i=1}^n R(\nubd_i^n \| \theta) \right] \le C_0.
	\end{equation*}
	So $\{\nu^n\}$ is tight, which implies that $\{\Ebfbar \nu^n\}$ is also tight.
	Next, since $\nubd_i^n$ is the conditional distribution used to select $\Sbd_i^n$, for any $f \in \Mmb_b(\Rd)$,
	\begin{equation*}
		\Ebfbar \int_\Rd f(x) \, Q^n_{(4)}(dx) = \frac{1}{n} \sum_{i=1}^n \Ebfbar f(\Sbd_i^n) = \frac{1}{n} \sum_{i=1}^n \Ebfbar \int_\Rd f(x) \, \nubd_i^n(dx) = \Ebfbar \int_\Rd f(x) \, \nu^n(dx).
	\end{equation*}
	So $\Ebfbar Q^n_{(4)} = \Ebfbar \nu^n$, from which we have the desired tightness of $\{Q^n_{(4)}\}$.
\end{proof}

The following lemma gives a useful characterization of weak limit points of $\{(Q^n, \nu^n): n \in \Nmb\}$. 
Recall the space $\Pmc_{\infty}$ introduced in Section \ref{sec_kill:rate function LDP}.
\begin{lemma}
	\label{lem_kill:char_limit_1}
	Suppose \eqref{eq_kill:tightness_assumption} holds and suppose $(Q^n,\nu^n)$ converges along a subsequence, in distribution, to $(Q^*,\nu^*)$ given on some probability space $(\Omega^*, \Fmc^*, \Pbd^*)$. 
	Then $Q^*_{(4)} = \nu^*$, $Q^* \in \Pmc_{\infty}$, and $\Jmc(Q^*) < \infty$, a.s.\ $\Pbd^*$.
\end{lemma}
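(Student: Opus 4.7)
The plan is to verify three claims for $Q^*$, all $\Pbd^*$-almost surely: (a) $Q^*_{(4)} = \nu^*$; (b) $Q^*$ satisfies the four conditions defining $\Pmc_\infty$; and (c) $\Jmc(Q^*) < \infty$.

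For (a), I would fix $f \in \Cmb_b(\Rmb_+)$ and exploit the disintegration \eqref{eq_kill:nubd^n}, which makes $\nubd_i^n$ the regular conditional law of $\Sbd_i^n$ given $\Sbd_1^n, \ldots, \Sbd_{i-1}^n$. The differences $M_i^n \doteq f(\Sbd_i^n) - \langle f, \nubd_i^n\rangle$ then form a bounded martingale difference sequence, and orthogonality yields
\begin{equation*}
\Ebfbar\left[\left(\langle f, Q^n_{(4)}\rangle - \langle f, \nu^n\rangle\right)^2\right] \le \frac{4\|f\|_\infty^2}{n} \to 0 .
\end{equation*}
Combined with the joint weak convergence $(Q^n_{(4)}, \nu^n) \Rightarrow (Q^*_{(4)}, \nu^*)$ and the continuous mapping theorem, this gives $\langle f, Q^*_{(4)}\rangle = \langle f, \nu^*\rangle$ a.s.; a countable separating class of $f$ then yields $Q^*_{(4)} = \nu^*$.

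For (b), three of the four properties are easy. Property (2) uses that $g_1(r) \doteq \int \|y\|^2\, r(dy\,dt)$ is lower semicontinuous and non-negative on $\Rmc^W$, so the induced map $Q \mapsto \Ebf^Q[g_1]$ is lower semicontinuous on $\Pmc(\Xi)$; the Fatou form of the portmanteau theorem combined with \eqref{eq_kill:tightness_assumption} gives $\Ebf^*\Ebf^{Q^*}[g_1] \le \liminf_n \Ebfbar\Ebf^{Q^n}[g_1] \le 2C_0$, hence $g_1 < \infty$ $Q^*$-a.s.\ almost surely. Property (3), the identity $\bbdtil_t = \bbd_t + \int_{\Rd\times[0,t]} y\,\rhobd(dy\,ds)$, holds identically on the support of $Q^n$ by \eqref{eq_kill:betabdtil^n}, and the convergence of first moments built into the topology of $\Rmc^W$ makes the associated deviation functional continuous on $\Xi$, so the identity persists for $Q^*$. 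The absolute continuity $Q^*_{(4)} \ll \theta$ follows from (a) together with lower semicontinuity of $R(\cdot\|\theta)$, Jensen's inequality, and \eqref{eq_kill:tightness_assumption}: $\Ebf^* R(\nu^*\|\theta) \le C_0 < \infty$. The hard part is property (1), that $\bbd$ is a standard $\Gmc_t$-Brownian motion under $Q^*$, and I would verify it via the martingale problem. For $f \in \Cmb_b^2(\Rd)$, rationals $0 \le s < t \le T$, and any bounded continuous $\Gmc_s$-measurable $h$ on $\Xi$, define the bounded continuous functional
\begin{equation*}
\Phi(Q) \doteq \int_\Xi \left[f(\bbd_t) - f(\bbd_s) - \tfrac{1}{2}\int_s^t \Delta f(\bbd_u)\,du\right] h\, dQ
\end{equation*}
on $\Pmc(\Xi)$. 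Under $Q^n$ this equals $\frac{1}{n}\sum_i \Mmc_i^n h_i^n$, where $\Mmc_i^n$ is the mean-zero $\Fmchat_t$-martingale increment of $f(\betabd_i^n)$ on $[s,t]$ and $h_i^n$ is $\Fmchat_s$-measurable (using that $\betabd_i^n|_{[0,s]}$, $\betabdtil_i^n|_{[0,s]}$, $\rhobd_i^n|_{\Rd\times[0,s]}$, and $\Sbd_i^n$ are all $\Fmchat_s$-measurable). Since the $\{\betabd_i^n\}$ are independent components of an $nd$-dimensional $\Fmchat_t$-Brownian motion, the terms $\Mmc_i^n h_i^n$ have zero mean and are pairwise uncorrelated, giving $\Ebfbar[\Phi(Q^n)^2] = O(1/n) \to 0$. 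By continuity and boundedness of $\Phi$, $\Phi(Q^*) = 0$ a.s., and running $(f, s, t, h)$ through a countable determining family completes the verification.

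Finally (c) is immediate: both terms in $\Jmc(Q^*)$ were just shown to be finite $\Pbd^*$-a.s. The main obstacle is the martingale problem step in (b)(1); the key enabling feature is that the filtration $\Fmchat_t$ built into $\Upsilon_{\Pi^n}$ preserves joint independence of the Brownian components $\{\betabd_i^n\}$ across $i$, which supplies the $L^2$ decorrelation needed to force the martingale identity to hold $Q^*$-almost surely rather than merely in expectation.
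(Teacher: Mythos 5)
Your proposal follows essentially the same route as the paper's proof: filling in the standard martingale/$L^2$ argument for $Q^*_{(4)}=\nu^*$ (which the paper only references via Sanov), lower semicontinuity plus Fatou for condition (2), continuity of the deviation functional for (3), the relative-entropy bound for $Q^*_{(4)}\ll\theta$, and the $L^2$-decorrelation/martingale-problem argument for (1). The one place to tighten is property (3): the deviation functional $\xi \mapsto \btil_t - b_t - \int y\,\rho(dy\,ds)$ is continuous but unbounded, so to pass the identity $\int\phi\,dQ^n = 0$ to the limit $Q^*$ you need a bounded surrogate --- the paper uses $|\cdot|\wedge 1$ --- rather than appealing directly to weak convergence.
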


\begin{proof}
	The first statement, namely $Q^*_{(4)} = \nu^*$ a.s.\ $\Pbd^*$, can be proved using a standard martingale argument as in the proof of Sanov's theorem (see e.g.~\cite[Theorem 2.2.1]{DupuisEllis2011weak}). We omit the details.
	
	In order to check the second statement that $Q^* \in \Pmc_{\infty}$ a.s.\ $\Pbd^*$, we need to verify that, a.s., 
	$Q^*_{(4)} \ll \theta$ 
	and under $Q^*$, properties (1)--(3) of Section~\ref{sec_kill:rate function LDP} are satisfied (with $\Theta$ replaced by $Q^*$).
	Without loss of generality assume that $(Q^n,\nu^n)$ converges weakly to $(Q^*,\nu^*)$ along the whole sequence.  
	Recall the canonical variables $(\bbdtil, \bbd, \rhobd, \sigmabdtil)$ and the canonical filtration $\{\Gmc_t\}$ introduced in Section~\ref{sec_kill:canonical}. 
	We can find a countable collection $\{\eta_j\}_{j=1}^{\infty}$ of continuous nonnegative functions with compact support in $\Rmb^d$, such that, denoting
	\begin{equation*}
		\rhobd^{\eta_j}_t(\xi) \doteq \int_{\Rd\times[0,t]} \eta_j(y) \, \rho(dy\,ds), \; t \in [0,T], \; j \in \Nmb, \; \xi = (\btil, b, \rho, \sigmatil) \in \Xi
	\end{equation*}
	and defining the stochastic process
	\begin{equation*}
		\Emc(t) \doteq (\bbdtil_t, \bbd_t, (\rhobd^{\eta_j}_t)_{j \in \Nmb}, \sigmabdtil), \; t \in [0,T]
	\end{equation*}
	with sample paths in $\Cmc_{\infty} \doteq \Cmb([0,T]:\Rmb^{\infty})$, we have $\Gmc_t = \sigma\{\Emc(\cdot \wedge t)\}$ for $t\in [0,T]$.

	We first verify (1). 
	It suffices to check that for all $0\le s \le t \le T$, $g \in \Cmb_b^2(\Rmb^d)$ and $f \in \Cmb_b(\Cmc_{\infty})$,
	\begin{equation} 
		\label{eq_kill:check1}
		\Ebf^*\left| \Ebf^{Q^*} f(\Emc(\cdot \wedge s)) \left(g(\bbd_t)-g(\bbd_s) - \frac{1}{2} \int_s^t \Delta g(\bbd_u) \, du \right) \right|^2 =0.
	\end{equation}
	Since $Q^n$ converges weakly to $Q^*$ the left side equals
	\begin{align*}
		& \lim_{n\to \infty} \Ebfbar \left| \Ebf^{Q^n} f(\Emc(\cdot \wedge s)) \left(g(\bbd_t)-g(\bbd_s) - \frac{1}{2} \int_s^t \Delta g(\bbd_u) \, du \right) \right|^2 \\
		& \quad = \lim_{n\to \infty} \frac{1}{n^2} \Ebfbar \left| \sum_{i=1}^n f(\Emc^n_i(\cdot \wedge s))\left( g(\betabd_i^n(t))- g(\betabd_i^n(s)) - \frac{1}{2} \int_s^t \Delta g(\betabd_i^n(u)) \, du\right)\right|^2,
	\end{align*}
	where $\Emc^n_i$ is defined similarly to $\Emc$ by replacing $(\bbdtil,\bbd,\rhobd,\sigmabdtil)$ with $(\betabdtil_i^n, \betabd_i^n, \rhobd_i^n, \Sbd_i^n)$.
	Conditioning on $\Fmchat_s$
	and using the fact that $\betabd^n$ is a standard $\Fmchat_t$-Brownian motion, we see that cross product terms do not contribute when the above squared sum is written as a double sum.
	So the above limit is $0$ which proves \eqref{eq_kill:check1}.
	
	Consider now (2).  
	It suffices to show that
	\begin{equation} 
		\label{eq_kill:check2_0}
		\Ebf^* \Ebf^{Q^*}\int_{\Rmb^d\times[0,T]} \|y\|^2 \, \rhobd(dy\,dt) < \infty.
	\end{equation}
	Note that $Q^n \Rightarrow Q^*$ as $n \to \infty$ and
	\begin{equation*}
		\Ebfbar \Ebf^{Q^n}\int_{\Rmb^d\times[0,T]} \|y\|^2\, \rhobd(dy\,dt) = \frac{1}{n} \sum_{i=1}^n \Ebfbar \int_0^T \|\psibd^n_i(t)\|^2 \,dt \le 2C_0.
	\end{equation*}
	Thus in order to prove \eqref{eq_kill:check2_0}, it suffices to show that the function
	\begin{equation}
		\label{eq_kill:check2}
		Q \mapsto \Ebf^Q\int_{\Rmb^d\times[0,T]} \|y\|^2 \, \rhobd(dy\,dt)
	\end{equation}
	is a lower semi-continuous function from $\Pmc(\Xi)$ to $[0,\infty]$.
	This is immediate from Fatou's lemma on observing that the function
	\begin{equation*}
		\xi = (\btil,b,\rho,\sigmatil) \mapsto \int_{\Rd \times [0,T]} \|y\|^2 \, \rho(dy\,ds)
	\end{equation*}
	is a lower semi-continuous function from $\Xi$ to $[0,\infty]$.
	This proves (2).
			
	Next we verify (3).  
	It suffices to show that for each $t \in [0,T]$
	\begin{equation} 
		\label{eq_kill:check3}
		\Ebf^* \Ebf^{Q^*} \left(\left| \ti \bbd_t - \bbd_t - \int_{\Rmb^d \times [0,t]} y \,\rhobd(dy\,ds) \right| \wedge 1 \right) = 0.
	\end{equation}
	Note that for each $t \in [0,T]$, the function
	\begin{equation}
		\label{eq_kill:check3_1}
		\xi = (\btil,b,\rho,\sigmatil) \mapsto \left| \btil_t - b_t - \int_{\Rmb^d \times [0,t]} y \,\rho(dy\,ds) \right| \wedge 1
	\end{equation}
	is a continuous and bounded function from $\Xi$ to $\Rmb_+$.
	Also, from \eqref{eq_kill:betabdtil^n},
	\begin{align*}
		& \Ebfbar \Ebf^{Q^n} \left(\left| \ti \bbd_t - \bbd_t - \int_{\Rmb^d \times [0,t]} y \,\rhobd(dy\,ds) \right| \wedge 1 \right) \\
		& \quad = \frac{1}{n} \sum_{i=1}^n \Ebfbar \left( \left | \betabdtil_i^n(t) - \betabd_i^n(t) - \int_0^t \psibd_i^n(s) \,ds \right| \wedge 1\right) = 0.
	\end{align*}
	Combining the above two observations and recalling that $Q^n \Rightarrow Q^*$ we have \eqref{eq_kill:check3}.
	
	Next we show that $Q^*_{(4)} \ll \theta$ a.s.
	By lower semi-continuity and convexity of the function $R(\cdot \| \theta)$, we have on using the first statement of the lemma that 
	\begin{equation}
		\label{eq_kill:check4}
		\Ebf^* R(Q^*_{(4)} \| \theta) = \Ebf^* R(\nu^* \| \theta) \le \liminf_{n \to \infty} \Ebfbar R(\nu^n \| \theta) \le \liminf_{n \to \infty} \Ebfbar \left[\frac{1}{n} \sum_{i=1}^n R(\nubd_i^n \| \theta)\right] \le C_0.
	\end{equation}
	So $R(Q^*_{(4)} \| \theta) < \infty$ and consequently $Q^*_{(4)} \ll \theta$ a.s.\ $\Pbd^*$.
	
	Finally, the third statement, namely $\Jmc(Q^*)<\infty$ a.s., follows immediately on combining \eqref{eq_kill:check2_0} and \eqref{eq_kill:check4}.
%
\end{proof}

For the following two lemmas, define
\begin{equation*}
	\|\betabdtil^n\|_{*,n}^2 \doteq \sup_{0 \le t \le T} \sum_{i=1}^n \|\betabdtil_i^n(t)\|^2.
\end{equation*}
From Cauchy--Schwarz and Doob's inequalities we have for all $n \in \mathbb{N}$
\begin{align}
	\frac{1}{n} \Ebfbar \|\betabdtil^n\|_{*,n}^2 & \le \frac{2}{n} \Ebfbar \sup_{0 \le t \le T} \sum_{i=1}^n \left( \|\betabd_i^n(t)\|^2 + \left\| \int_0^t \psibd_i^n(s)\,ds \right\|^2 \right) \notag \\
	& \le 8Td + \frac{2T}{n} \Ebfbar \sum_{i=1}^n \int_0^T \|\psibd_i^n(s)\|^2\,ds \notag \\
	& \le 8Td + 4TC_0 \doteq \Ctil_0, \label{eq_kill:beta_bd}
\end{align}
where the last line follows from \eqref{eq_kill:tightness_assumption}.
Using this moment bound we will now argue tightness of $\{\mutil^n\}$.

\begin{lemma}
	\label{lem_kill:tightness_2}
	Suppose \eqref{eq_kill:tightness_assumption} holds. 
	The sequence of random variables $\{\mutil^n\}_{n \ge 1}$ is tight in $\Dmc$.
\end{lemma}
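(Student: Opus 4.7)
The plan is to verify Jakubowski's tightness criterion for $\Dmc = \Dmb([0,T]:\Mmc(\Rd))$: (J1) a compact containment condition, and (J2) tightness of $\{\langle f, \mutil^n\rangle\}_n$ in $\Dmb([0,T]:\Rmb)$ for every $f$ in a point-separating subset of $\Cmb_b(\Rd)$. For (J1), the set $K_M \doteq \{\nu \in \Mmc(\Rd): \int \|x\|^2 \nu(dx) \le M\}$ is compact in $\Mmc(\Rd)$ by Prokhorov; since $\int \|x\|^2 \mutil^n(t)(dx) \le \|\betabdtil^n\|_{*,n}^2/n$ for every $t$, Markov's inequality together with the moment bound \eqref{eq_kill:beta_bd} gives $\Pbdbar(\mutil^n(t) \in K_M \text{ for all } t \in [0,T]) \ge 1 - \Ctil_0/M$, uniformly in $n$, which yields compact containment on letting $M \to \infty$.

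For (J2), take $\mathcal{F} = \Cmb_b^2(\Rd)$, which separates points of $\Mmc(\Rd)$. Fix $f \in \mathcal{F}$; tightness of one-dimensional marginals of $\langle f, \mutil^n\rangle$ is immediate from $|\langle f, \mutil^n(t)\rangle| \le \|f\|_\infty$. I would verify Aldous' criterion by taking $\Fmchat_t$-stopping times $\tau_n \le \tau_n' \le (\tau_n + \delta_n) \wedge T$ with $\delta_n \downarrow 0$ and decomposing
\begin{equation*}
\langle f, \mutil^n(\tau_n')\rangle - \langle f, \mutil^n(\tau_n)\rangle = A_n + B_n,
\end{equation*}
where, writing $H_n(t) \doteq \int_0^t \langle \zeta, \mutil^n(u)\rangle du$,
\begin{align*}
A_n &\doteq \frac{1}{n}\sum_{i=1}^n \bigl[f(\betabdtil_i^n(\tau_n')) - f(\betabdtil_i^n(\tau_n))\bigr] \one_{\{\Sbd_i^n > H_n(\tau_n')\}},\\
B_n &\doteq \frac{1}{n}\sum_{i=1}^n f(\betabdtil_i^n(\tau_n)) \bigl[\one_{\{\Sbd_i^n > H_n(\tau_n')\}} - \one_{\{\Sbd_i^n > H_n(\tau_n)\}}\bigr].
\end{align*}
The motion term $A_n$ is handled by applying Itô's formula to $f(\betabdtil_i^n(\cdot))$ on $[\tau_n, \tau_n']$, producing martingale, $\nabla f \cdot \psibd_i^n$ drift, and $\half\Delta f$ contributions; Cauchy--Schwarz together with the control bound in \eqref{eq_kill:tightness_assumption} yields $\Ebfbar|A_n| = O(\sqrt{\delta_n}) \to 0$.

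The killing term $B_n$ is the main obstacle. Bound $|B_n| \le \|f\|_\infty [\mutil^n(\tau_n)(\Rd) - \mutil^n(\tau_n')(\Rd)]$, and note that on the high-probability event $E_M \doteq \{\|\betabdtil^n\|_{*,n}^2/n \le M\}$ the sub-quadratic growth \eqref{eq_kill:zeta_2} forces $H_n(\tau_n') - H_n(\tau_n) \le \Ctil_\zeta(1+M)\delta_n$, so that
\begin{equation*}
\mutil^n(\tau_n)(\Rd) - \mutil^n(\tau_n')(\Rd) = \frac{1}{n}\sum_{i=1}^n \one_{\{H_n(\tau_n) < \Sbd_i^n \le H_n(\tau_n')\}}
\end{equation*}
is the fraction of $\Sbd_i^n$'s falling in a random interval of length at most $\Ctil_\zeta(1+M)\delta_n$. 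The difficulty is the circular dependence of this interval on the $\Sbd_i^n$'s themselves. I would control the expectation by exploiting $\nubd_i^n \ll \theta$ (see Remark~\ref{rmk_kill:Pmc_infty}) together with a Donsker--Varadhan-type estimate of the form $\nubd_i^n(J) \le \kappa(R(\nubd_i^n\|\theta)+1)/\log(1/\theta(J))$, valid for any Borel $J \subset \Rmb_+$ (obtained by optimizing $\lambda \nubd_i^n(J) \le \log(1+(e^\lambda-1)\theta(J)) + R(\nubd_i^n\|\theta)$ over $\lambda > 0$), combined with a discretization of the range of $H_n$ on $E_M$ and the entropy bound in \eqref{eq_kill:tightness_assumption}. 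Letting $\delta_n \to 0$ and then $M \to \infty$ (the contribution from $E_M^c$ being $O(1/M)$ via \eqref{eq_kill:beta_bd}) completes Aldous' criterion and hence (J2), establishing tightness of $\{\mutil^n\}_{n \ge 1}$ in $\Dmc$.
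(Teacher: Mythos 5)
Your plan matches the paper's at the structural level: you decompose the increment of $\mutil^n$ into a motion term and a killing term, handle the motion term via Cauchy--Schwarz and the control bound, and then reduce the killing term, on the good event $E_M$, to the fraction of $\Sbd_i^n$'s that fall in a random interval of length $O(\delta)$. The compact containment argument and the treatment of the motion term are sound (Itô is actually more than you need: every $f\in\Cmb^2_b(\Rd)$ has bounded gradient, so the Lipschitz bound $|f(x)-f(y)|\le\|\nabla f\|_\infty\|x-y\|$ plus Cauchy--Schwarz already gives $O(\sqrt{\delta_n})$, which is what the paper does with $\|f\|_{BL}\le 1$).

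The gap is in the killing term. After discretizing, you need to control
\begin{equation*}
\Ebfbar \one_{E_M} B_n \ \le\ \|f\|_\infty\, \Ebfbar \max_{j\in\Nmb_0} Q^n_{(4)}\bigl([jC_M\delta_n,(j+2)C_M\delta_n]\bigr),
\end{equation*}
but the Donsker--Varadhan bound you quote applies to the conditional laws $\nubd_i^n$ (or to their average $\nu^n=\frac1n\sum_i\nubd_i^n$), not to the empirical measure $Q^n_{(4)}=\frac1n\sum_i\delta_{\Sbd_i^n}$. Indeed $Q^n_{(4)}$ is purely atomic, so $R(Q^n_{(4)}\|\theta)=\infty$ and the variational bound degenerates there. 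The identity $\Ebfbar Q^n_{(4)}(J_j)=\frac1n\sum_i\Ebfbar\nubd_i^n(J_j)$ is fine for each \emph{fixed} $j$, but $\Ebfbar$ and $\max_j$ do not commute, and a naive union bound over the $\Theta(1/\delta_n)$ intervals $J_j$ destroys the estimate. So, as written, the DV step does not close the argument.

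Two ways to fill the hole. (a) \emph{The paper's route:} do not aim for a uniform-in-$n$ bound. Using Lemma~\ref{lem_kill:tightness_1} and Lemma~\ref{lem_kill:char_limit_1}, pass to a subsequence with $Q^n\Rightarrow Q^*$, where $Q^*_{(4)}\ll\theta$ a.s.\ and $\Ebf^* R(Q^*_{(4)}\|\theta)\le C_0$. A deterministic continuity fact for maxima over lattice intervals (\eqref{eq:eq1147}, valid because the limit is non-atomic) gives $\limsup_n\Ebfbar\max_j Q^n_{(4)}(J_j)=\Ebf^*\max_j Q^*_{(4)}(J_j)$, and the DV bound can then be applied \emph{pointwise to the limit}, uniformly in $j$, giving the estimate \eqref{eq:eq1209}. (b) \emph{A fix for your finite-$n$ route:} split $\max_j Q^n_{(4)}(J_j)\le\max_j\nu^n(J_j)+\max_j|Q^n_{(4)}(J_j)-\nu^n(J_j)|$. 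The first term is handled by DV and Jensen as you intend (since $R(\nu^n\|\theta)\le\frac1n\sum_i R(\nubd_i^n\|\theta)$, and the bound is uniform over $j$). The second term is a martingale fluctuation: $\one_{\{\Sbd_i^n\in J_j\}}-\nubd_i^n(J_j)$ is a martingale-difference array, so Azuma--Hoeffding gives $\Pbdbar(|Q^n_{(4)}(J_j)-\nu^n(J_j)|>t)\le 2e^{-nt^2/2}$, and a union bound over the $O(1/\delta_n)$ intervals then makes this term $o(1)$ as $n\to\infty$ for each fixed $\delta$. You would need to add this concentration step explicitly; without it, the proposal does not establish the required bound on the killing term.
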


\begin{proof}
	We first argue tightness of $\{\mutil^n(t)\}$ in $\Mmc(\Rd)$ for each $t \in [0,T]$.
	Note that $$G_4(m) \doteq \int_\Rd \|x\|^2 \, m(dx), \quad m \in \Mmc(\Rd)$$ is a tightness function on $\Mmc(\Rd)$.
	Also from \eqref{eq_kill:beta_bd} we have
	\begin{align*}
		\Ebfbar G_4(\mutil^n(t)) & = \Ebfbar \left[\frac{1}{n} \sum_{i=1}^n \|\betabdtil_i^n(t)\|^2 \one_{\left\{\Sbd_i^n > \int_0^t \lan\zeta,\,\mutil^n(s)\ran\, ds\right\}}\right] \le \frac{1}{n} \Ebfbar \|\betabdtil^n\|_{*,n}^2 \le \Ctil_0.
	\end{align*}
	Thus $\{\mutil^n(t)\}$ is tight in $\Mmc(\Rd)$ for each $t \in [0,T]$.
	
	Next we consider fluctuations of $\mutil^n$.  Recall that $\mutil^n$ is defined on some system $\Upsilon_{\Pi^n}$.
	For $\delta \in [0,T]$, let $\Tmc^{\delta,n}$ be the collection of all $[0,T-\delta]$-valued stopping times $\tau$ on $\Upsilon_{\Pi^n}$.
	In order to argue tightness of $\{\mutil^n\}$ in $\Dmc$, by Aldous-Kurtz tightness criterion (cf.~\cite[Theorem 2.7]{Kurtz1981approximation}), it suffices to show that
	\begin{equation}
		\label{eq_kill:tightness_2_fluctuation}
		\limsup_{\delta \to 0} \limsup_{n \to \infty} \sup_{\tau \in \Tmc^{\delta,n}} \Ebfbar d_{BL}(\mutil^n(\tau+\delta), \mutil^n(\tau)) = 0.
	\end{equation}
	Note that 
	\begin{align*}
		& d_{BL}(\mutil^n(\tau+\delta), \mutil^n(\tau)) \\
		& \:\:\:\:\:\: = \sup_{\|f\|_{BL} \le 1} | \lan f,\, \mutil^n(\tau+\delta) \ran - \lan f,\, \mutil^n(\tau) \ran | \\
		& \:\:\:\:\:\: \le \sup_{\|f\|_{BL} \le 1} \frac{1}{n} \sum_{i=1}^n \left|  f(\betabdtil_i^n(\tau+\delta)) \one_{\left\{\Sbd_i^n > \int_0^{\tau+\delta} \lan\zeta,\,\mutil^n(s)\ran\,ds\right\}} - f(\betabdtil_i^n(\tau)) \one_{\left\{\Sbd_i^n > \int_0^{\tau} \lan\zeta,\,\mutil^n(s)\ran\,ds\right\}} \right| \\
		& \:\:\:\:\:\: \le \sup_{\|f\|_{BL} \le 1} \frac{1}{n} \sum_{i=1}^n \left| f(\betabdtil_i^n(\tau+\delta)) - f(\betabdtil_i^n(\tau)) \right| \one_{\left\{\Sbd_i^n > \int_0^{\tau+\delta} \lan\zeta,\,\mutil^n(s)\ran\,ds\right\}} \\
		& \qquad + \sup_{\|f\|_{BL} \le 1} \frac{1}{n} \sum_{i=1}^n \left| f(\betabdtil_i^n(\tau)) \left( \one_{\left\{\Sbd_i^n > \int_0^{\tau+\delta} \lan\zeta,\,\mutil^n(s)\ran\,ds\right\}} - \one_{\left\{\Sbd_i^n > \int_0^{\tau} \lan\zeta,\,\mutil^n(s)\ran\,ds\right\}} \right) \right| \\
		& \:\:\:\:\:\: \doteq \Tmc_1^n + \Tmc_2^n.
	\end{align*}
	It then suffices to show that for $j=1,2$,
	\begin{equation}
		\label{eq_kill:tightness_T}
		\limsup_{\delta \to 0} \limsup_{n \to \infty} \sup_{\tau \in \Tmc^{\delta,n}} \Ebfbar \Tmc_j^n = 0.
	\end{equation}	
	
	For $\Tmc_1^n$, we have
	\begin{align*}
		\Tmc_1^n & \le \frac{1}{n} \sum_{i=1}^n \left\| \betabdtil_i^n(\tau+\delta) - \betabdtil_i^n(\tau) \right\| \\
		& \le \frac{1}{n} \sum_{i=1}^n \left( \|\betabd_i^n(\tau+\delta) - \betabd_i^n(\tau)\| + \int_{\tau}^{\tau+\delta} \|\psibd_i^n(s)\| \, ds \right).
	\end{align*}
	So by Cauchy--Schwarz inequality,
	\begin{align*}
		& \sup_{\tau \in \Tmc^{\delta,n}} \Ebfbar \Tmc_1^n \\
		& \quad \le \sup_{\tau \in \Tmc^{\delta,n}} \left( \Ebfbar \frac{1}{n} \sum_{i=1}^n \left\| \betabd_i^n(\tau+\delta) - \betabd_i^n(\tau) \right\|^2 \right)^\half + \sqrt{\delta} \left( \Ebf \frac{1}{n} \sum_{i=1}^n \int_0^T \|\psibd_i^n(s)\|^2 \, ds \right)^\half \\
		& \quad \le \sqrt{d\delta} + \sqrt{2C_0\delta},
	\end{align*}	
	which implies that \eqref{eq_kill:tightness_T} holds for $j=1$.
	
	Now consider $\Tmc^n_2$.
	Fix $K \in (0,\infty)$.
	Since $\|f\|_\infty \le 1$,
	\begin{align*}
		\Tmc^n_2 & \le \frac{1}{n} \sum_{i=1}^n \one_{\left\{\int_0^{\tau} \lan\zeta,\,\mutil^n(s)\ran\,ds < \Sbd_i^n \le \int_0^{\tau+\delta} \lan\zeta,\,\mutil^n(s)\ran\,ds\right\}} \\
		& \le \one_{\{ \|\betabdtil^n\|_{*,n}^2 > nK \}} + \one_{\{ \|\betabdtil^n\|_{*,n}^2 \le nK \}} Q^n_{(4)} \left( \left( \int_0^{\tau} \lan\zeta,\,\mutil^n(s)\ran\,ds, \int_0^{\tau+\delta} \lan\zeta,\,\mutil^n(s)\ran\,ds \right] \right) \\
		& \doteq \Tmc^n_{2,1} + \Tmc^n_{2,2}.
	\end{align*}
	For $\Tmc^n_{2,1}$, from \eqref{eq_kill:beta_bd} it follows that
	\begin{equation}
	\label{eq_kill:T_2_1}
		\Ebfbar \Tmc^n_{2,1} \le \frac{\Ebfbar \|\betabdtil^n\|_{*,n}^2}{nK} \le \frac{\Ctil_0}{K}.
	\end{equation}
	For $\Tmc^n_{2,2}$, note that on the set $\{ \|\betabdtil^n\|_{*,n}^2 \le nK \}$, we have from \eqref{eq_kill:zeta_2} that for $t \in [0,T]$,
	\begin{align}
		\lan \zeta,\,\mutil^n(t) \ran &\le \frac{1}{n} \sum_{i=1}^n \zeta(\betabdtil^n_i(t)) \le \frac{1}{n} \sum_{i=1}^n \Ctil_\zeta (1 + \|\betabdtil^n_i(t)\|^2) \le \Ctil_\zeta + \frac{\Ctil_\zeta \|\betabdtil^n\|_{*,n}^2}{n} \notag\\
		&\le \Ctil_\zeta (1+K), \label{eq:eq1144}
	\end{align}
	and hence
	\begin{align*}
		\left| \int_0^{\tau+\delta} \lan\zeta,\,\mutil^n(s)\ran\,ds - \int_0^{\tau} \lan\zeta,\,\mutil^n(s)\ran\,ds \right| \le C_K \delta
	\end{align*}
	with $C_K = \Ctil_\zeta (1+K)$.
	From this it follows that
	\begin{equation*}
		\Tmc^n_{2,2} \le \one_{\{ \|\betabdtil^n\|_{*,n}^2 \le nK \}} \max_{j \in \Nmb_0} Q^n_{(4)}([j C_K \delta, (j+2) C_K \delta]).
	\end{equation*}
	Using this we have
	\begin{equation}
		\label{eq_kill:tightness_T_2}
		\sup_{\tau \in \Tmc^{\delta,n}} \Ebfbar \Tmc^n_{2,2} \le \Ebfbar \max_{j \in \Nmb_0} Q^n_{(4)}([j (C_K \delta, (j+2) C_K \delta]).
	\end{equation}
	By Lemmas~\ref{lem_kill:tightness_1} and~\ref{lem_kill:char_limit_1}, we can assume without loss of generality that $(Q^n,\nu^n)$ converges weakly along the whole sequence to $(Q^*,\nu^*)$, given on some probability space $(\Omega^*, \Fmc^*, \Pbd^*)$, with $Q^* \in \Pmc_\infty$ a.s.\ $\Pbd^*$.
	Note that if $\{m_n\}, m$ are probability measures on $[0, \infty)$ such that $m$ is absolutely continuous with respect to the Lebesgue measure, $m_n\to m$ as $n\to \infty$, and $\delta, c >0$, then
	\begin{align}
		\max_{j\in \Nmb_0} m_n ([j\delta, j\delta +c]) \to 	\max_{j\in \Nmb_0} m ([j\delta, j\delta +c]). \label{eq:eq1147}
	\end{align}
	It then follows from \eqref{eq_kill:tightness_T_2}, dominated convergence theorem, \eqref{eq:eq1147} and Remark \ref{rmk_kill:Pmc_infty} that 
	\begin{align*}
		\limsup_{n \to \infty} \sup_{\tau \in \Tmc^{\delta,n}} \Ebfbar \Tmc_2^n & \le \limsup_{n \to \infty} \Ebfbar \max_{j \in \Nmb_0} Q^n_{(4)}([j C_K \delta, (j+2) C_K \delta]) \\
		&= \Ebf^* \max_{j \in \Nmb_0} Q^*_{(4)}([j C_K \delta, (j+2) C_K \delta]).
	\end{align*}
	Using \eqref{eq_kill:check4}, for every $M>1$,
	\begin{equation}\Ebf^* \max_{j \in \Nmb_0} Q^*_{(4)}([j C_K \delta, (j+2) C_K \delta]) \le 2M C_K \delta + \frac{C_0}{\log M}. \label{eq:eq1209} \end{equation}
		This shows that the left side in the above equation converges to $0$ as $\delta \to 0$ and $M \to \infty$.
	Combining this convergence with \eqref{eq_kill:T_2_1} and then sending $K \to \infty$ implies that \eqref{eq_kill:tightness_T} also holds for $j=2$ which completes the proof of tightness of $\{\mutil^n\}$.
\end{proof}

The following lemma gives an important characterization of weak limit points of $\{\mutil^n\}$.
Recall the definition of $\varpi_\Theta$ for $\Theta \in \Pmc_\infty^1$ given in Section~\ref{sec_kill:rate function LDP}.

\begin{lemma} 
	\label{lem_kill:char_limit_2}
	Suppose \eqref{eq_kill:tightness_assumption} holds and $(Q^n,\nu^n,\mutil^n)$ converges along a subsequence, in distribution, to $(Q^*,\nu^*,\mutil^*)$ given on some probability space $(\Omega^*,\Fmc^*,\Pbd^*)$.
	Then $\Pbd^*$-a.s., $Q^* \in \Pmc_\infty^1$ and $\mutil^* = \varpi_{Q^*}$, i.e.\
	\begin{equation}
		\label{eq_kill:char_limit_2}
		\lan f,\,\mutil^*(t) \ran = \Ebf^{Q^*} \left[ f(\bbdtil_t) \one_{\left\{\sigmabdtil > \int_0^t \lan\zeta,\,\mutil^*(s)\ran \, ds \right\}} \right], \; \forall f \in \Cmb_b(\Rd), t \in [0,T].
	\end{equation}
\end{lemma}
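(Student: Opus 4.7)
The plan is to pass to the limit $n \to \infty$ in the identity
\begin{equation*}
\lan f,\,\mutil^n(t)\ran = \Ebf^{Q^n}\left[f(\bbdtil_t)\,\one_{\{\sigmabdtil > h^n(t)\}}\right], \quad h^n(t) \doteq \int_0^t \lan\zeta,\,\mutil^n(s)\ran\,ds,
\end{equation*}
which follows directly from the definitions of $\mutil^n$ in \eqref{eq_kill:mutil^n} and $Q^n$ in \eqref{eq_kill:Q^n_general}. Via Skorokhod's representation theorem, and after passing to a further subsequence, I may assume that $(Q^n, \nu^n, \mutil^n)$ together with the scalar random variable $W_n \doteq \|\betabdtil^n\|_{*,n}^2/n$, which is tight in $L^1$ by \eqref{eq_kill:beta_bd}, converges almost surely to $(Q^*, \nu^*, \mutil^*, W)$ on some probability space, where $W$ is a.s.\ finite. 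Lemma \ref{lem_kill:char_limit_1} then gives $Q^* \in \Pmc_\infty$ a.s., so in particular $Q^*_{(4)}$ is absolutely continuous with respect to Lebesgue measure and $\Jmc(Q^*)<\infty$ a.s.

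Next I would upgrade the Skorokhod convergence of $\mutil^n$ to uniform convergence. Pathwise, $d_{BL}$-jumps of $\mutil^n$ are bounded by $1/n$ since each discontinuity kills exactly one particle; combined with the fluctuation estimate from the proof of Lemma \ref{lem_kill:tightness_2} this forces $\mutil^* \in \Cmb([0,T]:\Mmc(\Rd))$ a.s., and hence $\sup_{t \in [0,T]} d_{BL}(\mutil^n(t),\mutil^*(t)) \to 0$ a.s. Using the sub-quadratic growth bound \eqref{eq_kill:zeta} together with the pathwise control $\int_\Rd \|x\|^2 \, \mutil^n(s)(dx) \le W_n$ (uniform in $s$ and, by the a.s.\ convergence of $W_n$, uniformly bounded in $n$), a truncation argument with $\zeta_M \doteq \zeta \wedge M$ yields $\lan\zeta,\,\mutil^n(s)\ran \to \lan\zeta,\,\mutil^*(s)\ran$ a.s.\ for each $s$. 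Dominated convergence then gives $h^n(t) \to h^*(t) \doteq \int_0^t \lan\zeta,\,\mutil^*(s)\ran\,ds$ a.s.\ for each $t$.

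To pass to the limit on the right-hand side, I would decompose
\begin{equation*}
\Ebf^{Q^n}\bigl[f(\bbdtil_t)\one_{\{\sigmabdtil > h^n(t)\}}\bigr] - \Ebf^{Q^*}\bigl[f(\bbdtil_t)\one_{\{\sigmabdtil > h^*(t)\}}\bigr] = T_1^n + T_2^n,
\end{equation*}
where $T_1^n$ replaces $h^n(t)$ by $h^*(t)$ inside $\Ebf^{Q^n}$ and $T_2^n$ then replaces $Q^n$ by $Q^*$. For any $\eps>0$ and all large $n$, $|T_1^n| \le \|f\|_\infty Q^n_{(4)}((h^*(t)-\eps, h^*(t)+\eps])$; weak convergence $Q^n_{(4)} \Rightarrow Q^*_{(4)}$ combined with absolute continuity of $Q^*_{(4)}$ drives this to zero on sending $n \to \infty$ and then $\eps \to 0$. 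For $T_2^n$, the bounded integrand on $\Xi$ has its set of discontinuities contained in $\{\sigmabdtil = h^*(t)\}$, which has $Q^*$-measure zero by the same absolute continuity, so the portmanteau theorem yields $T_2^n \to 0$. This proves \eqref{eq_kill:char_limit_2} $\Pbd^*$-almost surely for each fixed $(t,f)$; separability (a countable $\|\cdot\|_{BL}$-dense subset of $\Cmb_b(\Rd)$ and joint continuity in $t$ of both sides) upgrades it to a simultaneous a.s.\ identity over all $t \in [0,T]$ and $f \in \Cmb_b(\Rd)$.

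Once \eqref{eq_kill:char_limit_2} is in hand, $\mutil^*$ is a measurable solution of \eqref{eq_kill:varpi_Theta} with $\Theta=Q^*$, and Lemma \ref{lem_kill:uniqueness} together with $\Jmc(Q^*)<\infty$ delivers $Q^*\in\Pmc_\infty^1$ with $\varpi_{Q^*}=\mutil^*$. I expect the principal obstacle to be the interplay between the unbounded test function $\zeta$ and the available moment bounds: the $L^1$ bound \eqref{eq_kill:beta_bd} alone does not give pathwise control of $\lan\zeta,\,\mutil^n(s)\ran$, and this is circumvented precisely by enlarging the Skorokhod representation to include $W_n$. A secondary obstacle is the discontinuity of $\xi \mapsto \one_{\{\sigmatil > h^*(t)\}}$, which is exactly where the structural property $Q^*_{(4)} \ll \theta$ inherited from $\Pmc_\infty$ (Remark \ref{rmk_kill:Pmc_infty}) becomes essential.
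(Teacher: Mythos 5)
Your proof is correct and follows the same overall strategy as the paper: pass to the limit in the prelimit identity $\lan f,\mutil^n(t)\ran = \Ebf^{Q^n}[f(\bbdtil_t)\one_{\{\sigmabdtil>h^n(t)\}}]$, splitting the error into a term replacing $h^n$ by $h^*$ inside $\Ebf^{Q^n}$ and a term replacing $Q^n$ by $Q^*$, and controlling both via absolute continuity of $Q^*_{(4)}$.

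Where you deviate is in how you establish $h^n(t)\to h^*(t)$. The paper proves (as a separately stated claim) that $\int_0^\cdot\lan\zeta,\mutil^n(s)\ran\,ds \Rightarrow \int_0^\cdot\lan\zeta,\mutil^*(s)\ran\,ds$ jointly with $(Q^n,\nu^n,\mutil^n)$ by a truncation estimate in $L^1$ — bounding $\sup_n\sup_t\Ebfbar|\int_0^t\lan\zeta-\zeta_K,\mutil^n(s)\ran\,ds|$ via \eqref{eq_kill:beta_bd} and Fatou for the limit — and then invokes Skorokhod representation for the enlarged tuple. You instead enlarge the Skorokhod coupling by the scalar $W_n=\|\betabdtil^n\|_{*,n}^2/n$ and run the truncation estimate pathwise, which is somewhat cleaner and avoids the separate ``claim'' structure. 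Similarly, for the term $T_1^n$ you bound by $\|f\|_\infty Q^n_{(4)}((h^*(t)-\eps,h^*(t)+\eps])$ directly and use portmanteau plus non-atomicity, whereas the paper bounds by $\max_{j\in\Nmb_0}Q^n_{(4)}([j\delta,(j+1)\delta])$ and invokes \eqref{eq:eq1147}; both devices exploit $Q^*_{(4)}\ll\theta$ in equivalent ways. One small imprecision: you say $\{W_n\}$ is ``tight in $L^1$'', but what you actually need and use is tightness in law on $\Rmb_+$ (from $\sup_n\Ebfbar W_n\le\Ctil_0$ and Markov), which suffices for Prokhorov/Skorokhod; and the separability step at the end is superfluous since the null set in your argument does not depend on $(t,f)$. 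Neither affects correctness.
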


\begin{proof}

	Since $R(\Pi^n \| \theta^{\otimes n}) < \infty$, we have that a.s., $\{\Sbd^n_i\}_{i=1}^n$ are distinct and hence, a.s., the total mass of $\mutil^n(t)$ decreases by at most $1/n$ at any time instant $t$.
	Consequently, $0 \le \mutil^n(t-)(A) - \mutil^n(t)(A) \le 1/n$ for all $A \in \Bmc(\Rd)$ and $t\in [0,T]$, a.s.
	Also, without loss of generality assume that $(Q^n,\nu^n,\mutil^n)$ converges weakly to $(Q^*,\nu^*,\mutil^*)$ along the whole sequence. 
	We claim that, jointly with the above collection,
	\begin{equation}
	\label{eq_kill:char_limit_2_claim}
		\int_0^{\cdot} \lan \zeta,\, \mutil^n(s) \ran\,ds \Rightarrow \int_0^{\cdot} \lan \zeta,\, \mutil^*(s) \ran\,ds \mbox{ in } \Cmb([0,T]:\Rmb),
	\end{equation}
	as $n \to \infty$.
	Suppose for now that the claim holds.
	By appealing to Skorokhod representation we can assume without loss of generality that $(Q^n,\nu^n,\mutil^n, 	\int_0^{\cdot} \lan \zeta,\, \mutil^n(s) \ran\,ds)$ convergences a.s.\ on $(\Omega^*,\Fmc^*,\Pbd^*)$.		
	From this and Lemma \ref{lem_kill:char_limit_1} we can find  $\Nmc \in \Fmc^*$ such that $\Pbd^*(\Nmc)=0$ and on $\Nmc^c$, we have that $(Q^n,\nu^n,\mutil^n, \int_0^{\cdot} \lan \zeta,\, \mutil^n(s) \ran\,ds) \to (Q^*,\nu^*,\mutil^*, \int_0^{\cdot} \lan \zeta,\, \mutil^*(s) \ran\,ds)$, $Q^*_{(4)}=\nu^*$, $Q^* \in \Pmc_\infty$, $\Jmc(Q^*)<\infty$ and $0 \le \mutil^n(t-)(A) - \mutil^n(t)(A) \le 1/n$ for all $A \in \Bmc(\Rd)$ and $t\in [0,T]$.
	The last statement implies that on $\Nmc^c$
	\begin{equation*}
		\sup_{t \in (0,1]} d_{BL}(\mutil^n(t),\mutil^n(t-)) = \sup_{t \in (0,1]} \sup_{\|f\|_{BL} \le 1} | \lan f,\mutil^n(t) \ran - \lan f,\mutil^n(t-) \ran | \le \frac{1}{n} \to 0
	\end{equation*}
	as $n \to \infty$ and hence $\mutil^* \in \Cmb([0,T]:\Mmc(\Rd))$ on $\Nmc^c$.
	Fix $f \in \Cmb_b(\Rd)$ and $t \in [0,T]$.
	Then, on $\Nmc^c$,  $\lan f,\,\mutil^n(t) \ran \to \lan f,\,\mutil^*(t) \ran$. 
	Define for $t\in [0,T]$, $g_t \colon \Pmc(\Xi) \times \Dmc \to \Rmb$ as
	\begin{equation}
		\label{eq_kill:char_limit_2_g}
		g_t(\Theta,\pi) = \Ebf^\Theta \left[ f(\bbdtil_t) \one_{\left\{\sigmabdtil > \int_0^t \lan\zeta,\,\pi(s)\ran \, ds \right\}} \right].
	\end{equation}	
	From \eqref{eq_kill:betabdtil^n} and \eqref{eq_kill:mutil^n} we have
		$\lan f,\,\mutil^n(t) \ran = g_t(Q^n,\mutil^n)$.

	Noting that the RHS of \eqref{eq_kill:char_limit_2} is $g_t(Q^*,\mutil^*)$, it then suffices to show that as $n \to \infty$, $g_t(Q^n,\mutil^n) \to g_t(Q^*,\mutil^*)$ in probability.
Note that
	\begin{equation}
	\label{eq_kill:char_limit_2_g0}
		|g_t(Q^n,\mutil^n) - g_t(Q^*,\mutil^*)| \le |g_t(Q^n,\mutil^n) - g_t(Q^n,\mutil^*)| + |g_t(Q^n,\mutil^*) - g_t(Q^*,\mutil^*)|.
	\end{equation}
	Since on $\Nmc^c$, $Q^* \in \Pmc_{\infty}$, by Remark~\ref{rmk_kill:Pmc_infty} we have on this set,
	\begin{equation}
		\label{eq_kill:char_limit_2_g1}
		\lim_{n \to \infty} |g_t(Q^n,\mutil^*) - g_t(Q^*,\mutil^*)| = 0.
	\end{equation}
	For the first term on the right hand side of \eqref{eq_kill:char_limit_2_g0}, we have, on $\Nmc^c$,
	\begin{align*}
		& |g_t(Q^n,\mutil^n) - g_t(Q^n,\mutil^*)| \\
		& \quad \le \Ebf^{Q^n} \left| f(\bbdtil_t) \left( \one_{\left\{\sigmabdtil > \int_0^t \lan\zeta,\,\mutil^n(s)\ran \, ds \right\}} - \one_{\left\{\sigmabdtil > \int_0^t \lan\zeta,\,\mutil^*(s)\ran \, ds \right\}} \right) \right| \\
		& \quad \le \|f\|_\infty Q^n \left( \sigmabdtil \mbox{ lies between } \int_0^t \lan\zeta,\,\mutil^n(s)\ran \, ds \mbox{ and } \int_0^t \lan\zeta,\,\mutil^*(s)\ran \, ds\right).
	\end{align*}
	From convergence of $\int_0^{\cdot} \lan \zeta,\, \mutil^n(s) \ran\,ds$ to $\int_0^{\cdot} \lan\zeta,\,\mutil^*(s)\ran \, ds$ and \eqref{eq:eq1144},
	it follows that for any $\delta > 0$,
	\begin{align*}
		& \limsup_{n \to \infty} |g_t(Q^n,\mutil^n) - g_t(Q^n,\mutil^*)| \\
		& \quad \le \|f\|_\infty \limsup_{n \to \infty} Q^n \left( \sigmabdtil \mbox{ lies between } \int_0^t \lan\zeta,\,\mutil^n(s)\ran \, ds \mbox{ and } \int_0^t \lan\zeta,\,\mutil^*(s)\ran \, ds\right) \\
		& \quad \le \|f\|_\infty \limsup_{n \to \infty} \max_{j \in \Nmb_0} Q^n_{(4)}([j\delta,(j+1)\delta]) \\
		& \quad = \|f\|_\infty \max_{j \in \Nmb_0} Q^*_{(4)}([j\delta,(j+1)\delta]),
	\end{align*}
	where the last equality uses \eqref{eq:eq1147}.
	Using a similar bound as in \eqref{eq:eq1209} and letting $\delta \to 0$ in the above display gives 
	 $\limsup_{n \to \infty} |g_t(Q^n,\mutil^n) - g_t(Q^n,\mutil^*)| = 0$ on $\Nmc^c$. 
	This together with \eqref{eq_kill:char_limit_2_g0} and \eqref{eq_kill:char_limit_2_g1} proves that on $\Nmc^c$, for all $t \in [0,T]$, $g_t(Q^n,\mutil^n) \to g_t(Q^*,\mutil^*)$ as $n \to \infty$ and hence $ \lan f,\,\mutil^*(t) \ran =  g_t(Q^*,\mutil^*)$. Therefore, $\mutil^* = \varpi_{Q^*}$ a.s.
	
	Now we prove the claim \eqref{eq_kill:char_limit_2_claim}.
	Fix $K \in (0,\infty)$ and define function $\zeta_K(x) \doteq \min \{ \zeta(x), K \}$ for $x \in \Rd$.
	Clearly as $n \to \infty$,
	\begin{equation}
	\label{eq_kill:char_limit_2_claim_1}
		\int_0^{\cdot} \lan \zeta_K,\, \mutil^n(s) \ran\,ds \Rightarrow \int_0^{\cdot} \lan \zeta_K,\, \mutil^*(s) \ran\,ds \mbox{ in } \Cmb([0,T]: \Rmb).
	\end{equation}	
	From \eqref{eq_kill:zeta} it follows that for all $x \in \Rd$,
	\begin{align}
		\label{eq_kill:zeta_K}
		\begin{aligned}
		\zeta(x) - \zeta_K(x) & \le \zeta(x) \one_{\{ \zeta(x) > K \}} \le \zeta(x) \left( \frac{\zeta(x)}{K} \right)^{2/p-1} \\
		& \le \frac{C_\zeta^{2/p} (1+\|x\|^p)^{2/p}}{K^{2/p-1}} \le \frac{\kappa_1(1+\|x\|^2)}{K^{2/p-1}}.
		\end{aligned}
	\end{align}
	So using \eqref{eq_kill:mutil^n} we have
	\begin{align*}
		& \sup_{n \in \Nmb} \sup_{0\le t \le T}\Ebfbar \left| \int_0^t \lan \zeta,\, \mutil^n(s) \ran\,ds - \int_0^t \lan \zeta_K,\, \mutil^n(s) \ran\,ds \right| \\
		& \quad \le \sup_{n \in \Nmb} \int_0^T \Ebfbar \lan \zeta - \zeta_K,\, \mutil^n(s) \ran\,ds \\
		& \quad \le \sup_{n \in \Nmb} \int_0^T \left(\Ebfbar \frac{1}{n} \sum_{i=1}^n \frac{\kappa_1(1+\|\betabdtil^n_i(s)\|^2)}{K^{2/p-1}}\right) \, ds \\
		& \quad \le \sup_{n \in \Nmb} \frac{\kappa_1}{K^{2/p-1}} \left( T + \frac{T}{n} \Ebfbar \|\betabdtil\|_{*,n}^2 \right) \\
		& \quad \le \frac{\kappa_2}{K^{2/p-1}} \\
		& \quad \to 0
	\end{align*}
	as $K \to \infty$, where the last inequality follows from \eqref{eq_kill:beta_bd}.
	Also by Fatou's lemma,
	\begin{align*}
		\Ebf^* \left| \int_0^t \lan \zeta,\, \mutil^*(s) \ran\,ds - \int_0^t \lan \zeta_K,\, \mutil^*(s) \ran\,ds \right| & 
		 \le \liminf_{n \to \infty} \int_0^t \Ebfbar \lan \zeta - \zeta_K,\, \mutil^n(s) \ran\,ds \\
		& \le \frac{\kappa_2}{K^{2/p-1}} \\
		& \to 0
	\end{align*}
	as $K \to \infty$.
	The claim \eqref{eq_kill:char_limit_2_claim} then follows on combining above two displays with \eqref{eq_kill:char_limit_2_claim_1}.
\end{proof}


\section{Laplace upper bound} 
\label{sec_kill:Lapuppbd}

In this section we prove the \textit{Laplace upper bound} \eqref{eq_kill:Lapuppbd}:
For every $F \in \Cmb_b(\Dmc)$,
\begin{equation*}
	\liminf_{n\to \infty}-\frac{1}{n}\log \Ebfbar \big[\exp\bigl(-n F(\mu^{n})\bigr)\big]	\ge
	\inf_{\pi \in \Dmc}\{ F(\pi) + I(\pi) \}.
\end{equation*}
Let $\eps \in (0,1)$ be arbitrary.  
For each $n \in \Nmb$, let $\Pi^n \in \Pmc(\Rmb_+^n)$, $\Upsilon_{\Pi^n}$ (with the associated  $\Sbd^n$) and $\psibd^n \in \Amc_n(\Upsilon_{\Pi^n})$ be an $\eps$-optimal control in \eqref{eq_kill:rep_key}, namely
\begin{align} 
	\label{eq_kill:eps_optimal_lower}
	\begin{aligned}
		& -\frac{1}{n}\log \Ebf \left[\exp\left(-n F(\mu^{n})\right)\right] \\
		& \qquad \ge \Ebfbar \left[ \frac{1}{n} \sum_{i=1}^n \left( R(\nubd_i^n \| \theta) + \frac{1}{2} \int_0^T \|\psibd_i^n(s)\|^2 \, ds \right) + F(\mutil^n) \right] - \eps,
	\end{aligned}
\end{align}
where $(\nubd_i^n)_{i=1}^n$ and $\mutil^n$ are defined in \eqref{eq_kill:nubd^n} and \eqref{eq_kill:mutil^n}.  
Then with $Q^n, \nu^n$ defined as in \eqref{eq_kill:Q^n_general} and \eqref{eq_kill:nu^n_general},
we have from  Jensen's inequality that
\begin{align*}
	& -\frac{1}{n}\log \Ebf \big[\exp\bigl(-n F(\mu^{n})\bigr)\big] \\
	& \qquad \ge \Ebfbar \left[ R(\nu^n \| \theta) + \Ebf^{Q^n} \left(\frac{1}{2} \int_{\Rd \times [0,T]} \|y\|^2 \, \rhobd(dy\,ds) \right) + F(\mutil^n) \right] - \eps.
\end{align*}
The  inequality \eqref{eq_kill:eps_optimal_lower} implies that
\begin{equation*} 
	\sup_{n \in \Nmb} \Ebfbar \left[ \frac{1}{n} \sum_{i=1}^n \left( R(\nubd_i^n \| \theta) + \frac{1}{2} \int_0^T \|\psibd_i^n(s)\|^2 \, ds \right) \right] \le 2 \|F\|_{\infty} + 1,
\end{equation*}
i.e.\  \eqref{eq_kill:tightness_assumption} holds with $C_0 = 2 \|F\|_{\infty} + 1$.

Thus from Lemmas~\ref{lem_kill:tightness_1} and~\ref{lem_kill:tightness_2} we have that $\{(Q^n,\nu^n,\mutil^n)\}$ is tight.
Assume without loss of generality that $(Q^n,\nu^n,\mutil^n)$ converges along the whole sequence weakly to $(Q^*,\nu^*,\mutil^*)$, given on some probability space $(\Omega^*,\Fmc^*,\Pbd^*)$.
By Lemmas~\ref{lem_kill:char_limit_1} and~\ref{lem_kill:char_limit_2}, we have $Q^* \in \Pmc_\infty^1$, $Q^*_{(4)} = \nu^*$, and $\varpi_{Q^*} = \mutil^*$ a.s.\ $\Pbd^*$.
Using the lower semi-continuity of the function $R(\cdot\|\theta)$ and the map in \eqref{eq_kill:check2}, we now have from Fatou's lemma that
\begin{align*}
	& \liminf_{n\to \infty} \Ebfbar \left( R(\nu^n \| \theta) + \Ebf^{Q^n} \left[ \half \int_{\Rmb^d \times [0,T]} \|y\|^2 \,\rhobd(dy\,ds) \right] \right) \\
	& \quad \ge \Ebf^* \left( R(\nu^* \| \theta) + \Ebf^{Q^*} \left[ \half \int_{\Rmb^d \times [0,T]} \|y\|^2 \,\rhobd(dy\,ds) \right] \right) \\
	& \quad = \Ebf^* (\Jmc(Q^*)) \\
	& \quad \ge \Ebf^* (I(\varpi_{Q^*})).
\end{align*}
Also since $F \in \Cmb_b(\Dmc)$,
\begin{equation*}
	\lim_{n \to \infty} \Ebfbar F(\mutil^n) = \Ebf^* F(\mutil^*) = \Ebf^* F(\varpi_{Q^*}).
\end{equation*}
Combining above three displays we have
\begin{align*}
	\liminf_{n\to \infty} -\frac{1}{n}\log \Ebfbar \left[\exp\left(-n F(\mu^{n})\right)\right] & \ge \Ebf^* \left(F(\varpi_{Q^*}) + I(\varpi_{Q^*})\right) - \eps \\
	& \ge \inf_{\pi \in \Dmc} \left(F(\pi) + I(\pi)\right) - \eps.
\end{align*}
Since $\eps>0$ is arbitrary, this completes the proof of the Laplace upper bound. \qed

\section{Laplace lower bound}
\label{sec_kill:Laplowbd}

In this section we prove the \textit{Laplace lower bound} \eqref{eq_kill:Laplowbd}: 
For every $F \in \Cmb_b(\Dmc)$,
\begin{equation*}
	\limsup_{n\to \infty}-\frac{1}{n}\log \Ebfbar \left[\exp\left(-n F(\mu^{n})\right)\right]	\le
	\inf_{\pi \in \Dmc}\{ F(\pi) + I(\pi) \}.
\end{equation*}
Fix $F \in \Cmb_b(\Dmc)$ and let $\eps >0$ be arbitrary and let $\pi^* \in \Dmc$ be such that
\begin{equation*}
	F(\pi^*) + I(\pi^*) \le \inf_{\pi \in \Dmc} \{F(\pi) + I(\pi)\} + \veps.
\end{equation*}
Note in particular that $I(\pi^*)<\infty$. 
Next let $\Theta \in \Pmc_{\infty}^1$ be such that $\varpi_{\Theta} = \pi^*$ and
	$\Jmc(\Theta) \le I(\pi^*) + \veps$.
Recall the canonical variables $(\bbdtil, \bbd, \rhobd, \sigmabdtil)$ introduced in
\eqref{eq:eq752}.
We claim that without loss of generality one can assume that
\begin{equation} 
	\label{eq_kill:rhobd_claim}
	\rhobd(A \times [0,t]) = \int_0^t \delta_A(\psibd(s)) \, ds, \; t \in [0,T], A \in \Bmc(\Rd), \text{ a.s.\ } \Theta,
\end{equation}
where $\psibd$ is an $\Rd$-valued $\Gmc_t$-progressively measurable process such that
\begin{equation*}
	\Ebf^\Theta \int_0^T \|\psibd(s)\|^2 \, ds < \infty.
\end{equation*}
To see this, define $\psibd$ and $\rhobdtil$ on $\Xi$ as follows.
For $t \in [0,T]$, $A \in \Bmc(\Rd)$ and $\xi = (\btil,b,\rho,\sigmatil) \in \Xi$,
\begin{equation*}
	\psibd[\xi](t) \doteq \int_\Rd y \, \rho(dy | t), \: \rhobdtil[\xi](A \times [0,t]) \doteq \int_0^t \delta_A(\psibd[\xi](s)) \, ds,
\end{equation*}
where $\rho(dy\,dt) = \rho(dy|t)\,dt$.
Then
\begin{equation} 
	\label{eq_kill:rhobdtil_1}
	\int_{\Rd \times [0,t]} y \,\rhobdtil(dy\,ds) = \int_0^t \psibd(s) \, ds = \int_{\Rd \times [0,t]} y \,\rhobd(dy\,ds)
\end{equation}
and
\begin{equation}
	\label{eq_kill:rhobdtil_2}
	\int_{\Rd \times [0,t]} \|y\|^2 \,\rhobdtil(dy\,ds) = \int_0^t \|\psibd(s)\|^2 \, ds \le \int_{\Rd \times [0,t]} \|y\|^2 \,\rhobd(dy\,ds).
\end{equation}
Thus with $\Thetatil \doteq \Theta \circ (\bbdtil,\bbd,\rhobdtil,\sigmabdtil)^{-1}$, it follows from \eqref{eq_kill:rhobdtil_1} and \eqref{eq_kill:rhobdtil_2} that $\Thetatil \in \Pmc_\infty^1$, $\varpi_{\Thetatil} = \varpi_\Theta = \pi^*$ and $\Jmc(\Thetatil) \le \Jmc(\Theta) \le I(\pi^*) + \veps$.
This proves the claim.
Henceforth we assume \eqref{eq_kill:rhobd_claim}.

We now construct a filtered probability space $\Upsilon \doteq (\bar \Omega, \bar \Fmc, \{\hat \Fmc_t\}_{0\le t \le T}, \bar \Pbd)$ as follows.
Let	$\bar \Omega \doteq \Xi^{\otimes \infty}$, $\bar \Fmc$ be the $\bar \Pbd$-completion of $\Bmc(\Xi^{\otimes \infty})$, where $\bar \Pbd \doteq \Theta^{\otimes \infty}$.
Define canonical variables on this space as follows: For $\xi \doteq (\xi_i)_{i\in \Nmb} \in \bar \Omega$, where $\xi_i \doteq (\btil_i, b_i, \rho_i, \sigmatil_i)$
\begin{equation*}
	\ti \bbd_i(\xi) \doteq \ti b_i ,\; \bbd_i(\xi) \doteq b_i, \; \rhobd_i(\xi) \doteq \rho_i, \; \sigmabdtil_i(\xi) = \sigmatil_i, \; i \in \Nmb.
\end{equation*}
Let
	$\hat \Fmc_t$ be the $\bar \Pbd$-completion of $\sigma \{ \ti \bbd_i(s), \bbd_i(s), \rhobd_i(A \times [0,s]), \sigmabdtil_i, i \in \Nmb, s \le t, A \in \Bmc(\Rmb^d) \}$.
Define
\begin{equation*}
	\psibd_i(t) \doteq \int_{\Rmb^d} y \, \rhobd_i(dy| t), \; i \in \Nmb, t \in [0,T].
\end{equation*}
Note that by \eqref{eq_kill:rhobd_claim}, a.s.\ $\Pbdbar$, $\rhobd_i(A \times [0,t]) = \int_0^t \delta_A(\psibd_i(s)) \, ds$ for $A \in \Bmc(\Rd)$, $t \in [0,T]$ and $i \in \Nmb$.
With above choices of $\Upsilon$ and canonical random variables, let $(\nubd_i^n)_{i=1}^n$ and $\mutil^n$ be defined as above Lemma~\ref{lem_kill:rep}, with $\Pi^n = \Theta_{(4)}^{\otimes n}$ and $(\Sbd^n, \betabdtil^n, \betabd^n)$ replaced with $(\sigmabdtil^n, \bbdtil^n, \bbd^n)$, where $\sigmabdtil^n \doteq (\sigmabdtil_i)_{i=1}^n$, $\bbdtil^n \doteq (\bbdtil_i)_{i=1}^n$ and $\bbd^n \doteq (\bbd_i)_{i=1}^n$.
Note in particular that $\nubd_i^n = \Theta_{(4)}$ for each $i = 1,\dotsc,n$.
It follows from Lemma~\ref{lem_kill:rep} that
\begin{equation}
	\label{eq_kill:repnredulbd}
	-\frac{1}{n} \log  \Ebf e^{-n F(\mu^n)}
	\le  \bar 
	\Ebf\left[ \frac{1}{n} \sum_{i=1}^n \left( R(\nubd_i^n\|\theta) + \half \int_0^T \|\psibd_i(s)\|^2 \, ds \right) + F(\mutil^n) \right].
\end{equation}
Note that
\begin{align}
	& \Ebfbar \left[ \frac{1}{n} \sum_{i=1}^n \left( R(\nubd_i^n\|\theta) + \half \int_0^T \|\psibd_i(s)\|^2 \, ds \right) \right] \notag \\
	& \quad = R(\Theta_{(4)} \| \theta) + \frac{1}{2n} \sum_{i=1}^n \Ebfbar \left[ \int_{\Rd \times [0,T]} \|y\|^2 \, \rhobd_i(dy\,ds) \right] \notag \\
	& \quad = R(\Theta_{(4)} \| \theta) + \Ebf^{\Theta} \left[ \half \int_{\Rd \times [0,T]} \|y\|^2 \, \rhobd(dy\,ds) \right] \notag \\
	& \quad = \Jmc(\Theta), \label{eq_kill:complete_varuppbd}
\end{align}
where the second equality holds since $\rhobd_i$ are i.i.d.\ under $\Pbdbar$.
The above calculation shows that \eqref{eq_kill:tightness_assumption} holds with $C_0$ replaced by $\Jmc(\Theta) < \infty$.
Define $(Q^n,\nu^n)$ as in \eqref{eq_kill:Q^n_general} and \eqref{eq_kill:nu^n_general} (with $(\Sbd^n, \betabdtil^n, \betabd^n)$ replaced by $(\sigmabdtil^n, \bbdtil^n, \bbd^n)$ and $(\rhobd_i^n)_{i=1}^n$ replaced by $(\rhobd_i)_{i=1}^n$). 
It follows from Lemmas~\ref{lem_kill:tightness_1} and~\ref{lem_kill:tightness_2} that  $\{(Q^n,\nu^n,\mutil^n)\}$ is a tight sequence of $\Pmc(\Xi) \times \Pmc(\Rd) \times \Dmc$-valued random variables.
Suppose $(Q^*,\nu^*,\mutil^*)$ is a weak limit point of the sequence given on some probability space $(\Omega^*, \Fmc^*, \Pbd^*)$. 
From Lemmas~\ref{lem_kill:char_limit_1} and~\ref{lem_kill:char_limit_2} it follows that $Q^* \in \Pmc_{\infty}$, $Q^*_{(4)} = \nu^*$ and $\mutil^* = \varpi_{Q^*}$ a.s.\ $\Pbd^*$.
By law of large numbers $Q^* = \Theta$ a.s.\ $\Pbd^*$ and hence we have $\mutil^* = \varpi_{\Theta} = \pi^*$ a.s.\ $\Pbd^*$.
Combining above observations with \eqref{eq_kill:repnredulbd} and \eqref{eq_kill:complete_varuppbd} we have
\begin{align*}
	\limsup_{n\to \infty} -\frac{1}{n} \log \Ebf e^{-n F(\mu^n)} 
	& \le \Jmc(\Theta) + F(\pi^*) \\
	& \le I(\pi^*) + F(\pi^*) + \eps \\
	& \le \inf_{\pi \in \Dmc} \{F(\pi) + I(\pi)\} + 2\eps,
\end{align*}
where the last two inequalities follow from the choices of $\Theta$ and $\pi^*$.
Since $\veps>0$ is arbitrary, the desired Laplace lower bound follows.\qed

\section{$I$ is a rate function. }
\label{sec_kill:rate_function_pf}

In this section we will prove that the function $I$ defined in \eqref{eq_kill:rate_function} is a rate function, namely we prove the following. 

\begin{proposition}
\label{prop_kill:rate_function}
For each fixed $M < \infty$, the set $F_M \doteq \{\pi \in \Dmc: I(\pi) \le M\}$ is compact.
\end{proposition}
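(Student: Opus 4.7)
The strategy is to establish sequential compactness of $F_M$. Take any sequence $\{\pi_n\} \subset F_M$; by definition of $I$ (and since the infimum is attained up to error), for each $n$ choose $\Theta_n \in \Pmc_\infty^1$ with $\varpi_{\Theta_n} = \pi_n$ and $\Jmc(\Theta_n) \le M + 1/n$. Then both pieces of $\Jmc$ are uniformly bounded: $\sup_n R(\Theta_{n,(4)} \| \theta) \le M+1$ and $\sup_n \Ebf^{\Theta_n}\int_{\Rd \times [0,T]} \|y\|^2 \rhobd(dy\,dt) \le 2(M+1)$.

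First I would establish tightness of $\{\Theta_n\}$ in $\Pmc(\Xi)$ by showing tightness of each marginal, following the template of Lemma \ref{lem_kill:tightness_1}. The marginal $\Theta_{n,(2)}$ is Wiener measure for every $n$ and hence tight. The tightness function $g_1(r) = \int_{\Rd\times[0,T]} \|y\|^2 r(dy\,dt)$ on $\Rmc^W$ satisfies $\int g_1\,d\Theta_{n,(3)} \le 2(M+1)$, yielding tightness of $\{\Theta_{n,(3)}\}$. The identity $\bbdtil = \bbd + \int_{\Rd\times[0,\cdot]} y\, \rhobd(dy\,ds)$ (from property (3) of $\Pmc_\infty$) together with continuity of the map $(b,\rho) \mapsto b + \int_0^\cdot y\, \rho(dy\,ds)$ from $\Cmc \times \Rmc^W$ to $\Cmc$ yields tightness of $\{\Theta_{n,(1)}\}$. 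Tightness of $\{\Theta_{n,(4)}\}$ is immediate from the uniform relative entropy bound since $R(\cdot\|\theta)$ is a tightness function on $\Pmc(\Rmb_+)$.

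Next I would establish tightness of $\{\pi_n\}$ in $\Cmb([0,T]:\Mmc(\Rd))$ (which contains $F_M$ by the second statement of Lemma \ref{lem_kill:uniqueness}) following the pattern of Lemma \ref{lem_kill:tightness_2}. Pointwise tightness at each $t$ follows from the uniform moment bound $\int_{\Rd}\|x\|^2 \pi_n(t)(dx) \le \Ebf^{\Theta_n} \|\bbdtil_t\|^2 \le 2Td + 4T(M+1)$ obtained by Cauchy--Schwarz from $\bbdtil_t = \bbd_t + \int_0^t y\, \rhobd(dy\,ds)$. For the modulus of continuity, I split the increment $\lan f, \pi_n(t+\delta)\ran - \lan f, \pi_n(t)\ran$ for $\|f\|_{BL}\le 1$ into a displacement term bounded by $\Ebf^{\Theta_n}\|\bbdtil_{t+\delta} - \bbdtil_t\| = O(\sqrt\delta)$ uniformly in $n$, and an indicator--change term bounded by $\Theta_{n,(4)}\bigl([\int_0^t \lan\zeta,\pi_n(s)\ran ds,\int_0^{t+\delta}\lan\zeta,\pi_n(s)\ran ds]\bigr)$. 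The sub-quadratic bound \eqref{eq_kill:zeta_2} together with the uniform second moment bound on $\bbdtil$ gives $\sup_{s,n}\lan\zeta,\pi_n(s)\ran \le C$, so this interval has length at most $C\delta$. The entropy bound $R(\Theta_{n,(4)}\|\theta) \le M+1$ yields the estimate $\max_{j\in\Nmb_0}\Theta_{n,(4)}([jC\delta,(j+2)C\delta]) \le 2KC\delta + (M+1+\log 2)/\log K$ for every $K > 1$, exactly as in \eqref{eq:eq1209}, which can be made arbitrarily small by sending $\delta \to 0$ and then $K\to\infty$.

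Passing to a subsequence, $(\Theta_n, \pi_n) \to (\Theta^*, \pi^*)$ in $\Pmc(\Xi) \times \Cmb([0,T]:\Mmc(\Rd))$. The arguments of Lemma \ref{lem_kill:char_limit_1}, specialized to deterministic sequences (so that expectations $\Ebf^*$ disappear), give $\Theta^* \in \Pmc_\infty$ and $\Jmc(\Theta^*) \le \liminf_n \Jmc(\Theta_n) \le M$ by lower semi-continuity of $R(\cdot\|\theta)$ and of the map in \eqref{eq_kill:check2}. The main obstacle is the final identification $\pi^* = \varpi_{\Theta^*}$, which is obtained by passing to the limit in
\begin{equation*}
	\lan f, \pi_n(t)\ran = \Ebf^{\Theta_n}\bigl[f(\bbdtil_t)\one_{\{\sigmabdtil > \int_0^t \lan\zeta, \pi_n(s)\ran ds\}}\bigr], \quad f \in \Cmb_b(\Rd),\; t \in [0,T].
\end{equation*}
This is exactly the computation of Lemma \ref{lem_kill:char_limit_2}: first reduce to bounded $\zeta_K = \zeta \wedge K$ using \eqref{eq_kill:zeta_K} together with the uniform second moment bound on $\bbdtil$, then use the bounded continuous function $g_t$ of \eqref{eq_kill:char_limit_2_g} in combination with Remark \ref{rmk_kill:Pmc_infty} (absolute continuity of $\Theta^*_{(4)}$ with respect to Lebesgue measure, inherited from $\Jmc(\Theta^*) < \infty$) to handle the boundary of the indicator. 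Once this identification is in hand, $\Theta^* \in \Pmc_\infty^1$ with $\varpi_{\Theta^*} = \pi^*$, so $I(\pi^*) \le \Jmc(\Theta^*) \le M$ and $\pi^* \in F_M$, completing the proof.
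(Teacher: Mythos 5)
Your proposal is correct and follows essentially the same route as the paper's proof: extract $\Theta_n$ realizing $I(\pi_n)$ up to $1/n$, prove tightness of $\{\Theta_n\}$ marginal by marginal, prove pre-compactness of $\{\varpi_{\Theta_n}\}$ in $\Cmb([0,T]:\Mmc(\Rd))$ via a pointwise moment bound and an Aldous-type fluctuation estimate split into a displacement term and an indicator-change term, and identify the limit as $\varpi_{\Theta^*}$ using the convergence $g_t(\Theta_n,\varpi_{\Theta_n})\to g_t(\Theta^*,\pi^*)$. The only cosmetic differences (arguing tightness of $\Theta_{n,(1)}$ via continuity of the map $(b,\rho)\mapsto b+\int_0^\cdot y\,\rho(dy\,ds)$ rather than via a tightness function on $\Cmc$, and applying the entropy-based bound directly to $\Theta_{n,(4)}$ rather than to the weak limit via \eqref{eq:eq1147}) are minor and both work.
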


\begin{proof}
Let $\{\pi_n\}_{n\in \Nmb} \subset F_M$.
Then for each $n \in \Nmb$ there exists $\Theta^n \in \Pmc_{\infty}^1$ such that $\varpi_{\Theta^n} = \pi_n$ and $\Jmc(\Theta^n) \le M+ \frac{1}{n}$.
In particular
\begin{equation*}
	\sup_{n\in \Nmb} \left( R(\Theta^n_{(4)} \| \theta) + \Ebf^{\Theta^n}\left[ \half \int_{\Rd\times [0,T]} \|y\|^2 \,\rhobd(dy\,ds) \right] \right) \le M+1.
\end{equation*}

Using the above bound we now argue that $\{\Theta^n\}$ is tight.
The proof is similar to that of Lemma~\ref{lem_kill:tightness_1} and so we only provide a sketch.
Note that $\Theta^n_{(2)}$ is the $d$-dimensional Wiener measure for each $n$ and so the tightness of $\{\Theta^n_{(2)}\}$ is immediate.
Next, recall tightness functions $G_j$ for $j=1$, $2$, $3$ and the function $g_3$ in the proof of Lemma~\ref{lem_kill:tightness_1}.
Since $G_1(\Theta^n_{(3)}) = \Ebf^{\Theta^n} \int_{\Rd \times [0,T]} \|y\|^2 \, \rhobd(dy\,ds) \le 2(M+1)$, $\{\Theta^n_{(3)}\}$ is tight.
To argue tightness of $\{\Theta^n_{(1)}\}$, define $\ubd \colon \Xi \to \Cmc$ as $\ubd(t)[\xi] \doteq \int_{\Rd \times [0,t]} y \, \rhobd(dy\,ds)$ for $t \in [0,T]$.
It follows from Cauchy--Schwarz inequality that $G_2(\Theta^n \circ \ubd^{-1}) = \Ebf^{\Theta^n} \int_0^T \| \dot{\ubd}(s) \|^2 \, ds \le \Ebf^{\Theta^n} \int_{\Rd \times [0,T]} \|y\|^2 \, \rhobd(dy\,ds) \le 2(M+1)$.
So $\{\Theta^n \circ \ubd^{-1}\}$ is tight and consequently $\{\Theta^n \circ (\bbd,\ubd)^{-1}\}$ is also tight.
This implies tightness of $\{\Theta^n_{(1)}\}$ since $\Theta^n_{(1)} = \Theta^n \circ (g_3(\bbd,\ubd))^{-1}$ and $g_3$ is continuous.
Finally, tightness of $\{\Theta^n_{(4)}\}$ follows since $G_3(\Theta^n_{(4)}) = R(\Theta^n_{(4)}\|\theta) \le M+1$.
This proves the tightness of $\{\Theta^n\}$.

Now suppose $\Theta^n$ converges along a subsequence (labeled as $\{n\}$ for simplicity) to $\Theta \in \Pmc(\Xi)$. 
Using arguments similar to those in the proof of Lemma~\ref{lem_kill:char_limit_1}, we now show that $\Theta \in \Pmc_\infty$ and $\Jmc(\Theta) < \infty$.
We need to show that $\Theta_{(4)} \ll \theta$ and verify properties (1)--(3) of Section~\ref{sec_kill:rate function LDP} for $\Theta$.
It is clear that (1) holds.
Using the lower semi-continuity of the function $R(\cdot\|\theta)$ and the map in \eqref{eq_kill:check2}, we have
\begin{align}
	\Jmc(\Theta) & = R(\Theta_{(4)} \|\theta) + \Ebf^\Theta \left[ \half \int_{\Rmb^d \times [0,T]} \|y\|^2 \,\rhobd(dy\,ds) \right] \notag \\
	& \le \liminf_{n \to \infty} \left( R(\Theta^n_{(4)} \|\theta) + \Ebf^{\Theta^n} \left[ \half \int_{\Rmb^d \times [0,T]} \|y\|^2 \,\rhobd(dy\,ds) \right] \right) \le M. \label{eq_kill:rate_function_pf_1}
\end{align}
This implies (2), the property $\Theta_{(4)} \ll \theta$, and that $\Jmc(\Theta) < \infty$.
Finally, recalling that the map \eqref{eq_kill:check3_1} is bounded and continuous, we have
\begin{align*}
	& \Ebf^{\Theta} \left(\left| \ti \bbd_t - \bbd_t - \int_{\Rmb^d \times [0,t]} y \,\rhobd(dy\,ds) \right| \wedge 1 \right) \\
	& \quad = \lim_{n \to \infty} \Ebf^{\Theta^n} \left(\left| \ti \bbd_t - \bbd_t - \int_{\Rmb^d \times [0,t]} y \,\rhobd(dy\,ds) \right| \wedge 1 \right) = 0
\end{align*}
for each $t \in [0,T]$, which verifies (3).
Thus we have shown that $\Theta \in \Pmc_\infty$ and $\Jmc(\Theta) < \infty$.


We will now use arguments similar to those in Lemma~\ref{lem_kill:tightness_2} to show that $\{\varpi_{\Theta^n}\}$ is pre-compact in $\Cmb([0,T]:\Mmc(\Rd))$.
	The tightness of $\{\varpi_{\Theta^n}(t)\}$ in $\Mmc(\Rd)$ for each $t \in [0,T]$
	follows from the estimate
	\begin{align}
		G_4(\varpi_{\Theta^n}(t)) 
		 \le 2\Ebf^{\Theta^n} \left[\|\bbd_t\|^2 \right] + 2T \Ebf^{\Theta^n} \left[ \int_{\Rd \times [0,T]} \| y \|^2 \, \rhobd(dy\,ds) \right] 
		\le 2dT + 4T(M+1). \label{eq_kill:rate_function_G4}
	\end{align}
	Next we proceed to consider fluctuations. 
	Fix $\delta > 0$ and consider $0 \le t_1 \le t_2 \le T$ with $|t_1 - t_2| < \delta$.
	As in the proof of
	Lemma~\ref{lem_kill:tightness_2} we can estimate
	\begin{align*}
		 &d_{BL}(\varpi_{\Theta^n}(t_1), \varpi_{\Theta^n}(t_2))\\ 
		&\quad  \le \sup_{\|f\|_{BL} \le 1} \Ebf^{\Theta^n} \left| f(\bbdtil(t_1)) - f(\bbdtil(t_2)) \right| \one_{\left\{\sigmabdtil > \int_0^{t_1} \lan\zeta,\,\varpi_{\Theta^n}(s)\ran\,ds\right\}} \\
		& \qquad + \sup_{\|f\|_{BL} \le 1} \Ebf^{\Theta^n} \left| f(\bbdtil(t_2)) \left( \one_{\left\{\sigmabdtil > \int_0^{t_1} \lan\zeta,\,\varpi_{\Theta^n}(s)\ran\,ds\right\}} - \one_{\left\{\sigmabdtil > \int_0^{t_2} \lan\zeta,\,\varpi_{\Theta^n}(s)\ran\,ds\right\}} \right) \right| \\
		& \quad \doteq \Tmcbar_1^n + \Tmcbar_2^n.
	\end{align*}
	For $\Tmcbar_1^n$, using Cauchy--Schwarz inequality as in Lemma~\ref{lem_kill:tightness_2}
	\begin{align*}
		\sup_{|t_1-t_2|<\delta} \Tmcbar_1^n & \le \sup_{|t_1-t_2|<\delta} \left( \Ebf^{\Theta^n} \left\| \bbd(t_1) - \bbd(t_2) \right\|^2 \right)^\half + \sqrt{\delta} \left( \Ebf^{\Theta^n} \int_{\Rd \times [0,T]} \|y\|^2 \, \rhobd(dy\,ds) \right)^\half \\
		& \le \sqrt{d\delta} + \sqrt{2(M+1)\delta} \to 0,
	\end{align*}		
	uniformly in $n$, as $\delta \to 0$.

	Consider now  $\Tmcbar^n_2$. It follows from \eqref{eq_kill:zeta_2} and \eqref{eq_kill:rate_function_G4} that
	\begin{align*}
		\lan\zeta,\, \varpi_{\Theta^n}(t)\ran 
		 \le \Ctil_\zeta \Ebf^{\Theta^n} (1 + \|\bbdtil_t\|^2) 
		 \le \Ctil_\zeta (1 + 2dT + 4T(M+1)) \
		 \doteq \Cbar_0. 
	\end{align*}
	So we have
	\begin{equation*}
		\left| \int_0^{t_1} \lan\zeta,\,\varpi_{\Theta^n}(s)\ran\,ds - \int_0^{t_2} \lan\zeta,\,\varpi_{\Theta^n}(s)\ran\,ds \right| \le \Cbar_0 \delta
	\end{equation*}	
	and hence
	\begin{align*}
		\sup_{|t_1-t_2|<\delta} \Tmcbar^n_2 & \le \sup_{|t_1-t_2|<\delta} \Theta^n \left( \int_0^{t_1} \lan\zeta,\,\varpi_{\Theta^n}(s)\ran\,ds < \sigmabdtil \le \int_0^{t_2} \lan\zeta,\,\varpi_{\Theta^n}(s)\ran\,ds \right) \\
		& \le \max_{j \in \Nmb_0} \Theta^n_{(4)}([j \Cbar_0 \delta, (j+2) \Cbar_0 \delta]).
	\end{align*}
Since $\Theta^n \to \Theta \in \Pmc_\infty$, it then follows from  \eqref{eq:eq1147} that
	\begin{align*}
		\limsup_{n \to \infty} \sup_{|t_1-t_2|<\delta}  \Tmcbar_2^n & \le \limsup_{n \to \infty} \max_{j \in \Nmb_0} \Theta^n_{(4)}([j \Cbar_0 \delta, (j+2) \Cbar_0 \delta]) \\
		& = \max_{j \in \Nmb_0} \Theta_{(4)}([j \Cbar_0 \delta, (j+2) \Cbar_0 \delta]) \to 0,
	\end{align*}
	as $\delta \to 0$, where the  convergence to $0$ is argued using an estimate as in \eqref{eq:eq1209}.
	Combining above two convergence results of $\Tmcbar_1^n$ and $\Tmcbar_2^n$ gives us pre-compactness of $\{\varpi_{\Theta^n}\}$ in $\Cmb([0,T]:\Mmc(\Rd))$.

	Next, let $\varpi_{\Theta^n}$ converge to $\pi^*$ along a further subsequence (labeled once again as $\{n\}$).
	Then we have $(\Theta^n,\varpi_{\Theta^n}) \to (\Theta,\pi^*)$ in $\Pmc(\Xi) \times \Cmb([0,T]:\Mmc(\Rd))$.
	Fix $f \in \Cmb_b(\Rd)$ and $t \in [0,T]$.
	Recall the function $g_t$ associated with $f$ defined in \eqref{eq_kill:char_limit_2_g}.
	Note that, for $t \in [0,T]$,
	\begin{equation*}
		\lan f,\,\varpi_{\Theta^n}(t) \ran = g_t(\Theta^n, \varpi_{\Theta^n}), \quad \lim_{n \to \infty} \lan f,\,\varpi_{\Theta^n}(t) \ran = \lan f,\,\pi^*(t) \ran.
	\end{equation*}
	We claim that $g_t(\Theta^n, \varpi_{\Theta^n}) \to g_t(\Theta,\pi^*)$ as $n \to \infty$ for all $t \in [0,T]$.
	Once the claim is verified, we will have $\lan f,\,\pi^*(t) \ran = g_t(\Theta,\pi^*)$ and consequently by Lemma~\ref{lem_kill:uniqueness} $\pi^*=\varpi_\Theta$ .
This along with  \eqref{eq_kill:rate_function_pf_1} will say that $I(\pi^*) = I(\varpi_\Theta) \le \Jmc(\Theta) \le M$.
	Hence $\pi^n = \varpi_{\Theta^n} \to \varpi_\Theta = \pi^* \in F_M$ completing the proof of the Proposition. 

To see the  claim that $g_t(\Theta^n, \varpi_{\Theta^n}) \to g_t(\Theta,\pi^*)$ as $n \to \infty$,
	note that
	\begin{equation}
		\label{eq_kill:rate_function_triangle}
		|g_t(\Theta^n,\varpi_{\Theta^n}) - g_t(\Theta,\pi^*)| \le |g_t(\Theta^n,\varpi_{\Theta^n}) - g_t(\Theta^n,\pi^*)| + |g_t(\Theta^n,\pi^*) - g_t(\Theta,\pi^*)|.
	\end{equation}
	Since $\Theta^n \to \Theta \in \Pmc_\infty$, by Remark~\ref{rmk_kill:Pmc_infty} we have
	\begin{equation}
			\label{eq_kill:rate_function_g1}
			\lim_{n \to \infty} |g_t(\Theta^n,\pi^*) - g_t(\Theta,\pi^*)| = 0.
	\end{equation}	
	For the first term on the right hand side of \eqref{eq_kill:rate_function_triangle}, we have
	\begin{align*}
		& |g_t(\Theta^n,\varpi_{\Theta^n}) - g_t(\Theta^n,\pi^*)| \\
		& \quad \le \Ebf^{\Theta^n} \left| f(\bbdtil_t) \left( \one_{\left\{\sigmabdtil > \int_0^t \lan\zeta,\,\varpi_{\Theta^n}(s)\ran \, ds \right\}} - \one_{\left\{\sigmabdtil > \int_0^t \lan\zeta,\,\pi^*(s)\ran \, ds \right\}} \right) \right| \\
		& \quad \le \|f\|_\infty \Theta^n \left( \sigmabdtil \mbox{ lies between } \int_0^t \lan\zeta,\,\varpi_{\Theta^n}(s)\ran \, ds \mbox{ and } \int_0^t \lan\zeta,\,\pi^*(s)\ran \, ds\right).
	\end{align*}
	Similar to the proof of \eqref{eq_kill:char_limit_2_claim}, with $\zeta_K$ defined as above \eqref{eq_kill:char_limit_2_claim_1}, one can show using \eqref{eq_kill:zeta_K} and \eqref{eq_kill:rate_function_G4} that
	for $t \in [0,T]$,
	\begin{align*}
		& \lim_{n \to \infty} \left| \int_0^t \lan\zeta_K,\,\varpi_{\Theta^n}(s)\ran \, ds - \int_0^t \lan\zeta_K,\,\pi^*(s)\ran \, ds \right| = 0, \\
		& \lim_{K \to \infty} \sup_{n \in \Nmb} \left| \int_0^t \lan \zeta,\, \varpi_{\Theta^n}(s) \ran\,ds - \int_0^t \lan \zeta_K,\, \varpi_{\Theta^n}(s) \ran\,ds \right| = 0, \\
		& \lim_{K \to \infty} \left| \int_0^t \lan \zeta,\, \pi^*(s) \ran\,ds - \int_0^t \lan \zeta_K,\, \pi^*(s) \ran\,ds \right| = 0.
	\end{align*}
	Hence  as $n \to \infty$,
	\begin{equation*}
		\left| \int_0^t \lan\zeta,\,\varpi_{\Theta^n}(s)\ran \, ds - \int_0^t \lan\zeta,\,\pi^*(s)\ran \, ds \right| \to 0.
	\end{equation*}
	So for any $\delta > 0$, using \eqref{eq:eq1144} and \eqref{eq:eq1147}
	\begin{align*}
		& \limsup_{n \to \infty} |g_t(\Theta^n,\varpi_{\Theta^n}) - g_t(\Theta^n,\pi^*)| \\
		& \quad \le \|f\|_\infty \limsup_{n \to \infty} \Theta^n \left( \sigmabdtil \mbox{ lies between } \int_0^t \lan\zeta,\,\varpi_{\Theta^n}(s)\ran \, ds \mbox{ and } \int_0^t \lan\zeta,\,\pi^*(s)\ran \, ds\right) \\
		& \quad \le \|f\|_\infty \limsup_{n \to \infty} \max_{j \in \Nmb_0} \Theta^n_{(4)}([j\delta,(j+1)\delta]) \\
		& \quad = \|f\|_\infty \max_{j \in \Nmb_0} \Theta_{(4)}([j\delta,(j+1)\delta]).
	\end{align*}
	Letting $\delta \to 0$ in the above display and using a similar estimate as in \eqref{eq:eq1209} we have that  $\limsup_{n \to \infty} |g_t(\Theta^n,\varpi_{\Theta^n}) - g_t(\Theta^n,\pi^*)| = 0$.
	This together with \eqref{eq_kill:rate_function_triangle} and \eqref{eq_kill:rate_function_g1} proves the claim that for every $t\in[0,T]$, $g_t(\Theta^n, \varpi_{\Theta^n}) \to g_t(\Theta,\pi^*)$ as $n \to \infty$.
\end{proof}
	
\section{Proof of Proposition \ref{prop_kill:rep_general}}
\label{sec:pfprop4.3}
Our starting point for the proof of Proposition \ref{prop_kill:rep_general} will be the representation in \eqref{eq_kill:rep_start}. We first consider the simpler setting where for fixed
$\Pi \in \Pmc(\Smb)$ and a probability space $(\Omegabar,\Fmcbar,\Pbdbar)$ as in Section \ref{sec_kill:var_rep_general}, the filtration $\hat \Fmc_t$
is replaced by $\Fmcbar_t$ where recall that
$\Fmcbar_t \doteq \sigma\{\Fmcbar_t^{\Wbar,\Xbar} \cup \Nmcbar\}$,  $\Nmcbar$ is the collection of all $\Pbdbar$-null sets, and $\Fmcbar_t^{\Wbar,\Xbar} \doteq \sigma\{\Xbar, \Wbar(s):s \le t\}$.
We write 
$\bar \Upsilon_{\Pi} \doteq (\Omegabar,\Fmcbar,\{\Fmcbar_t\},\Pbdbar)$
and consider the collection of processes
\begin{align*}
	\Amc(\bar \Upsilon_{\Pi}) \doteq \{ \psibar : \mbox{the process } \psibar & \mbox{ is } \Fmcbar_t \mbox{-progressively measurable} \\ & \qquad \qquad \mbox{ and } \Ebfbar \int_0^T \|\psibar(s)\|^2 \,ds < \infty\}.
\end{align*}
For each $N < \infty$, let
\begin{equation}
	\label{eq_kill:Amcbar^N}
	\Amc^N(\bar \Upsilon_{\Pi}) \doteq \left\{\psibar \in \Amc(\bar \Upsilon_{\Pi}) : \int_0^T \|\psibar(s)\|^2 \, ds \le N, \Pbdbar \mbox{-a.s.} \right\}.
\end{equation}
For  $\Amc \subset \Amc(\bar \Upsilon_{\Pi})$ and $g \in \Mmb_b(\Cmc \times \Smb)$, define
\begin{equation}
	\label{eq_kill:Lambda_Pi}
	\Lambda(\Amc,g) \doteq \inf_{\psibar \in \Amc} \Ebfbar \left[ \half \int_0^T \|\psibar(s)\|^2 \, ds + g\left(\Wbar + \int_0^\cdot \psibar(s) \, ds, \Xbar \right) \right].
\end{equation} 
The following lemma gives a convenient approximation result. 
The proof is  similar to that of \cite[Lemma 3.4]{BudhirajaDupuis2000variational} (see also proof of \cite[Theorem 3.1]{BoueDupuis1998variational}), but we provide details for completeness.
Recall from Section \ref{sec_kill:var_rep_general} that $\Pbdtil$ is the Wiener measure on $(\Omegatil, \Fmctil) = (\Cmc,\Bmc(\Cmc))$.
\begin{lemma}
	\label{lem_kill:continuity of repn}
	Fix $\Pi \in \Pmc(\Smb)$. Let $\{f_n\}$ be a uniformly bounded sequence of real-valued measurable functions on $\Cmc \times \Smb$ converging to $f$ a.s.\ $\Pbdtil \times \Pi$.
	Then for every $N < \infty$, $\Lambda(\Amc^N(\bar \Upsilon_{\Pi}),f_n) \to \Lambda(\Amc^N(\bar \Upsilon_{\Pi}),f)$ as $n \to \infty$.
\end{lemma}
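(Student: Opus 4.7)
The plan is to use Girsanov's theorem to transfer the almost-sure convergence of $f_n\to f$ under $\Pbdtil\otimes\Pi$ into convergence of the controlled functionals under $\Pbdbar$, uniformly over $\psibar\in\Amc^N(\bar\Upsilon_\Pi)$. Since $f_n, f$ are uniformly bounded and the penalty $\half\int_0^T\|\psibar(s)\|^2 ds$ appears identically in $\Lambda(\Amc^N(\bar\Upsilon_\Pi), f_n)$ and $\Lambda(\Amc^N(\bar\Upsilon_\Pi), f)$, a standard $\eps$-optimizer comparison gives
\begin{equation*}
	|\Lambda(\Amc^N(\bar\Upsilon_\Pi),f_n) - \Lambda(\Amc^N(\bar\Upsilon_\Pi),f)| \le \sup_{\psibar\in\Amc^N(\bar\Upsilon_\Pi)} \Ebfbar\left|(f_n-f)\left(\Wbar+\int_0^\cdot \psibar(s)\,ds,\,\Xbar\right)\right|,
\end{equation*}
so it suffices to show that the right-hand side vanishes as $n\to\infty$.

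For each $\psibar\in\Amc^N(\bar\Upsilon_\Pi)$, set
\begin{equation*}
	M_T \doteq \exp\left(-\int_0^T \psibar(s)\cdot d\Wbar(s) - \half\int_0^T\|\psibar(s)\|^2\, ds\right).
\end{equation*}
The hard constraint $\int_0^T\|\psibar\|^2\, ds\le N$ triggers Novikov's condition, so $M$ is a positive $\Fmcbar_t$-martingale with $M_0=1$. On the equivalent measure $d\Qbd\doteq M_T\, d\Pbdbar$, Girsanov's theorem yields that $\Wtil\doteq\Wbar+\int_0^\cdot \psibar(s)\,ds$ is a standard Brownian motion relative to $\{\Fmcbar_t\}$, hence independent of $\Fmcbar_0\supseteq\sigma(\Xbar)$. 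Since $\Xbar$ is $\Fmcbar_0$-measurable and $\Ebfbar[M_T\mid\Fmcbar_0]=M_0=1$, the $\Qbd$-marginal of $\Xbar$ coincides with its $\Pbdbar$-marginal $\Pi$. Therefore $(\Wtil,\Xbar)$ has joint distribution $\Pbdtil\otimes\Pi$ under $\Qbd$. Changing measure back and applying Cauchy--Schwarz yields
\begin{equation*}
	\Ebfbar\left|(f_n-f)(\Wtil,\Xbar)\right| = \Ebf^{\Qbd}\left[M_T^{-1}\,|(f_n-f)(\Wtil,\Xbar)|\right] \le \bigl(\Ebf^{\Qbd} M_T^{-2}\bigr)^{1/2}\left(\int_{\Cmc\times\Smb}(f_n-f)^2\, d(\Pbdtil\otimes\Pi)\right)^{1/2}.
\end{equation*}
Under $\Qbd$ one computes $M_T^{-1}=\exp(\int_0^T\psibar\,d\Wtil-\half\int_0^T\|\psibar\|^2\, ds)$, so $M_T^{-2}\le e^N Z_T$ with $Z_T\doteq\exp(2\int_0^T\psibar\,d\Wtil-2\int_0^T\|\psibar\|^2\, ds)$. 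Since $\int_0^T\|2\psibar\|^2\, ds\le 4N$, Novikov again implies that $Z$ is a $\Qbd$-martingale, so $\Ebf^{\Qbd} M_T^{-2}\le e^N$, \emph{uniformly} in $\psibar\in\Amc^N(\bar\Upsilon_\Pi)$. The second factor tends to zero by uniform boundedness of $\{f_n\}$, a.s.\ convergence $f_n\to f$, and the bounded convergence theorem, which finishes the argument.

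The main technical point is verifying that the Girsanov tilt simultaneously turns $\Wtil$ into a Brownian motion and preserves the law $\Pi$ of $\Xbar$ with the product structure $\Pbdtil\otimes\Pi$; this relies essentially on the $\Fmcbar_0$-measurability of $\Xbar$ and the martingale identity $\Ebfbar[M_T\mid\Fmcbar_0]=1$. Once this is in hand, the a priori bound $\int_0^T\|\psibar\|^2\, ds\le N$ built into $\Amc^N(\bar\Upsilon_\Pi)$ provides the uniform-in-$\psibar$ control of the Radon--Nikodym density needed for the Cauchy--Schwarz step. Without the hard energy bound, one would only have $\Ebfbar\int_0^T\|\psibar\|^2\, ds<\infty$, which is inadequate for such a uniform estimate; this is precisely why the approximation via $\Amc^N$ is essential at this stage of the proof of Proposition~\ref{prop_kill:rep_general}.
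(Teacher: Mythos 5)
Your argument is correct, and it is genuinely different from the paper's. The paper handles the two one-sided limits $\liminf_n \Lambda(\Amc^N,f_n) \ge \Lambda(\Amc^N,f)$ and $\limsup_n \Lambda(\Amc^N,f_n) \le \Lambda(\Amc^N,f)$ separately: in the $\liminf$ half it picks $\eps$-optimal controls $\psibar_{n,\eps}$ for $f_n$ and invokes Lemma~2.8(b) of Bou\'e--Dupuis, which turns a uniform-in-$n$ bound $R\bigl(\Pbdbar\circ(\Wbar+\int_0^\cdot\psibar_{n,\eps},\Xbar)^{-1}\|\Pbdtil\times\Pi\bigr)\le N/2$ into convergence of the $f_n$ vs.\ $f$ expectations along that sequence of controls; the $\limsup$ half is the easy one with a fixed near-optimizer. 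You instead note the symmetric bound $|\Lambda(\Amc^N,f_n)-\Lambda(\Amc^N,f)|\le\sup_{\psibar\in\Amc^N}\Ebfbar|(f_n-f)(\Wbar+\int_0^\cdot\psibar,\Xbar)|$ and then kill the supremum directly: Girsanov identifies the $\Qbd$-law of $(\Wtil,\Xbar)$ with $\Pbdtil\times\Pi$, Cauchy--Schwarz reduces the matter to a uniform $L^2(\Qbd)$ bound on $M_T^{-1}$, and the hard energy constraint $\int_0^T\|\psibar\|^2\le N$ yields $\Ebf^{\Qbd}M_T^{-2}\le e^N$ via Novikov, uniformly over $\Amc^N$. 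Both arguments hinge on the same ingredient -- the a.s.\ energy cap that $\Amc^N$ imposes -- but you convert it into an exponential-moment bound on the Radon--Nikodym density rather than into a relative-entropy bound feeding the Bou\'e--Dupuis uniform-integrability lemma. The payoff of your route is that it is self-contained (no appeal to Lemma~2.8(b)) and handles both directions in one shot; the cost is that it is specific to the $L^2$/Gaussian structure, whereas the entropy-based argument the paper uses is the one that generalizes when one only has entropy control rather than a pathwise energy cap.
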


\begin{proof}
	Fix $\eps > 0$.
	For each $n \in \Nmb$ we pick an element $\psibar_{n,\eps} \in \Amc^N(\bar \Upsilon_{\Pi})$ such that
	\begin{equation*}
		\Lambda(\Amc^N,f_n) \ge \Ebfbar \left[ \half \int_0^T \|\psibar_{n,\eps}(s)\|^2 \, ds + f_n\left(\Wbar + \int_0^\cdot \psibar_{n,\eps}(s) \, ds, \Xbar \right) \right] - \eps.
	\end{equation*}
	By definition, for each $n \in \Nmb$
	\begin{equation*}
		\Ebfbar \left[ \half \int_0^T \|\psibar_{n,\eps}(s)\|^2 \, ds + f\left(\Wbar + \int_0^\cdot \psibar_{n,\eps}(s) \, ds, \Xbar \right) \right] \ge \Lambda(\Amc^N(\bar \Upsilon_{\Pi}),f).
	\end{equation*}	
	We claim that as $n \to \infty$,
	\begin{equation*}
		\Ebfbar f_n\left(\Wbar + \int_0^\cdot \psibar_{n,\eps}(s) \, ds, \Xbar \right) - \Ebfbar f\left(\Wbar + \int_0^\cdot \psibar_{n,\eps}(s) \, ds, \Xbar \right) \to 0.
	\end{equation*}
	Suppose for now that the claim holds.
	Combining above three displays we have $\liminf_{n \to \infty} \Lambda(\Amc^N(\bar \Upsilon_{\Pi}),f_n) \ge \Lambda(\Amc^N(\bar \Upsilon_{\Pi}),f) - \eps$.
	Since $\eps > 0$ is arbitrary, this shows $\liminf_{n \to \infty} \Lambda(\Amc^N(\bar \Upsilon_{\Pi}),f_n) \ge \Lambda(\Amc^N(\bar \Upsilon_{\Pi}),f)$.
	Now in order to prove the claim, from  \cite[Lemma 2.8(b)]{BoueDupuis1998variational}, it suffices to show that the relative entropies
	\begin{equation*}
		R\left( \Pbdbar \circ \left(\Wbar + \int_0^\cdot \psibar_{n,\eps}(s) \, ds, \Xbar \right)^{-1} \Big\|\, \Pbdtil \times \Pi \right)
	\end{equation*}
	are uniformly bounded in $n$.
	For this, first consider the probability measure $\Pbdbar^{n,\eps}$ defined by
	\begin{equation*}
		\frac{d\Pbdbar^{n,\eps}}{d\Pbdbar} \doteq \exp \left\{ -\int_0^T \psibar_{n,\eps}(s) \, d\Wbar(s) - \half \int_0^T \|\psibar_{n,\eps}(s)\|^2 \, ds \right\}.
	\end{equation*}	
	By Girsanov's theorem, on the probability space $(\Omegabar,\Fmcbar,\Pbdbar^{n,\eps})$, $\Wbar + \int_0^\cdot \psibar_{n,\eps}(s) \, ds$ is an $\{\Fmcbar_t\}$-Brownian motion independent of the $\Fmcbar_0$-measurable random variable $\Xbar$, and $\Pbdbar^{n,\eps} \circ (\Xbar)^{-1} = \Pi$.
	So we have 
	\begin{equation*}
		\Pbdbar^{n,\eps} \circ \left(\Wbar + \int_0^\cdot \psibar_{n,\eps}(s) \, ds, \Xbar \right)^{-1} = \Pbdtil \times \Pi
	\end{equation*}
	and hence, for every $n \in \Nmb$,
	\begin{align*}
		& R\left( \Pbdbar \circ \left(\Wbar + \int_0^\cdot \psibar_{n,\eps}(s) \, ds, \Xbar \right)^{-1} \Big\|\, \Pbdtil \times \Pi \right) \\
		& \qquad \le R(\Pbdbar \| \Pbdbar^{n,\eps}) = \Ebfbar \left[ \half \int_0^T \|\psibar_{n,\eps}(s)\|^2 \, ds \right] \le \frac{N}{2}.
	\end{align*}
	Thus the claim holds and we have $\liminf_{n \to \infty} \Lambda(\Amc^N(\bar \Upsilon_{\Pi}),f_n) \ge \Lambda(\Amc^N(\bar \Upsilon_{\Pi}),f)$.
	
	We now prove $\limsup_{n \to \infty} \Lambda(\mathscr{A}^N(\bar \Upsilon_\Pi),f_n) \le \Lambda(\mathscr{A}^N(\bar \Upsilon_\Pi),f)$.
	Pick an element $\psibar_\eps \in \Amc^N(\bar \Upsilon_{\Pi})$ such that
	\begin{equation*}
		\Ebfbar \left[ \half \int_0^T \|\psibar_\eps(s)\|^2 \, ds + f\left(\Wbar + \int_0^\cdot \psibar_\eps(s) \, ds, \Xbar \right) \right] \le \Lambda(\Amc^N(\bar \Upsilon_{\Pi}),f) + \eps.
	\end{equation*}
	Since $\Pbdbar \circ \left(\Wbar + \int_0^\cdot \psibar_\eps(s) \, ds, \Xbar \right)^{-1} \ll \Pbdtil \times \Pi$, we have that as $n \to \infty$,
	\begin{equation*}
		\Ebfbar f_n\left(\Wbar + \int_0^\cdot \psibar_\eps(s) \, ds, \Xbar \right) \to \Ebfbar f\left(\Wbar + \int_0^\cdot \psibar_\eps(s) \, ds, \Xbar \right).
	\end{equation*}
	Also, by definition, for each $n \in \Nmb$
	\begin{equation*}
		\Lambda(\Amc^N(\bar \Upsilon_{\Pi}),f_n) \le \Ebfbar \left[ \half \int_0^T \|\psibar_\eps(s)\|^2 \, ds + f_n\left(\Wbar + \int_0^\cdot \psibar_\eps(s) \, ds, \Xbar \right) \right].
	\end{equation*}
	Combining the last three displays gives $\limsup_{n \to \infty} \Lambda(\Amc^N(\bar \Upsilon_{\Pi}),f_n) \le \Lambda(\Amc^N(\bar \Upsilon_{\Pi}),f)$ $+ \eps$.
	The result follows since $\eps > 0$ is arbitrary.
\end{proof}

\begin{remark}\label{Rk9.1}
	From the proof above, we see that Lemma~\ref{lem_kill:continuity of repn} remains valid even if $\{\Fmcbar_t\}$ is replaced by the general $\{\Fmchat_t\}$
	as long as $\Wbar$ is a Brownian motion with respect to this filtration and $\Xbar$ is measurable with respect to $\Fmchat_0$.	
\end{remark}

The following lemma proves Proposition  \ref{prop_kill:rep_general} in the special case where the general filtration  $\hat \Fmc_t$
is replaced by $\Fmcbar_t$.
\begin{lemma} 
	\label{lem_kill:rep_single}
	Let $f \in \Mmb_b(\Cmc \times \Smb)$. Then
	\begin{align} 
		& -\log \Ebf \left[\exp \left(-f(B,X)\right) \right] \notag \\
		& \quad = \inf_{\Pi, \bar \Upsilon_{\Pi}} \inf_{\psibar \in \Amc(\bar \Upsilon_{\Pi})} \left\{ R(\Pi\|\rho) + \Ebfbar \left[ \half \int_0^T \|\psibar(s)\|^2 \, ds + f\left(\Wbar + \int_0^\cdot \psibar(s) \, ds, \Xbar \right) \right] \right\}, \label{eq_kill:rep_single}
	\end{align}
	where the outer infimum is over all $\Pi \in \Pmc(\Smb)$ and all systems $\bar \Upsilon_{\Pi}$ of the form introduced at the beginning of the section.
\end{lemma}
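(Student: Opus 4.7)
The plan is to prove the two inequalities in \eqref{eq_kill:rep_single} separately, starting from the Donsker--Varadhan and Bou\'e--Dupuis identity \eqref{eq_kill:rep_start}. For the direction RHS of \eqref{eq_kill:rep_single} $\ge$ LHS, I proceed by direct disintegration. Given any $(\Pi, \bar\Upsilon_\Pi, \psibar)$ with $\psibar \in \Amc(\bar\Upsilon_\Pi)$, the variable $\Xbar$ is $\Fmcbar_0$-measurable and hence independent of $\Wbar$; for $\Pi$-a.e.\ $x$ the section $(s,\omegabar) \mapsto \psibar(s,\omegabar)$ on $\{\Xbar = x\}$ yields an element of $\Amctil$ on the Brownian filtration. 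Applying \eqref{eq_kill:ftil(xbd)} pointwise in $x$ and integrating against $\Pi$ gives
$$\Ebfbar \left[\half \int_0^T \|\psibar(s)\|^2 \, ds + f\!\left(\Wbar + \int_0^\cdot \psibar(s)\,ds,\, \Xbar\right)\right] \ge \int_\Smb \ftil(x)\, \Pi(dx).$$
Adding $R(\Pi\|\rho)$ and taking infima over $(\Pi, \bar\Upsilon_\Pi, \psibar)$ recovers \eqref{eq_kill:rep_start}, which equals LHS.

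For the reverse inequality, which is the substantive direction, I reduce in three stages. \emph{Stage 1 (truncation):} by replacing a generic $\psibar \in \Amc(\bar\Upsilon_\Pi)$ with the predictable stopped control $\psibar_N(s) \doteq \psibar(s)\one_{\{\int_0^s\|\psibar(u)\|^2 du \le N\}}$ and invoking monotone/bounded convergence, one checks $\Lambda(\Amc^N(\bar\Upsilon_\Pi), f) \downarrow \Lambda(\Amc(\bar\Upsilon_\Pi), f)$ as $N \to \infty$, so it suffices to control $\Lambda(\Amc^N(\bar\Upsilon_\Pi), f)$ for large $N$. \emph{Stage 2 ($x$-simple functions):} assume first that $f(y,x) = \sum_{i=1}^M g_i(y)\one_{A_i}(x)$ with $\{A_i\}$ a finite Borel partition of $\Smb$ and $g_i \in \Mmb_b(\Cmc)$. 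By \eqref{eq_kill:ftil(xbd)} pick, for each $i$, a control $\psitil_i \in \Amctil$ that is $\eps$-optimal for $g_i$ and truncated (via the same stopping device) to lie in $\Amctil^{N}$ for some $N$. Taking the canonical $\Omegabar \doteq \Cmc \times \Smb$ with $\Pbdbar \doteq \Pbdtil \times \Pi$, $\Wbar(\omegatil,x) \doteq \omegatil$, $\Xbar(\omegatil,x) \doteq x$, set
$$\psibar(s, \omegatil, x) \doteq \sum_{i=1}^M \psitil_i(s, \omegatil)\,\one_{A_i}(x).$$
Since $\one_{A_i}(\Xbar)$ is $\Fmcbar_0$-measurable and each $\psitil_i$ is $\{\Fmctil_t\}$-progressive, $\psibar$ is $\{\Fmcbar_t\}$-progressive and lies in $\Amc^N(\bar\Upsilon_\Pi)$. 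Fubini then gives
$$\Ebfbar\!\left[\half\!\int_0^T\!\|\psibar\|^2 ds + f\!\left(\Wbar + \!\int_0^\cdot\!\psibar\, ds, \Xbar\right)\right] = \sum_i \Pi(A_i)\, \Ebftil\!\left[\half\!\int_0^T\!\|\psitil_i\|^2 ds + g_i\!\left(\Wtil + \!\int_0^\cdot\!\psitil_i\, ds\right)\right] \le \int_\Smb \ftil(x)\,\Pi(dx) + \eps,$$
since $\ftil(x) = -\log \Ebftil\, e^{-g_i(\Wtil)}$ on $A_i$. \emph{Stage 3 (approximation):} for general $f \in \Mmb_b(\Cmc \times \Smb)$, approximate by $x$-simple $f_n$ with $f_n \to f$ a.s.\ $\Pbdtil \times \Pi$; this is done by combining a Lusin-type approximation of $f$ by bounded continuous functions with a refining sequence of finite Borel partitions of the Polish space $\Smb$ (replacing $f$ by its value at representatives in each cell). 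By Lemma~\ref{lem_kill:continuity of repn}, $\Lambda(\Amc^N(\bar\Upsilon_\Pi), f_n) \to \Lambda(\Amc^N(\bar\Upsilon_\Pi), f)$; and $\ftil_n(x) \to \ftil(x)$ for $\Pi$-a.e.\ $x$ (by DCT applied to $e^{-f_n(\Wtil,x)}$), with $\|\ftil_n\|_\infty \le \|f\|_\infty$, so $\int \ftil_n d\Pi \to \int \ftil d\Pi$. Combining the three stages gives $\Lambda(\Amc(\bar\Upsilon_\Pi), f) \le \int \ftil d\Pi + \eps$, and combined with an $\eps$-optimal choice of $\Pi$ in \eqref{eq_kill:rep_start}, this yields RHS of \eqref{eq_kill:rep_single} $\le$ LHS $+\,2\eps$, completing the proof.

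The main obstacle is the interplay between $\eps$-optimality of the atomic controls $\psitil_i$ and the a.s.\ bound required to stay in $\Amc^N$, which Lemma~\ref{lem_kill:continuity of repn} demands: an $\eps$-optimal $\psitil_i$ for $g_i$ is only $L^2$-bounded on average. The stopping-time truncation of Stage 1 resolves this by allowing one to work inside $\Amc^N$ throughout, paying only an error vanishing as $N \to \infty$. A secondary technical point is the measurability of the glued control $\psibar$ on the product space, which is precisely the reason for reducing to $x$-simple functions in Stage 2, as the finite partition structure yields progressivity without any deeper measurable selection theorem; the general case is then reached by the approximation-in-$f$ argument of Stage 3.
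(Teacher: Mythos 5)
Your proposal reaches the correct conclusion and your easy direction coincides with the paper's, but the hard direction takes a genuinely different route worth comparing. The paper first uses Lemma~\ref{lem_kill:continuity of repn} to reduce to continuous $f$ and then, for continuous $f$, constructs the near-optimal control $\psibar_\eps$ by a compactness argument: tightness of the controlled paths $\Wtil^{x,\eps}$ yields a compact $K_\eps \subset \Cmc$, a compact $\Ktil_\eps \subset \Smb$ is chosen with $\Pi(\Smb \setminus \Ktil_\eps)$ small, and uniform continuity of $f$ and $\ftil$ on $K_\eps \times \Ktil_\eps$ produces the finite partition $\{B_i\}$ over which the controls $\psitil_{y_i,\eps}$ are glued. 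Your approach instead proves the bound for $x$-simple $f = \sum_i g_i(y)\one_{A_i}(x)$, where the gluing of controls is trivial and no tightness/uniform-continuity is needed, and then pushes to general $f$ by a second application of Lemma~\ref{lem_kill:continuity of repn}. This is a legitimate trade: you avoid the compactness step at the cost of needing an a.s.\ approximation of general $f \in \Mmb_b(\Cmc\times\Smb)$ by $x$-simple functions.

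Two places are thinner than the paper and deserve a comment. First, your Stage~3 approximation of $f$ by $x$-simple functions, via ``Lusin plus refining partitions,'' is only sketched and, as written, would require a diagonal/Egorov argument: continuity of the Lusin approximant gives $x$-pointwise convergence on compacts, not directly a single $\nu$-a.s.\ convergent sequence with uniform bound. A cleaner route, fully in the spirit of your argument, is to take $\Gmc_m$ a refining sequence of finite sub-$\sigma$-fields of $\Bmc(\Smb)$ generating $\Bmc(\Smb)$ and set $f_m \doteq \Ebf^{\Pbdtil\times\Pi}[f \mid \Bmc(\Cmc)\otimes\Gmc_m]$; this is $x$-simple with $\|f_m\|_\infty \le \|f\|_\infty$, and the bounded martingale convergence theorem gives $f_m \to f$ a.s.\ $\Pbdtil\times\Pi$ directly, with no Lusin step. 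Second, for the easy direction your phrase ``the section of $\psibar$ on $\{\Xbar = x\}$ yields an element of $\Amctil$'' presupposes that $\Omegabar$ is a product space; for a general $\bar\Upsilon_\Pi$ you must first invoke the Stroock--Varadhan functional representation (as the paper does) to write $\psibar(s,\omegabar) = F(s, \Xbar(\omegabar), \Wbar_{s\wedge\cdot}(\omegabar))$ before you can set $\psitil_x(\cdot) = F(\cdot, x, \Wtil_{\cdot\wedge\cdot})$ and appeal to \eqref{eq_kill:ftil(xbd)}. With those two points tightened, your proof is complete and constitutes a correct, somewhat more elementary alternative to the paper's compactness argument.
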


\begin{proof}
	In view of \eqref{eq_kill:rep_complex}, it suffices to prove that for every $\Pi \in \Pmc(\Smb)$ such that $R(\Pi \| \rho) < \infty$,
	\begin{equation}
		\label{eq_kill:rep_single_key}
		\int_{\Smb} \ftil(x) \Pi(dx) = \inf_{\psibar \in \Amc(\bar \Upsilon_{\Pi})} \Ebfbar \left[ \half \int_0^T \|\psibar(s)\|^2 \, ds + f\left(\Wbar + \int_0^\cdot \psibar(s) \, ds, \Xbar \right) \right],
	\end{equation}	
	where $\ftil$ is defined in \eqref{eq_kill:ftil(xbd)_def}.
	We claim that it suffices to prove \eqref{eq_kill:rep_single_key} for $f \in \Cmb_b(\Cmc \times \Smb)$.
	Assume for now that the claim holds and
	so we assume that $f \in \Cmb_b(\Cmc \times \Smb)$.
	We first argue that LHS $\le$ RHS in \eqref{eq_kill:rep_single_key}.
	Fix $\psibar \in \Amc(\bar \Upsilon_{\Pi})$.
	Since $\psibar$ is $\Fmcbar_t$-progressively measurable, there exists (cf.~\cite[Exercise 1.5.6]{StroockVaradhan2007multidimensional}) a $(\Bmc([0,T]) \times \Bmc(\Smb) \times \Bmc(\Cmc))/\Bmc(\Rd)$-measurable map $F \colon [0,T] \times \Smb \times \Cmc \to \Rd$ such that
	\begin{equation*}
		\psibar(s,\omegabar) = F(s, \Xbar(\omegabar), \Wbar_{s \wedge \cdot}(\omegabar)), \quad \Pbdbar\mbox{-a.s. }(s,\omegabar) \in [0,T] \times \Omegabar.
	\end{equation*}
	Recall the Wiener space $(\Omegatil,\Fmctil,\Pbdtil)$ introduced in Section \ref{sec_kill:var_rep_general}. Define a collection of processes $\{\psitil_x\}_{x \in \Smb}$ on $\Omegatil$ as
	\begin{equation*}
		\psitil_x(s,\omegatil) \doteq F(s, x, \Wtil_{s \wedge \cdot}(\omegatil)), \quad (x,s,\omegatil) \in \Smb \times [0,T] \times \Omegatil.
	\end{equation*}
	Then $\psitil_x \in \Amctil$ for $\Pi$-a.e.\ $x \in \Smb$ where $\Amctil$ was defined below \eqref{eq_kill:rep_complex}.
	By independence between $\Xbar$ and $\Wbar$, 
	\begin{align*}
		& \Ebfbar \left[ \half \int_0^T \|\psibar(s)\|^2 \, ds + f\left(\Wbar + \int_0^\cdot \psibar(s) \, ds, \Xbar \right) \right] \\
		& = \int_\Smb \Ebfbar \left[ \half \int_0^T \|F(s, x, \Wbar_{s \wedge \cdot})\|^2 \, ds + f\left(\Wbar + \int_0^\cdot F(s, x, \Wbar_{s \wedge \cdot}) \, ds, x \right) \right] \Pi(dx) \\
		& = \int_\Smb \Ebftil \left[ \half \int_0^T \|\psitil_x(s)\|^2 \, ds + f\left(\Wtil + \int_0^\cdot \psitil_x(s) \, ds, x \right) \right] \Pi(dx) \\
		& \ge \int_{\Smb} \ftil(x) \Pi(dx),
	\end{align*}
	where the last inequality is from \eqref{eq_kill:ftil(xbd)}.
	Taking the infimum over all $\psibar \in \Amc(\bar \Upsilon_{\Pi})$ in the above inequality implies that LHS $\le$ RHS in \eqref{eq_kill:rep_single_key}.	
	
	We now consider the reverse inequality.
	Fix $\eps \in (0,1)$.
	For each $x \in \Smb$, let $\psitil_{x,\eps} \in \Amctil$ be a control in \eqref{eq_kill:ftil(xbd)} such that
	\begin{equation}
		\label{eq_kill:ftil_eps}
		\ftil(x) \ge \Ebftil \left[ \half \int_0^T \|\psitil_{x,\eps}(s)\|^2 \, ds + f\left(\Wtil + \int_0^\cdot \psitil_{x,\eps}(s) \, ds, x \right) \right] - \eps.
	\end{equation}
	We will now carefully select a countable sub-collection from $\{\psitil_{x,\eps}\}_{x \in \Smb}$ and use it to construct an $\Fmcbar_t$-progressively measurable process $\psibar_\eps \in \Amc(\bar \Upsilon_{\Pi})$.
	From \eqref{eq_kill:ftil_eps} we have
	\begin{equation*}
		\sup_{x \in \Smb} \Ebftil \left[ \half \int_0^T \|\psitil_{x,\eps}(s)\|^2 \, ds \right] \le 2 \|f\|_\infty + 1. 
	\end{equation*}
	Using this it is easy to check that $\{\int_0^\cdot \psitil_{x,\eps}(s)\,ds\}_{x \in \Smb}$ is tight in $\Cmc$ and thus so is $\{\Wtil^{x,\eps} \doteq \Wtil + \int_0^\cdot \psitil_{x,\eps}(s) \, ds\}_{x \in \Smb}$.
	Then there exists a compact  $K_\eps \subset \Cmc$ such that
	\begin{equation}
		\label{eq_kill:K_eps}
		\sup_{x \in \Smb} \Pbdtil\left(\Wtil^{x,\eps} \notin K_\eps\right) \le \eps/\|f\|_\infty.
	\end{equation}
	Let $\Ktil_\eps$ be a compact subset of $\Smb$ such that $$\Pi(\Smb \,\backslash \Ktil_\eps) \le \eps/\|f\|_\infty.$$
	Since $f$ is continuous, so is $\ftil$.
	In particular, $f$ is uniformly continuous on $K_\eps \times \Ktil_\eps$ and $\ftil$ is uniformly continuous on $\Ktil_\eps$.	
	So there exists some $M_\eps \in \Nmb$ and a finite partition $\{B_i : i = 1,\dotsc,M_\eps\}$ of $\Ktil_\eps$ such that
	\begin{equation}
		\label{eq_kill:uniform_cont}
		\max_{i=1,\dotsc,M_\eps} \sup_{\phi \in K_\eps, x_1,x_2 \in B_i} |f(\phi,x_1) - f(\phi,x_2)| + |\ftil(x_1) - \ftil(x_2)| \le \eps.
	\end{equation}
	Now for each $i=1,\dotsc,M_\eps$, fix a $y_i \in B_i$ and define
	\begin{equation*}
		\psibar_\eps(s,\omegabar) \doteq
		\begin{cases}
			\psitil_{y_i,\eps}(s,\Wbar(\omegabar)), & \omegabar \in \Xbar^{-1}(B_i), \\
			\hfil 0, & \omegabar \notin \Xbar^{-1}(\Ktil_\eps).
		\end{cases}
	\end{equation*}
	Since $\psitil_{y_i,\eps}$ is $\Fmctil_t$-progressively measurable for each $i = 1,\dotsc,M_\eps$ and $\Xbar$ is $\Fmcbar_0$-measurable, $\psibar_\eps$ is $\Fmcbar_t$-progressively measurable.
	For $x \in B_i$, $i \in \{1,\dotsc,M_\eps\}$, we have
	\begin{align*}
		\Ebfbar f\left(\Wbar + \int_0^\cdot \psitil_{y_i,\eps}(s,\Wbar) \, ds, x \right) & = \Ebftil f\left(\Wtil^{y_i,\eps} ,x\right) \\
		& \le \Ebftil \left[ f\left(\Wtil^{y_i,\eps}, x\right) \one_{\{\Wtil^{y_i,\eps} \in K_\eps \}} \right] + \eps \\
		& \le \Ebftil \left[ f\left(\Wtil^{y_i,\eps}, y_i\right) \one_{\{\Wtil^{y_i,\eps} \in K_\eps \}} \right] + 2\eps \\
		& \le \Ebftil f\left(\Wtil^{y_i,\eps}, y_i\right) + 3\eps,		
	\end{align*}
	where the first and third inequalities use \eqref{eq_kill:K_eps}, and the second inequality follows from \eqref{eq_kill:uniform_cont}.	
	Using this, the definition of $\Ktil_\eps$, and the independence between $\Wbar$ and $\Xbar$, we have
	\begin{align*}
		& \Ebfbar f\left(\Wbar + \int_0^\cdot \psibar_\eps(s) \, ds, \Xbar \right) \\
		& \:\:\:\:\:\: = \sum_{i=1}^{M_\eps} \Ebfbar \left[ \one_{\{\Xbar \in B_i\}} f\left(\Wbar + \int_0^\cdot \psibar_\eps(s) \, ds, \Xbar \right) \right] + \Ebfbar \left[ \one_{\{\Xbar \notin \Ktil_\eps\}} f\left(\Wbar + \int_0^\cdot \psibar_\eps(s) \, ds, \Xbar \right) \right] \\
		& \:\:\:\:\:\: \le \sum_{i=1}^{M_\eps} \int_{B_i} \Ebfbar f\left(\Wbar + \int_0^\cdot \psitil_{y_i,\eps}(s,\Wbar) \, ds, x \right) \Pi(dx) + \eps \\
		& \:\:\:\:\:\: \le \sum_{i=1}^{M_\eps} \int_{B_i} \Ebftil f\left(\Wtil^{y_i,\eps}, y_i\right) \Pi(dx) + 4\eps.
	\end{align*}
	Also note that 
	\begin{align*}
		\Ebfbar \int_0^T \|\psibar_\eps(s)\|^2 \, ds & = \sum_{i=1}^{M_\eps} \Ebfbar \left[ \one_{\{\Xbar \in B_i\}} \int_0^T \|\psitil_{y_i,\eps}(s,\Wbar)\|^2 \, ds \right] \\
		& = \sum_{i=1}^{M_\eps} \int_{B_i} \Ebftil \int_0^T \|\psitil_{y_i,\eps}(s)\|^2 \, ds \, \Pi(dx).
	\end{align*}	
	Combining above two displays, we have the following estimate on the RHS in \eqref{eq_kill:rep_single_key}
	\begin{align*}
		& \Ebfbar \left[ \half \int_0^T \|\psibar_\eps(s)\|^2 \, ds + f\left(\Wbar + \int_0^\cdot \psibar_\eps(s) \, ds, \Xbar \right) \right] \\
		& \quad \le \sum_{i=1}^{M_\eps} \int_{B_i} \Ebftil \left[ \half \int_0^T \|\psitil_{y_i,\eps}(s)\|^2 \, ds + f\left(\Wtil^{y_i,\eps}, y_i\right) \right] \Pi(dx) + 4\eps.
	\end{align*}	
	It then follows from \eqref{eq_kill:ftil_eps}, \eqref{eq_kill:uniform_cont} and definition of $\Ktil_\eps$ that the above display can be bounded above by
	\begin{equation*}
		\sum_{i=1}^{M_\eps} \int_{B_i} \ftil(y_i) \, \Pi(dx) + 5\eps \le \sum_{i=1}^{M_\eps} \int_{B_i} \ftil(x) \, \Pi(dx) + 6\eps \le \int_\Smb \ftil(x) \, \Pi(dx) + 7\eps.
	\end{equation*}
	Since $\eps > 0$ is arbitrary we have RHS $\le$ LHS in \eqref{eq_kill:rep_single_key} and this completes the proof.	
	
	It remains to prove the claim that in proving \eqref{eq_kill:rep_single_key} we can assume that $f \in \Cmb_b(\Cmc \times \Smb)$. The proof of this is similar to arguments in \cite{BoueDupuis1998variational, BudhirajaDupuis2000variational} but we provide the details.
	First note that from   \cite[Theorem V.16a]{Doob1994measure} we can find $\{f_n\} \subset \Cmb_b(\Cmc \times \Smb)$ such that $\|f_n\|_\infty \le \|f\|_\infty$ and $f_n$ converges to $f$ a.s.\ $\Pbdtil \times \Pi$.
	It then follows from dominated convergence theorem that as $n \to \infty$, $\ftil_n \to \ftil$ a.s.\ $\Pi$ and hence
	\begin{equation*}
		\int_{\Smb} \ftil_n(x) \Pi(dx) \to \int_{\Smb} \ftil(x) \Pi(dx).
	\end{equation*}
	To prove the claim, it then remains to show that $\Lambda(\Amc(\bar \Upsilon_{\Pi}),f_n) \to \Lambda(\Amc(\bar \Upsilon_{\Pi}),f)$ as $n \to \infty$, where $\Lambda$ is as in \eqref{eq_kill:Lambda_Pi}.
	Let 
	\begin{equation*}
		\Amc_f(\bar \Upsilon_{\Pi}) \doteq \left\{\psibar \in \Amc(\bar \Upsilon_{\Pi}): \Ebfbar \int_0^T \|\psibar(s)\|^2 \, ds \le 4\|f\|_\infty \right\}.
	\end{equation*}
	From the definition of $\Lambda$ in \eqref{eq_kill:Lambda_Pi} we see that for any bounded $f$, $-\|f\|_\infty \le \Lambda(\mathscr{A}(\bar \Upsilon_\Pi),f) \le \|f\|_\infty$. 
	For any $\bar{\psi}$ such that $\bar{\bold{E}} \int_0^T \|\bar{\psi}(s)\|^2 ds > 4\|f\|_\infty$, the right hand side of \eqref{eq_kill:Lambda_Pi} (with $g$ replaced by $f$) will be strictly larger than $\|f\|_\infty$, which means that 
	such $\bar{\psi}$ does not do better than a ${\psi}$ in the class $\mathscr{A}_f(\bar \Upsilon_\Pi)$.
	Thus 
	 $$\Lambda(\Amc(\bar \Upsilon_{\Pi}),f_n) = \Lambda(\Amc_f(\bar \Upsilon_{\Pi}),f_n) \mbox{ and } \Lambda(\Amc(\bar \Upsilon_{\Pi}),f) = \Lambda(\Amc_f(\bar \Upsilon_{\Pi}),f).$$
	Fix $\eps \in (0,1)$ and choose $N > 0$ such that $4\|f\|_\infty^2/N \le \eps/2$.
	Fix $\psibar \in \Amc_f(\bar \Upsilon_{\Pi})$ and define the stopping time
	\begin{equation*}
		\tau_N(\omegabar) \doteq \inf \left\{t \in [0,T] : \int_0^t \|\psibar(s,\omegabar)\|^2 \, ds \ge N \right\} \wedge T, \; \omegabar \in \Omegabar.
	\end{equation*}
	Let $\psibar_N(s) \doteq \psibar(s) \one_{[0,\tau_N]}(s)$.
	Then $\psibar_N \in \Amc^N(\bar \Upsilon_{\Pi})$, where $\Amc^N(\bar \Upsilon_{\Pi})$ is defined in \eqref{eq_kill:Amcbar^N}, and 
	\begin{equation*}
		\Pbdbar(\psibar_N \ne \psibar) \le \Pbdbar(\tau_N < T) \le \Pbdbar \left( \int_0^T \|\psibar(s)\|^2 \, ds \ge N \right) \le 4\|f\|_\infty/N.
	\end{equation*}	
	Using the above estimate, we have
	\begin{align*}
		\Lambda(\Amc^N(\bar \Upsilon_{\Pi}),f_n) & \le \Ebfbar \left[ \half \int_0^T \|\psibar_N(s)\|^2 \, ds + f_n\left(\Wbar + \int_0^\cdot \psibar_N(s) \, ds, \Xbar \right) \right] \\
		& \le \Ebfbar \left[ \half \int_0^T \|\psibar(s)\|^2 \, ds + f_n\left(\Wbar + \int_0^\cdot \psibar(s) \, ds, \Xbar \right) \right] + \eps.
	\end{align*}
	Taking the infimum over all $\psibar \in \Amc_f(\bar \Upsilon_{\Pi})$ in the above inequality we have
	\begin{equation*}
		\Lambda(\Amc_f(\bar \Upsilon_{\Pi}),f_n) = \Lambda(\Amc(\bar \Upsilon_{\Pi}),f_n) \le \Lambda(\Amc^N(\bar \Upsilon_{\Pi}),f_n) \le \Lambda(\Amc_f(\bar \Upsilon_{\Pi}),f_n) + \eps.
	\end{equation*}
	Exactly the same argument with $f_n$ replaced by $f$ gives
	\begin{equation*}
		\Lambda(\Amc_f(\bar \Upsilon_{\Pi}),f) \le \Lambda(\Amc^N(\bar \Upsilon_{\Pi}),f) \le \Lambda(\Amc_f(\bar \Upsilon_{\Pi}),f) + \eps.
	\end{equation*}	
	From Lemma~\ref{lem_kill:continuity of repn} we have that as $n \to \infty$, $$\Lambda(\Amc^N(\bar \Upsilon_{\Pi}),f_n) \to \Lambda(\Amc^N(\bar \Upsilon_{\Pi}),f).$$
	This proves the claim since $\eps > 0$ is arbitrary.
\end{proof}

Finally we now give the proof of Proposition \ref{prop_kill:rep_general} which extends the result in Lemma \ref{lem_kill:rep_single} to a setting with a general filtration. 

{\bf Proof of Proposition \ref{prop_kill:rep_general}.}
	Fix $\Pi \in \Pmc(\Smb)$ such that $R(\Pi \| \rho) < \infty$. Let
	$\Upsilon_{\Pi} \doteq (\Omegabar,\Fmcbar,\{\Fmchat_t\},\Pbdbar)$  be as in the statement of the proposition and let
	$\bar\Upsilon_{\Pi} \doteq (\Omegabar,\Fmcbar,\{\Fmcbar_t\},\Pbdbar)$.
	It suffices to prove that 
	\begin{align} 
		& \inf_{\psibar \in \Amc(\bar \Upsilon_{\Pi})} \Ebfbar \left[ \half \int_0^T \|\psibar(s)\|^2 \, ds + f\left(\Wbar + \int_0^\cdot \psibar(s) \, ds, \Xbar \right) \right] \notag \\
		& \qquad = \inf_{\psihat \in \Amc(\Upsilon_{\Pi})} \Ebfbar \left[ \half \int_0^T \|\psihat(s)\|^2 \, ds + f\left(\Wbar + \int_0^\cdot \psihat(s) \, ds, \Xbar \right) \right]. \label{eq_kill:rep_general_1}
	\end{align}
	We first claim that it suffices to prove \eqref{eq_kill:rep_general_1} for $f \in \Cmb_b(\Cmc \times \Smb)$.
	The verification of this claim follows along the same lines as the proof of the claim in the proof of Lemma~\ref{lem_kill:rep_single}, and hence we only provide a sketch here.
	With $\{f_n\} \subset \Cmb_b(\Cmc \times \Smb)$ introduced as below \eqref{eq_kill:rep_single_key}, it was shown in proof of Lemma~\ref{lem_kill:rep_single} that $\Lambda(\Amc(\bar \Upsilon_{\Pi}),f_n) \to \Lambda(\Amc(\bar \Upsilon_{\Pi}),f)$.
	Thus it suffices to show $\Lambda(\Amc(\Upsilon_{\Pi}),f_n) \to \Lambda(\Amc(\Upsilon_{\Pi}),f)$ as $n \to \infty$, where $\Lambda(\Amc(\Upsilon_{\Pi}),g)$ is defined through \eqref{eq_kill:Lambda_Pi}.
	For each $N < \infty$, define $\Amc^N(\Upsilon_{\Pi})$ as in \eqref{eq_kill:Amcbar^N} with $\Amc(\bar \Upsilon_{\Pi})$ replaced by $\Amc(\Upsilon_{\Pi})$.
	Then by the same stopping time argument below \eqref{eq_kill:rep_single_key}, it suffices to argue that for each $N < \infty$, $\Lambda(\Amc^N(\Upsilon_{\Pi}),f_n) \to \Lambda(\Amc^N(\Upsilon_{\Pi}),f)$ as $n \to \infty$. However this holds by Remark \ref{Rk9.1}.	
	This completes the proof of the claim.

	Henceforth we will assume that $f \in \Cmb_b(\Cmc \times \Smb)$.
	It is clear that LHS $\ge$ RHS in \eqref{eq_kill:rep_general_1}.	
	For the reverse inequality, we will show that
	\begin{align}
		& \inf_{\psibar \in \Amc(\bar \Upsilon_{\Pi})} \Ebfbar \left[ \half \int_0^T \|\psibar(s)\|^2 \, ds + f\left(\Wbar + \int_0^\cdot \psibar(s) \, ds, \Xbar \right) \right] \notag \\
		& \qquad \le \Ebfbar \left[ \half \int_0^T \|\psihat(s)\|^2 \, ds + f\left(\Wbar + \int_0^\cdot \psihat(s) \, ds, \Xbar \right) \right] \label{eq_kill:rep_general_2}
	\end{align}
	for each $\psihat \in \Amc(\Upsilon_{\Pi})$.
	The proof is similar to that of   \cite[Lemma 3.5]{BudhirajaDupuis2000variational} and we only give a sketch.
	We will proceed in two steps.
	
	\textit{Step $1.\ $Simple $\psihat$.} 
	For simplicity we consider the case where $$\psihat(s) = Y \one_{[t,T]}(s),$$ where $t \in [0,T]$, $Y$ is $\Fmchat_t$-measurable, and $\|Y\| \le N < \infty$ a.s.
	The proof for a general simple process is similar.
	Consider the map $\varrho: \Cmb([0,t]:\Rd) \times \Smb \times K_N \to \Rmb$, where $K_N \doteq \{ z \in \Rd : \|z\| \le N\}$, defined as
	\begin{equation*}
		\varrho(\phi,x,y) \doteq \Ebf \left[ \frac{T-t}{2} \|y\|^2 + f\left(\phi^B + \int_0^\cdot y \one_{[t,T]}(s) \, ds, x\right) \right],
	\end{equation*}
	where 
	\begin{equation*}
		\phi^B(s) \doteq
		\begin{cases}
			\phi(s), & s \in [0,t], \\
			\phi(t) + B(s-t), & s \in [t,T].
		\end{cases} 
	\end{equation*}
	Note that $\varrho$ is bounded, and that by the dominated convergence theorem it is also continuous in $(\phi,x,y)$.
	From a classical measurable selection result (see e.g.~\cite[Corollary 10.3]{ethkur}) there exists a Borel measurable function $\varrho_1 \colon \Cmb([0,t]:\Rd) \times \Smb \to K_N$ such that $$\varrho(\phi,x,\varrho_1(\phi,x)) \le \varrho(\phi,x,y)$$ for all $(\phi,x) \in \Cmb([0,t]:\Rd) \times \Smb$ and $y \in K_N$.
	With the definition $\Wbar_{[0,t]} \doteq \{\Wbar(s)\}_{0 \le s \le t} \in \Cmb([0,t]:\Rd)$ and $\Ybar \doteq \varrho_1(\Wbar_{[0,t]}, \Xbar)$, we set $$\psibar(s) \doteq \Ybar \one_{[t,T]}(s) \in \Amc(\bar \Upsilon_{\Pi}).$$
	Then
	\begin{align*}
		& \Ebfbar \left[ \half \int_0^T \|\psihat(s)\|^2 \, ds + f\left(\Wbar + \int_0^\cdot \psihat(s)\,ds, \Xbar\right) \right] \\
		& = \Ebfbar \left\{ \Ebfbar \left[ \frac{T-t}{2} \|Y\|^2 + f\left(\Wbar + \int_0^\cdot Y \one_{[t,T]}(s) \,ds, \Xbar \right) \right] \Big|\, \Fmchat_t \right\} \\
		& = \Ebfbar \varrho(\Wbar_{[0,t]},\Xbar,Y) \\
		& \ge \Ebfbar \varrho(\Wbar_{[0,t]},\Xbar,\varrho_1(\Wbar_{[0,t]}, \Xbar)) \\
		& = \Ebfbar \varrho(\Wbar_{[0,t]},\Xbar,\Ybar) \\
		& = \Ebfbar \left\{ \Ebfbar \left[ \frac{T-t}{2} \|\Ybar\|^2 + f\left(\Wbar + \int_0^\cdot \Ybar \one_{[t,T]}(s) \,ds, \Xbar \right) \right] \Big|\, \Fmchat_t \right\} \\
		& = \Ebfbar \left[ \half \int_0^T \|\psibar(s)\|^2 \, ds + f\left(\Wbar + \int_0^\cdot \psibar(s)\,ds, \Xbar\right) \right].
	\end{align*}
	So \eqref{eq_kill:rep_general_2} holds for simple $\psihat \in \Amc(\Upsilon_{\Pi})$.	
	
	\textit{Step $2.\ $General $\psihat$.}
	Next consider an arbitrary $\psihat \in \Amc(\Upsilon_{\Pi})$.
	We can assume without loss of generality that $\Ebfbar \int_0^T \|\psihat(s)\|^2 \, ds < \infty$.
	Then (cf.~\cite[Lemma 3.2.4]{KaratzasShreve1991brownian}) there exists a sequence of simple processes $\{\psihat_n\}_{n \in \Nmb} \subset \Amc(\Upsilon_{\Pi})$ such that
	\begin{equation*}
		\lim_{n \to \infty} \Ebfbar \int_0^T \|\psihat_n(s) - \psihat(s)\|^2 \, ds = 0.
	\end{equation*}
	This implies that as $n \to \infty$,
	\begin{equation*}
		\Ebfbar \sup_{0 \le t \le T} \left\| \int_0^t \psihat_n(s) \, ds - \int_0^t \psihat(s) \, ds \right\|^2 \le T \Ebfbar \int_0^T \|\psihat_n(s) - \psihat(s)\|^2 \, ds \to 0.
	\end{equation*}	
	Combining above displays and using Step $1$, we have	
	\begin{align*}
		& \inf_{\psibar \in \Amc(\bar \Upsilon_{\Pi})} \Ebfbar \left[ \half \int_0^T \|\psibar(s)\|^2 \, ds + f\left(\Wbar + \int_0^\cdot \psibar(s)\,ds, \Xbar\right) \right] \\
		& \le \Ebfbar \left[ \half \int_0^T \|\psihat_n(s)\|^2 \, ds + f\left(\Wbar + \int_0^\cdot \psihat_n(s)\,ds, \Xbar\right) \right] \\
		& \to \Ebfbar \left[ \half \int_0^T \|\psihat(s)\|^2 \, ds + f\left(\Wbar + \int_0^\cdot \psihat(s) \, ds, \Xbar \right) \right],
	\end{align*}
	as $n \to \infty$, where the convergence holds since $f \in \Cmb_b(\Cmc \times \Smb)$.
	Thus \eqref{eq_kill:rep_general_2} holds for general $\psihat \in \Amc(\Upsilon_{\Pi})$ and this completes the proof.
\hfill \qed


\appendix

\section{Proof of Theorem \ref{thm_kill:LLN}}
\label{sec_kill:append}

Let
	$\Gamma^n \doteq \frac{1}{n} \sum_{i=1}^n \delta_{(B_i, X_i)}$.
Define $Q^n$ and $\mutil^n$ as in \eqref{eq_kill:Q^n_general} and \eqref{eq_kill:mutil^n} on $\Upsilon = (\Omega, \Fmc, \{\Fmc_t\}, \Pbd)$, with $\psibd_i^n \equiv 0$, $\Pi^n = \theta^{\otimes n}$, and $\Sbd_i^n = X_i$.
It is then clear that $\Lmc(\Gamma^n, \mu^n) = \Lmc(Q^n_{(1,4)}, \mutil^n)$.
Note that the bound in \eqref{eq_kill:tightness_assumption} holds trivially in this case.
It follows from Lemmas~\ref{lem_kill:tightness_1}--\ref{lem_kill:char_limit_2} that
	$\{(Q^n,\mutil^n)\}_{n \in \Nmb}$ is tight in $\Pmc(\Xi) \times \Dmc$, and any weak limit point $(Q^*,\mutil^*)$ is almost surely such that $Q^*_{(1,4)}=\mu_0 \otimes \theta$,  and $\mutil^* = \varpi_{Q^*}$, namely
	\begin{equation*}
		\lan f,\,\mutil^*(t)\ran = \Ebf^{Q^*} \left[ f(\bbdtil_t) \one_{\left\{\sigmabdtil > \int_0^t \lan\zeta,\,\mutil^*(s)\ran\,ds\right\}}\right], \quad \forall f \in \Cmb_b(\Rd), t \in [0,T].
	\end{equation*}
Recall $\mu(t), a(t), b(t)$ defined in Section \ref{sec_kill:model}.
From \eqref{eq_kill:ODE}  $a(t) = \exp \left\{ -\int_0^t a(s)b(s) \, ds \right\}$.
So for $f \in \Cmb_b(\Rd)$ and $t \in [0,T]$,
\begin{align*}
	\Ebf^{Q^*} \left[ f(\bbdtil_t) \one_{\left\{\sigmabdtil > \int_0^t \lan\zeta,\,\mu(s)\ran\,ds\right\}}\right] & = \lan f, \mu_0(t) \ran \exp \left\{ -\int_0^t \lan\zeta,\,\mu(s)\ran\,ds \right\} \\
	& = \lan f, \mu_0(t) \ran \exp \left\{ -\int_0^t a(s)b(s) \, ds \right\} \\
	& = a(t) \lan f, \mu_0(t) \ran \\
	& = \lan f,\,\mu(t)\ran .
\end{align*}
So from Lemma~\ref{lem_kill:uniqueness}, we must have $\mu = \varpi_{Q^*} = \mutil^*$.
This implies that $\mu^n \Rightarrow \mu$ as $n \to \infty$.
The result follows. 
\qed

%

\newpage
{\sc
\bigskip
\noi
A. Budhiraja\\
Department of Statistics and Operations Research\\
University of North Carolina\\
Chapel Hill, NC 27599, USA\\
email: budhiraj@email.unc.edu
\skp

\noi
W-T. Fan\\
Department of Mathematics\\
University of Wisconsin\\
Madison, WI 53705, USA\\
email: louisfan@math.wisc.edu
\skp

\noi
R. Wu\\
Division of Applied Mathematics\\
Brown University\\
Providence, RI 02912, USA\\
email: ruoyu\_wu@brown.edu

}

\end{document}